\documentclass[10pt]{amsart}
	\usepackage{amsmath, amsfonts, amssymb,mathrsfs,color}
	\usepackage{graphicx}
	\usepackage{tikz-cd}
	\usepackage{tensor}
	\usepackage{xcolor}
	\usepackage{graphicx, xcolor}
	\usepackage{bm}	
	\usepackage{amsthm}	
	\usepackage{amsfonts}	
	\usepackage{amssymb}
	\usepackage{mathrsfs}
	\usepackage{latexsym}
	\usepackage{enumerate}
	\usepackage{dsfont}
	\usepackage{mathtools}
	\usepackage{bbm}
	\usepackage{enumitem}
	\usepackage{subfig}
	\usepackage{comment}
	\usepackage{cancel}
	\usepackage{hyperref}
	\usepackage{blkarray}

	\definecolor{light}{rgb}{0,1,1}

         \newcommand{\Id}{\ensuremath{\text{Id}}}
	\usepackage{todonotes}
	
	\newtheorem{theorem}{Theorem}[section]
	\newtheorem{lemma}[theorem]{Lemma}
	\newtheorem{corollary}[theorem]{Corollary}
	\newtheorem{proposition}[theorem]{Proposition}

	\theoremstyle{definition}
	\newtheorem{definition}[theorem]{Definition}
	\newtheorem{example}[theorem]{Example}

	\theoremstyle{remark}
	\newtheorem{remark}[theorem]{Remark}

	\numberwithin{equation}{section}

	\newcommand{\R}{\mathbb R}  
	\newcommand{\Z}{\mathbb Z}  
	\newcommand{\C}{\ensuremath{\mathbb{C}}}
	\newcommand{\T}{\ensuremath{\mathbb{T}}}

	\newcommand{\fk}{\ensuremath{\mathfrak{k}}}
	
	\newcommand{\fg}{\ensuremath{\mathfrak{g}}}

	\def\stackto #1 { \, {\stackrel{#1}{\longrightarrow}}\, }
	\def\stackTo #1 { \, {\stackrel{#1}{\Longrightarrow}}\, }

	\newcommand{\Lie}{\ensuremath{\text{Lie}}}
	\newcommand{\dolbeault}{\ensuremath{\overline{\partial}}}
	\newcommand{\zbar}{\ensuremath{\overline{z}}}
	\newcommand{\inn}[2]{\ensuremath{\left<{#1},{#2}\right>}}
	\newcommand{\ind}{\text{ind}}

	\newcommand{\Hom}{\ensuremath{\operatorname{Hom}}}

	\renewcommand{\to}{\ensuremath{\nobreak\rightarrow\nobreak}}

	\usepackage{mathabx,epsfig}

	\newcommand{\cq}[1]{\ensuremath{q}}

	\numberwithin{equation}{section}
	\numberwithin{figure}{section}

\begin{document}

\title{Equivariant Index on Toric Contact Manifolds}

\author{Pedram Hekmati}
\address[Pedram Hekmati]{University of Auckland, Department of Mathematics, 1010 Auckland, New Zealand}
\email{p.hekmati@auckland.ac.nz}
\author{Marcos Orseli}
\address[Marcos Orseli]{University of Auckland, Department of Mathematics, 1010 Auckland, New Zealand}
\email{mors019@aucklanduni.ac.nz}
\begin{abstract}     We compute the equivariant index of the twisted horizontal Dolbeault operator on compact toric contact manifolds of Reeb type. The  operator is  elliptic transverse to the Reeb foliation and  its  equivariant index  defines a distribution on the torus. Using the good cone condition, we show that the symbol localises to the closed Reeb orbits corresponding to the edges of the moment cone and obtain an Atiyah-Bott-Lefschetz type formula for the index. For the horizontal Dolbeault operator, we obtain an expression for the index as a sum over the lattice points of the moment cone, by applying an adaptation of the Lawrence-Varchenko polytope decomposition  to rational polyhedral cones. 
\end{abstract}

\maketitle

\section{Introduction}
	There has been considerable interest in $K$-contact and Sasaki manifolds in recent years, in part due to their role in theoretical physics as backgrounds in supersymmetric gauge theories  \cite{festuccia_transversally_2020,qiu_5d_2013}  and the AdS/CFT correspondence \cite{gauntlett_new_2004,gauntlett_sasaki-einstein_2004,martelli_geometric_2006,martelli_geometric_2008}. Let $(M,H)$ be a $(2n+1)$-dimensional compact co-oriented contact manifold with contact form $\alpha$ and associated Reeb vector field $R_\alpha$. We say that $(M,H)$ is  toric if it carries an effective action by a torus $G$ of dimension $n+1$ preserving the contact structure. Recall that $(M,H)$ is of Reeb type if the Reeb vector field is generated by a one-parameter subgroup of $G$. Toric contact manifolds of Reeb type carry an invariant Sasakian structure  \cite{boyer_note_2000} and as shown by Lerman \cite{lerman_contact_2001},  they admit a combinatorial description in terms of their moment map images, which are strictly convex rational polyhedral cones. This structure has been exploited to compute various invariants of toric contact manifolds, such as the volume \cite{goertsches_2017, martelli_geometric_2006}, the first and second homotopy groups \cite{lerman_homotopy_2004}, the equivariant cohomology ring \cite{luo_cohomology_2012} and the cylindrical contact homology \cite{abreu_contact_2012}.

		In this paper, we consider the index of the horizontal Dolbeault operator on compact toric contact manifolds of Reeb type endowed with an invariant Sasakian structure. This operator is the odd dimensional analogue of the Dolbeault operator in Kähler geometry. It appears for instance in \cite{festuccia_transversally_2020,qiu_5d_2013} in the calculation of  perturbative partition functions of certain supersymmetric field theories, in \cite{tievsky_analogues_2008} in relation to deformations of Sasakian structures and in \cite{baraglia_moduli_2016} to compute the dimension of  moduli spaces of instantons on contact 5-manifolds. The operator is  elliptic transverse to the Reeb foliation and on toric Sasaki manifolds, it is elliptic in directions transversal to the $G$-orbits.
		
		In  \cite{atiyah_elliptic_1974}, Atiyah-Singer  proved that  a pseudodifferential operator $A$ that is $G$-transversally elliptic may have an infinite-dimensional kernel and cokernel, but they define a virtual trace-class representation of $G$. The index of $A$ can therefore be defined as a generalised function on $G$ by
			\begin{equation*}
				\ind_G^M(A)(t)=\text{tr}(t|_{\ker A})-\text{tr}(t|_{\text{coker} A}).
			\end{equation*}
			Decomposing $\ker A$ and $\text{coker} A$ into isotypical components, we have
			\begin{equation*}
				\ind_G^M(A)(t)=\sum_{\mu\in \hat{G}} m(\mu)\chi_\mu,
			\end{equation*}
		where $m\colon \hat{G}\to \Z$ encodes the multiplicities of the irreducible $G$-representations appearing in the index character.
		We will compute explicitly the function $m$ when $A$ is the horizontal Dolbeault operator $\dolbeault_H$ and more generally, we derive a localisation theorem for the index when $\dolbeault_H$  is coupled to a holomorphic bundle. 

		Our first main result is an Atiyah-Bott-Lefschetz type formula  for the twisted horizontal Dolbeault operator: 
				\begin{theorem}
				\label{locformulaIndex}
				Let $\dolbeault_H^E$ be the horizontal Dolbeault operator on a compact toric  Sasaki $(2n+1)$-manifold $M$ twisted by a $G$-equivariant transversally holomorphic bundle $E$ over $M$.  For any $t\in G$,
				\begin{equation*}
					\ind^M_G(\dolbeault_H^E)(t)=\sum_{L\in E(C)} \chi_{E|_{L}}(t)\prod_{i=1}^n\left(\frac{1}{1-t^{-{w}^i_L}}\right)^{\pm}\delta(1-t^{\mu_L}),
				\end{equation*}
				where $E(C)$ is the set of edges of the moment cone $C$, $\{w_L^1, \dots, w_L^n\}$ are the isotropy weights  and $\mu_L$ is the weight of the action of $G$ on the closed Reeb orbit corresponding to $L$.
			\end{theorem}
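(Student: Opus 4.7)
The plan is to invoke the Atiyah--Singer framework for $G$-transversally elliptic operators. Since $\dolbeault_H^E$ is $G$-transversally elliptic, its principal symbol determines a class $[\sigma] \in K^0_G(T_G M)$, and the distribution $\ind^M_G(\dolbeault_H^E)$ on $G$ depends only on this class. The goal is to decompose $[\sigma]$ into contributions supported near each closed Reeb orbit and to evaluate each local piece explicitly using multiplicativity of the transversally elliptic index.

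The first step is to identify the closed Reeb orbits via the good cone condition on $C$. Each edge $L \in E(C)$ corresponds to a single one-dimensional $G$-orbit $S_L \se M$, which is a circle stabilised by a codimension-one subtorus $G_L \se G$; the Reeb vector field is tangent to $S_L$ and acts with weight $\mu_L$, while the complex normal bundle of $S_L$, with respect to the transverse complex structure of the Sasakian geometry, decomposes as a $G_L$-equivariant sum of lines with weights $w_L^1, \dots, w_L^n$. On the complement of a small $G$-invariant tubular neighbourhood of $\bigsqcup_L S_L$, every $G$-orbit is $(n+1)$-dimensional, so the Reeb direction already lies in the tangent directions of the $G$-action. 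There the transversally elliptic symbol becomes $G$-elliptic over a free orbit space, and the standard Atiyah--Singer vanishing theorem for $G$-transversally elliptic operators on spaces without small stabilisers shows that this piece contributes nothing to the distribution.

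The second step is to apply excision in equivariant K-theory and reduce to local models near each $S_L$. A tubular neighbourhood $\cU_L$ is equivariantly isomorphic to an open neighbourhood of the zero section in the induced space $G \times_{G_L} V_L$ with $V_L = \bigoplus_i \C_{w_L^i}$, and the symbol of $\dolbeault_H^E$ factors as the external product of (i) the tangential transversally elliptic symbol along $S_L \cong G/G_L$ coupled to $E|_L$ and (ii) the Koszul--Dolbeault symbol on the normal representation $V_L$. The contribution of (i) is the distribution $\chi_{E|_L}(t)\,\delta(1 - t^{\mu_L})$ supported on the codimension-one subtorus $\{t^{\mu_L} = 1\}$, while the contribution of (ii) is given by the standard Bott formula for the equivariant index of the Dolbeault complex on a representation, namely $\prod_{i=1}^n (1-t^{-w_L^i})^{-1}$.

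The principal obstacle is interpretational rather than computational: the rational expression $\prod_i (1 - t^{-w_L^i})^{-1}$ is not a character of $G$, but only defines a distribution after formal expansion in a chamber of $\fg^*$ adapted to the signs of $\langle \mu_L, \cdot \rangle$ relative to the weights $w_L^i$. This chamber dependence is precisely what the $(\pm)$ notation in the statement encodes. To make the decomposition rigorous, one needs the Paradan--Vergne-type regularisation for transversally elliptic symbols, which produces a well-defined distribution on $G$ once a positive chamber is chosen, together with a verification that the resulting sum over $L$ patches into a single global distribution independent of these choices.
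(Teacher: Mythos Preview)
Your local computation near each closed Reeb orbit is essentially correct and matches the paper: the multiplicative property splits the contribution into the zero operator on $S_L$ (giving $\delta(1-t^{\mu_L})$), the pushed Dolbeault symbol on the normal $\C^n$ (giving $\prod_i(1-t^{-w_L^i})^{-1}$ with a sign prescription), and the trivialised twist $E|_L$ (giving $\chi_{E|_L}$).

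The genuine gap is in your localisation step. Your assertion that ``on the complement of a small $G$-invariant tubular neighbourhood of $\bigsqcup_L S_L$, every $G$-orbit is $(n+1)$-dimensional'' is false: points whose moment image lies in the relative interior of a codimension-$k$ face of $C$ have $k$-dimensional isotropy, so the strata $M_1\setminus M_2,\dots,M_{n-1}\setminus M_n$ are nonempty in general. Consequently there is no ``standard vanishing theorem'' that kills the complement; even on the principal stratum the index is that of an elliptic operator on the quotient and has no reason to vanish. The paper does \emph{not} argue by any such vanishing. Instead it builds, from a polarizing vector $\epsilon\in\fg$, a specific $G$-tangent vector field $\epsilon^\perp$ whose zero set is exactly $M_n$, and uses it to define Atiyah's pushed symbol. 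This deformation makes the restriction of $[\sigma(\dolbeault_H^E)]$ to every level $M_i\setminus M_{i+1}$ with $i<n$ represent the zero class in $K_G(T_G^*(M\setminus M_{i+1}))$ (Proposition~\ref{localization} applied with $j=n$), so only $M_n$ survives. You need this deformation argument, or an equivalent one, to justify discarding the intermediate strata.

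This also clarifies the $\pm$ signs you flag as an obstacle. They are not an \emph{a posteriori} regularisation choice but are dictated by the same polarizing vector $\epsilon$: one has $s_L^i=+$ if $w_L^i(\epsilon_L^\perp)>0$ and $s_L^i=-$ otherwise, because these signs are built into the construction of the pushed symbol $\sigma^{\epsilon_L^\perp}$ on $\C^n$ (Theorem~\ref{indexweights}). The global consistency you worry about is automatic once a single $\epsilon$ is fixed, and no separate Paradan--Vergne patching argument is required at this stage.
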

		The signs $\pm$ dictate whether the denominator is expanded about $t=0$ or $t=\infty$ and are fixed by the pairing of the isotropy weights with a polarizing vector, see   Section \ref{index_dolbeault} for detailed explanations.

		 Our method is based on Atiyah's algorithm outlined in \cite{atiyah_elliptic_1974} to stratify $M$ using the torus action and reduce the index calculation to computations on lower dimensional submanifolds. Using the good cone property of toric contact manifolds of Reeb type, we construct a deformation vector field that in fact localises the index to contributions from a finite number of closed Reeb orbits corresponding to the edges of the moment cone.	 
	We note that a general cohomological formula for the index of $G$-transversally elliptic operators was obtained in \cite{berline_chern_1996,berline_indice_1996} and more specifically for contact manifolds in \cite{fitzpatrick_contact_2009}. These formulas are however not well-adapted to computing the multiplicities since they provide an expression for the index that is valid only on a neighbourhood of each $t\in G$. Even in the elliptic case, deducing the function $m$ from the Atiyah-Segal-Singer fixed point theorem is not easy. Our approach is to exploit the combinatorial structure of the manifold $M$ to determine the function $m$ as explicitly as possible. For instance, Theorem  \ref{locformulaIndex}  can be readily applied to compute the dimensions of moduli spaces of  instantons \cite{baraglia_moduli_2016} and transverse Seiberg-Witten monopoles \cite{kordyukov_instantons_2016} on toric Sasaki manifolds.

	We further remark that in Theorem  \ref{locformulaIndex}   the Sasakian structure is not assumed to be quasi-regular and the formula applies in particular to the irregular $Y^{p,q}$ spaces \cite{gauntlett_sasaki-einstein_2004}. When the Sasakian structure is regular, the manifold is a principal circle bundle over a toric K\"ahler manifold $X$. In this case, $\dolbeault_H$  descends to the usual Dolbeault operator on $X$ twisted by the character line bundles of the circle and the index can be computed using the  Lefschetz fixed point formula.

		 Our second result is an expression for the index of $\dolbeault_H$   in terms of the integral points of the moment cone:
		\begin{theorem}
				\label{index_lattice}
		The index of the horizontal Dolbeault operator $\dolbeault_H$ is given by
					\begin{equation*}
						\ind^M_G( \dolbeault_H)(t)=(-1)^n\sum_{\mu\in C^\circ \cap \Z_G^*} t^\mu+\sum_{\mu\in (-C)\cap \Z_G^*} t^\mu,
					\end{equation*}
					where $\Z_G^*$ denotes the dual integral lattice of $\fg$, $C^\circ$ the interior of the moment cone and $-C$  the negative cone.
		\end{theorem}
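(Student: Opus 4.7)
The plan is to apply Theorem~\ref{locformulaIndex} with trivial coefficient bundle, expand each edge contribution as a formal Fourier series on $G$, and evaluate the resulting signed sum via an adaptation of the Lawrence--Varchenko polytope decomposition to the rational polyhedral cone $C$. Setting $E$ trivial in Theorem~\ref{locformulaIndex} (so that $\chi_{E|_L}(t) = 1$) gives
\[
\ind^M_G(\dolbeault_H)(t) = \sum_{L \in E(C)} \prod_{i=1}^n \left(\frac{1}{1 - t^{-w_L^i}}\right)^{\pm} \delta(1 - t^{\mu_L}),
\]
and the task is to show that this distribution has Fourier coefficients $(-1)^n \mathbf{1}_{C^\circ} + \mathbf{1}_{-C}$ on $\Z_G^*$.

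As a preliminary, I would record the following consequence of Lerman's good cone condition: at every edge $L \in E(C)$ the Reeb weight $\mu_L$ together with the transverse isotropy weights $w_L^1, \dots, w_L^n$ forms a $\Z$-basis of the integral dual lattice $\Z_G^*$. This is the contact counterpart of the Delzant condition at a vertex and produces a standard local model at each closed Reeb orbit, which is what lets us identify each distributional summand in the index formula with an honest generating function of lattice points.

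The next step is the Fourier expansion itself. Using
\[
\delta(1 - t^{\mu_L}) = \sum_{k \in \Z} t^{k\mu_L}, \qquad \left(\frac{1}{1 - t^{-w}}\right)^{+} = \sum_{j \geq 0} t^{-jw}, \qquad \left(\frac{1}{1 - t^{-w}}\right)^{-} = -\sum_{j \geq 1} t^{jw},
\]
the contribution of edge $L$ reduces to $(-1)^{s_L}\sum_{\mu \in Q_L} t^\mu$, where $s_L$ is the number of weights flipped at $L$ by the polarizing vector and $Q_L \subseteq \Z_G^*$ is the half-cone of lattice points of the form $k\mu_L + \sum_i \eta_i^L j_i w_L^i$ with $k \in \Z$, flipping signs $\eta_i^L \in \{\pm 1\}$, and $j_i \geq 0$ or $j_i \geq 1$ according to the sign prescription.

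The final and most delicate step is the combinatorial identity
\[
\sum_{L \in E(C)} (-1)^{s_L}\mathbf{1}_{Q_L}(\mu) = (-1)^n\mathbf{1}_{C^\circ}(\mu) + \mathbf{1}_{-C}(\mu) \qquad \text{for all }\mu \in \Z_G^*,
\]
which is precisely the adaptation of the Lawrence--Varchenko decomposition to the unbounded cone $C$. The classical Lawrence--Varchenko identity decomposes a simple polytope into signed forward tangent cones at its vertices; here the forward cones are attached to the edges of a single apex cone and must reproduce the interior $C^\circ$ with sign $(-1)^n$ and the opposite cone $-C$ with sign $+1$. Establishing this identity is the main obstacle: one must track how the polarizing vector $\xi \in \fg$ orients each weight at each edge, verify that the $\pm 1$ flipping signs agree with the parity prescribed by the classical LV bookkeeping, and check that off $C^\circ \cup (-C)$ all the signed half-cones $Q_L$ cancel. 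The good cone condition reduces this to a local check at each edge, where simpliciality of the tangent data makes the cancellation transparent; summing Fourier series on both sides then yields the theorem.
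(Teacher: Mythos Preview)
Your strategy coincides with the paper's: start from Theorem~\ref{locformulaIndex} with trivial $E$, expand each edge contribution as the lattice-point generating series of a half-open simplicial cone with sign $(-1)^{s_L}$, and then invoke a cone version of the Lawrence--Varchenko identity to collapse the signed sum. Two points need tightening.

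First, your expansions of $\left(\tfrac{1}{1-t^{-w}}\right)^{\pm}$ are swapped relative to the paper's Definition~\ref{expansions}, where $(\cdot)^+$ is the series in \emph{positive} powers of $w$ (and carries the minus sign). This is harmless for the final answer, since swapping amounts to replacing the polarizing vector $\epsilon$ by $-\epsilon$, but you should be consistent with whatever convention fixes the $\pm$ in Theorem~\ref{locformulaIndex}; otherwise the open/closed faces of your $Q_L$ land on the wrong side and you hit Proposition~\ref{LVconesGenerating} instead of Proposition~\ref{LVconesdual2}.

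Second, and more substantively, your final sentence mis-describes what is needed. The identity
\[
\sum_{L}(-1)^{s_L}\mathbf{1}_{Q_L}=(-1)^n\mathbf{1}_{C^\circ}+\mathbf{1}_{-C}
\]
is a \emph{global} cancellation statement about how the cones $Q_L$ overlap; it cannot be reduced to ``a local check at each edge'', since at a single edge one sees only one $Q_L$ and no cancellation at all. The paper's argument (Section~\ref{LVconesSECTION}) is to slice $\fg^*$ by hyperplanes $\mathcal H_\lambda$ parallel to the characteristic hyperplane: for $\lambda\geq 0$ each $Q_L\cap\mathcal H_\lambda$ is a polarized tangent cone of the rescaled polytope $\lambda P$ and the sum collapses by the classical polytope formula (Theorem~\ref{LVtheorem}), while for $\lambda<0$ one gets the \emph{dual} polarized tangent cones and must invoke the interior version (Theorem~\ref{interiorLVtheorem}), which the paper proves separately. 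There is also an identification step you have skipped: the isotropy weights $w_L^i$ are not literally the edge vectors of $P$ at the corresponding vertex, and one must pass to the dual basis of $\{R,v_1^L,\ldots,v_n^L\}$ and verify (Lemmas~\ref{conedescription} and~\ref{signcollection}) that the resulting cones and polarization signs agree with those in the Lawrence--Varchenko bookkeeping. That is where the good cone condition actually enters, not in the cancellation itself.
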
 
	This follows by adapting and applying a version of the Lawrence-Varchenko formula to the polar decomposition of cones over  a polytope (Proposition \ref{LVconesdual2}). Another key ingredient in the proof is Lerman's local description of toric contact manifolds \cite{lerman_contact_2001}, that allows us to relate the weights of the torus action to the moment cone and identify the localisation formula in Theorem \ref{locformulaIndex} with the Lawrence-Varchenko formula for  rational polyhedral cones.		

	In \cite{martelli_geometric_2008}, Martelli-Sparks-Yau considered the Dolbeault operator on  an orbifold resolution of the non-compact K\"ahler cone of a Sasaki manifold and showed that the equivariant index equals the integral points of the moment cone. This corresponds essentially to the second term of the index of $\dolbeault_H$ in Theorem \ref{index_lattice}. A similar limiting argument as in \cite{martelli_geometric_2008} applied to this term would compute the volume of the momentum polytope, which up to a constant equals the volume of the toric Sasaki manifold. Another  related recent work is  by Lin-Loizides-Sjamaar-Song \cite{loizides}, where they study the equivariant index of the basic Dirac operator on Riemannian foliations whose leaf space is symplectic and establish a quantization commutes with reduction theorem. This includes toric $K$-contact manifolds as a special case, however their setup is complementary to ours as their operator only acts on basic sections, corresponding to the invariant part of our index.

		The paper is structured as follows. In Section \ref{toric_contact} we provide a brief review of contact and Sasakian structures, recall some results from \cite{lerman_contact_2001} including a local normal form for toric contact manifolds and Lerman's construction of a toric contact manifold from a good cone. We also introduce the main object of this paper, the horizontal Dolbeault operator. Section \ref{index} introduces the fundamental concepts in the theory of $G$-transversally elliptic operators and our main computational tool, Atiyah's  algorithm for localising the index. In Section \ref{index_dolbeault} we introduce a deformation vector field and apply the localisation argument to derive Theorem \ref{locformulaIndex}. Finally, in Section \ref{lattice}, we prove a cone version of the Lawrence-Varchenko formula and apply it to the index of the horizontal Dolbeault operator to obtain an explicit lattice point formula.

\section{Toric contact manifolds}
	\label{toric_contact}
		Let $M$ be a smooth compact manifold of  dimension $2n+1$.  Recall that a contact structure on $M$ is a hyperplane distribution $H\subset TM$ defined globally by $H=\ker\alpha$ for a  1-form  $\alpha$ such that $d\alpha|_H$ is non-degenerate.  The contact form $\alpha$ defines a volume form $\alpha\wedge (d\alpha)^n$ on $M$ and its conformal class determines a co-orientation of the pair $(M, H)$. 
		Associated to every co-oriented contact manifold is a symplectic cone $(C(M),\omega)$ defined by
		\begin{equation*}
			C(M)=M\times \R\;\;\text{and}\;\;\omega=d(e^r\alpha),
		\end{equation*}
		where $r$ is the coordinate in $\R^+$ and  $2 \frac{\partial}{\partial r}$ is the Liouville vector field.  The Reeb vector field associated to $\alpha$ is the unique vector field $R_\alpha$ satisfying  $\iota_{R_\alpha} \alpha =1$ and $\iota_{R_\alpha} d\alpha=0$.  We let $V \subset TM$ denote the rank one sub-bundle spanned by $R_\alpha$.

	A {\em contact metric structure} on  $(M, \alpha, R_\alpha)$ is a reduction of structure of the tangent bundle to $U(n) \subset GL(2n+1,\mathbb{R})$. Alternatively, it consists of an endomorphism $\Phi \colon TM \to TM$ and a Riemannian metric $g$ such that $\Phi^2 = -I + \alpha \otimes R_\alpha$ and $g( \Phi X , \Phi Y) = g(X,Y) - \alpha(X) \alpha(Y)$ for all vector fields $X,Y$. This yields an orthogonal decomposition $TM = V \oplus H$ together with a unitary structure on $H$. The restriction $J = \Phi|_H$ of $\Phi$ to $H$ defines the complex structure on $H$ and $d\alpha(X,Y) = g(X,\Phi Y)$ restricted to $H$ is the Hermitian $2$-form associated to $J$. 

	We say that $M$ is  a {\em$K$-contact manifold} if $R_\alpha$ is a Killing vector field with respect to $g$. This is equivalent to the characteristic  foliation generated by $R_\alpha$ being a Riemannian foliation. Extending $g$ to the symplectic cone, we obtain a metric $h=dr^2+r^2g$ on $C(M)$ and an associated almost complex structure $J_C$ defined by $h(X,J_CY)=\omega(X,Y)$.  A contact metric structure $(\alpha, R_\alpha, \Phi, g)$  on $M$ is called {\em Sasakian} if  $(h, J_C,\omega)$ is a Kähler structure on $C(M)$.  Sasaki manifolds  constitute the most important class of  $K$-contact manifolds and are the odd dimensional counterparts to Kähler manifolds.

			\begin{example}
				\label{boothby_wang} Geometric quantisation provides  examples of {\em quasi-regular} $K$-contact manifolds, that is when all leaves of the characteristic foliation are circles.
				Let $(B,\omega)$ be a symplectic manifold such that $[\omega]\in H^2(B,\Z)$. Let $M$ denote the  principal $S^1$-bundle over $B$ with  Chern class equal to $[\omega]$. There is a connection form $\alpha$ on $M$ such that $d\alpha=\pi^*\omega$. Since $\omega$ is symplectic, we have 
					$\alpha\wedge (d\alpha)^n=\alpha\wedge\pi^*\omega^n\neq 0,$ so $\alpha$ is a contact form and its Reeb vector field $R_\alpha$ is the generator of the free $S^1$-action on $M$.  Such contact manifolds are regular and the projection $\pi \colon  M \to B$ is known as the Boothby-Wang fibration  \cite{boothby_contact_1958}. 
									This construction generalises to symplectic orbifolds $(B,\omega)$ such that $[\omega] \in H^2(B,\mathbb{R})$ admits a lift to a class $c \in H^2_{{\rm orb}}(B,\mathbb{Z})$, the degree $2$ orbifold cohomology of $B$. Then $c$ defines a Seifert fibration $\pi \colon  M \to B$ carrying a pseudo-free  $S^1$-action and $M$ admits the structure of a quasi-regular $K$-contact manifold. When $B$ is a Kähler orbifold,  $M$ acquires a Sasakian structure.
			\end{example}

				Let $G$ be a torus of dimension $n+1$,  $\fg$  its Lie algebra and $\fg^*$ its dual Lie algebra. We will denote by $\Z_G=\ker(\exp\colon \fg\to G)$ the integral lattice of $\fg$. Suppose that $G$ acts on a manifold $M$. If $v\in \fg$, we  denote by $v(p)\in T_p M$ the tangent vector induced by the action of $G$ on $M$.

			\begin{definition}
				A contact manifold $(M, H)$ of dimension $2n+1$ is called {\em toric} if there is an effective action by an $(n+1)$-dimensional torus $G$ on $M$ preserving the contact form. The {\em  $\alpha$-moment map}  $\phi_\alpha\colon M\to \fg^*$ is defined by 
				\begin{equation*}
				    \inn{\phi_\alpha(p)}{v}=\alpha_p(v(p))
				\end{equation*}
				for all $p\in M$ and $v\in \fg$. The {\em  moment cone} associated with $\phi_\alpha$ is defined by
				\begin{equation*}
				    C=\left\{t\phi_\alpha(p)\ | \  t\geq 0, p\in M  \right\},
				\end{equation*} 
	and can be identified with  the union of $\{0\}$ with the image of the  moment map $e^r \phi_\alpha$ of the lifted Hamiltonian $G$-action on $C(M)$, where $G$ acts trivially on $\R$.
			\end{definition}

	The classification of compact toric contact manifold was completed by Lerman \cite{lerman_contact_2001}. In dimensions greater than three and when the $G$-action is not free, the contact toric manifolds are classified by their moment cones, which are good cones:
			\begin{definition}

				A cone $C\subset \fg^*$ is {\em good} if there exists a minimal set of primitive vectors $v_1,\ldots,v_d\in\Z_G$, with $d\geq n+1$, such that:

				\begin{enumerate}[label=(\roman*)]
				    \item $C=\bigcap\limits_{j=1}^d\{y\in\fg^*\ | \ \left<y,v_j\right>\geq 0\}$,
				    \item Any codimension-$k$ face of $C$, $1\leq k\leq n$, is the intersection of exactly $k$ facets whose set of normals can be completed to an integral base of $\Z_G$.
				\end{enumerate}
			\end{definition}

		\begin{remark}
				Good cones are rational polyhedral, meaning that the normals to the facets are integral vectors.
			\end{remark}

			Toric contact manifolds can be further divided into  Reeb and non-Reeb types.  We say that $M$ is of {\em Reeb type} if $R_\alpha$ is generated by an element $R\in\fg$.

			\begin{theorem}[\cite{boyer_note_2000},  \cite{lerman_contact_2001}]
				\label{strictly_convex}
				If $(M,\alpha)$ is a toric contact manifold of Reeb type, then its moment cone $C$ is a strictly convex good cone. The image of the $\alpha$-moment map $\phi_\alpha$ is a compact convex simple polytope $P$ given by the intersection of the characteristic hyperplane
				\begin{equation*}
				   \mathcal H=\left\{\eta\in \fg^*\ | \ \eta(R)=1 \right\},
				\end{equation*}
				determined by the vector $R$, with the moment cone $C$.
			\end{theorem}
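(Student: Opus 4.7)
The plan is to leverage the Reeb-type hypothesis to confine $\phi_\alpha(M)$ to the characteristic hyperplane $\mathcal H$, from which both halves of the theorem will follow. Since $R_\alpha$ is the fundamental vector field of some $R \in \fg$, the defining identity $\alpha(R_\alpha) = 1$ rephrases as $\langle \phi_\alpha(p), R \rangle = 1$ for every $p \in M$, so $\phi_\alpha(M) \subseteq \mathcal H$.

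Strict convexity of $C$ then drops out immediately: any nonzero $\eta \in C$ equals $t\phi_\alpha(p)$ with $t > 0$, so $\eta(R) = t > 0$, which prevents $-\eta$ from lying in $C$ and gives $C \cap (-C) = \{0\}$. The identification $P = C \cap \mathcal H$ is equally direct. The inclusion $\phi_\alpha(M) \subseteq C \cap \mathcal H$ is built into the definitions, and conversely any $\eta \in C \cap \mathcal H$ can be written $\eta = t\phi_\alpha(p)$ with $t \geq 0$, whereupon pairing with $R$ forces $t = 1$ and places $\eta$ in $\phi_\alpha(M)$.

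The remaining properties of $P$ are then read off. Compactness follows from compactness of $M$ and continuity of $\phi_\alpha$; convexity is automatic since $P$ is the slice of the convex set $C$ by an affine hyperplane; and simplicity follows from the good-cone condition, because a vertex of $P$ corresponds to an edge (codimension-$n$ face) of $C$, which by the good-cone property meets exactly $n$ facets of $C$, producing a vertex of $P$ incident to exactly $n$ facets of $P$.

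The only substantive external input here is the good-cone property of the moment cone itself, which is the combinatorial heart of Lerman's classification of compact toric contact manifolds \cite{lerman_contact_2001}. This is the hard part of the theorem, but it is an input cited from the literature; once granted, the Reeb-type assumption converts it cleanly into the strictly convex good cone and simple polytope picture claimed above.
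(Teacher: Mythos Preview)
The paper does not supply its own proof of this theorem: it is stated with citations to \cite{boyer_note_2000} and \cite{lerman_contact_2001} and no proof environment follows. Your sketch is a correct derivation of the strictly-convex and polytope assertions from Lerman's good-cone theorem, which you rightly identify as the substantive external input; the Reeb-type hypothesis confines $\phi_\alpha(M)$ to $\mathcal H$, and everything else (strict convexity via $\eta(R)>0$, the identification $P=C\cap\mathcal H$, compactness, convexity, simplicity) follows as you indicate.
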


			\begin{remark}
				Strictly convex means that $C$ contains no linear subspaces of positive dimension, so it is a cone over a polytope. Toric contact manifolds with good moment cones $C$ that are not strictly convex are diffeomorphic to $\T^k\times S^{k+2l-1}$, for some $k>1,l\geq 0$  \cite{lerman_completely_2002}.  
			\end{remark}

	Toric contact manifolds with an invariant $K$-contact structure must be of Reeb type \cite{lerman_homotopy_2004} and they always admit  an invariant Sasakian structure \cite{boyer_note_2000}. In the sequel, we will therefore assume that our toric contact manifolds are of Reeb type of dimension greater than three and equipped with an invariant Sasakian structure. We will need the following result  characterising the Reeb vector fields associated to a Sasakian structure:

			\begin{theorem}[\cite{martelli_geometric_2006}]
				\label{sasaki_reebs}
				Let $v_1,\ldots,v_d\in\fg$ be the defining integral normals of the moment cone $C\in\fg^*$ associated with a toric contact manifold of Reeb type $(M,H)$. A vector $R\in \fg$ generates the Reeb vector field of an invariant Sasakian $1$-form $\alpha$ such that $H=\ker\alpha$ if and only if
				\begin{equation*}
				    R=\sum_{j=1}^d a_jv_j,\text{ with } a_j\in \R^+ \text{ for all } j=1,\ldots,d.
				\end{equation*}
			\end{theorem}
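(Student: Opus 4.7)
The plan is to identify the condition that $R$ admits a representation $R=\sum_j a_j v_j$ with all $a_j>0$ with the statement that $R$ lies in the interior of the dual cone $C^\vee\se\fg$ of the moment cone. The good cone presentation $C=\bigcap_{j=1}^d\{y\in\fg^*:\inn{y}{v_j}\geq 0\}$ and convex duality identify $C^\vee$ with the cone generated by $v_1,\ldots,v_d$, while strict convexity of $C$ (Theorem \ref{strictly_convex}) ensures that $C^\vee$ is full-dimensional. One then verifies $\op{int}(C^\vee)=\{\sum_j a_j v_j:a_j>0\}$: positive combinations of the generators lie in the interior by general convex geometry, and conversely, if $R\in\op{int}(C^\vee)$ then $R-\epsilon v_j\in C^\vee$ for small $\epsilon>0$, giving a representation of $R$ in which $v_j$ has strictly positive coefficient; averaging these $d$ representations yields one with all coefficients positive.

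For the forward implication, assume $R$ generates the Reeb vector field of an invariant Sasakian form $\alpha$ with $\ker\alpha=H$. The defining relation $\iota_{R_\alpha}\alpha=1$ together with the moment map identity gives $\inn{\phi_\alpha(p)}{R}=1$ for every $p\in M$, so $\phi_\alpha(M)\subset\mathcal H$. Since the moment cone is the union of the nonnegative rays through $\phi_\alpha(M)$, every nonzero $y\in C$ satisfies $\inn{y}{R}>0$, and therefore $R\in\op{int}(C^\vee)$.

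For the backward implication, pick any $G$-invariant Sasakian form $\alpha_0$ with $\ker\alpha_0=H$, which exists by \cite{boyer_note_2000}, and let $R_0\in\fg$ be its Reeb generator. By Theorem \ref{strictly_convex}, $\phi_{\alpha_0}(M)$ sits in the hyperplane $\{\eta(R_0)=1\}\cap C\subset C\setminus\{0\}$. The function $f(p):=\alpha_0(R(p))=\inn{\phi_{\alpha_0}(p)}{R}$ is thus $G$-invariant and, since $R\in\op{int}(C^\vee)$, strictly positive on $M$. The rescaled form $\alpha':=f^{-1}\alpha_0$ is $G$-invariant, satisfies $\ker\alpha'=H$ and $\alpha'(R(\cdot))\equiv 1$; combined with $G$-invariance and Cartan's formula this forces $\iota_{R(\cdot)}d\alpha'=0$, so $R$ generates the Reeb vector field of $\alpha'$.

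The main obstacle is upgrading $\alpha'$ from a contact to a Sasakian form with the prescribed Reeb vector. For this I would invoke the Kähler cone perspective: the choice of $R\in\op{int}(C^\vee)$ polarises the symplectic cone $(C(M),\omega)$, and a $G$-invariant Kähler cone metric compatible with this polarisation can be built by prescribing a strictly convex toric symplectic potential on $\op{int}(C)$ with the standard singular behaviour along each facet $\{\inn{\cdot}{v_j}=0\}$, as in \cite{martelli_geometric_2008}. Restricting the induced transverse Kähler structure back to $M$ endows it with the desired invariant Sasakian structure whose Reeb field is generated by $R$.
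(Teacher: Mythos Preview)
The paper does not supply its own proof of this theorem; it is quoted from \cite{martelli_geometric_2006} and used as a black box. There is therefore nothing in the paper to compare your argument against.

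That said, your outline is sound and follows the standard route. The identification of the condition ``$R=\sum_j a_j v_j$ with all $a_j>0$'' with $R\in\op{int}(C^\vee)$ is correct, and your forward implication is complete: the Reeb normalisation forces $\inn{\phi_\alpha(p)}{R}=1$, hence $\inn{y}{R}>0$ for every nonzero $y\in C$. In the backward implication your rescaling step is also fine; the point you left implicit, that $df(R(\cdot))=0$ so that $\iota_{R(\cdot)}d\alpha'=0$ really holds, follows because $f=\inn{\phi_{\alpha_0}(\cdot)}{R}$ is $G$-invariant (the torus is abelian, so the moment map is invariant) and $R(\cdot)$ is tangent to the $G$-orbits.

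The only place where your write-up is genuinely a sketch rather than a proof is the final paragraph: producing an invariant \emph{Sasakian} structure with prescribed Reeb vector from the contact form $\alpha'$ requires the toric K\"ahler cone machinery (symplectic potentials with the correct boundary behaviour along the facets of $C$), and you are effectively deferring to \cite{martelli_geometric_2006,martelli_geometric_2008} for this. That is entirely appropriate given that the paper itself treats the result as imported, but be aware that this is where the real analytic content lies and that the construction you invoke is precisely the one carried out in those references.
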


			\begin{example}
				\label{boothby_wang_toric} Returning to Example \ref{boothby_wang}, if $(B,\omega)$ is an integral toric symplectic manifold, then the  principal $S^1$-bundle over $B$ is a good toric contact manifold with moment cone
				$$C= \{ r (x,1)\in \R^n\times \R \ | \  x\in P, r\geq 0\}, $$
				where $P\subset \R^n$ is the integral Delzant polytope associated with $B$.
			\end{example}

	   \begin{example} For each pair of coprime integers $p,q$ with $0<q<p$, the $Y^{p,q}$ spaces  are  toric Sasaki-Einstein metrics on $S^2 \times S^3$. The Sasakian structure is   irregular of rank $2$ whenever $4p^2 - 3q^2$ is a not perfect square  \cite{gauntlett_sasaki-einstein_2004}. In higher dimensions they generalise  to the family of toric contact manifolds $N^{2n+1}_{k,m}$, $n\geq 2$, $k\geq 1$ and $0\leq m<kn$, associated to the good cones $C(k,m)\subset (\R^{n+1})^*$ defined by the normals
		    	\begin{align*}
		    	 	v_i&=e_i+e_{n+1},  \
		    	 	v_n=-\sum_{i=0}^{n-1}e_i+me_n+e_{n+1}, \  
		    	 	v_-=ke_n+e_{n+1},  \
		    	 	v_+=-e_n+e_{n+1},
		    	\end{align*}
		    	where $e_i\in (\R^{n+1})$, $i=1,\ldots,n$, are the canonical basis vectors of $\R^{n+1}$. Unlike the $Y^{p,q}$ spaces, $N^{2n+1}_{k,m}$ are not all diffeomorphic \cite{abreu_kahlersasaki_2010}.
		    \end{example}

		\subsection{Lerman's construction}
			\label{lermansconstruction}
					The classification of toric contact manifolds of Reeb type is analogous to Delzant's classification of toric symplectic manifolds \cite{boyer_note_2000}, \cite{lerman_contact_2001}. In this section, we briefly recall the construction of a toric contact manifold from its moment cone and elucidate the relation with the isotropy weights.

			Let $C\subset \fg^*$ be a strictly convex good cone given by
			\begin{equation*}
				C=\bigcap_{i=1}^d\{\eta\in \fg^*\ | \  \eta(v_i)\geq 0\},
			\end{equation*}
			where $v_i\in \Z_G$, $i=1,\ldots, d$, are the inward pointing normals of $C$ and $\dim \fg^*>2$.

			Let $\{e_1,\ldots,e_d\}$ denote the standard basis of $\R^d$ and define the map $\beta\colon \R^d\to \fg$ by 
			  	$\beta(e_i)=v_i.$
			Denote by $\fk$ the kernel of $\beta$. Since $C$ is strictly convex, $\beta$ is surjective and we have the short exact sequences
			\begin{equation*}
				0\to \fk \xrightarrow{\iota} \R^d\xrightarrow{\beta} \fg \to 0 \;\text{ and }\; 0\to \fg^* \xrightarrow{\beta^*} (\R^d)^*\xrightarrow{\iota^*} \fk^*  \to 0.
			\end{equation*}
			Since $\beta(\Z^d)\subset \Z_G$, $\beta$ induces a map $\tilde{\beta}\colon \T^d=\R^d/\Z^d\to \fg/\Z_G=G$. Let
			\begin{equation*}
				K=\left\{[t]\in \T^d\ | \  \sum_{i=1}^d t_iv_i\in \Z_G\right\}
			\end{equation*}
			denote the kernel of $\tilde{\beta}$. It is a compact abelian subgroup with Lie algebra $\fk=\ker(\beta)$.
			Consider the standard action of $\T^d$ on $(\C^d, \omega_{st}=i/{2\pi} \sum_{j=1}^d dz_j\wedge d\zbar_j)$ given by
			\begin{equation*}
				[t]\cdot(z_1,\ldots,z_d)=(e^{2\pi i t_1}z_1,\ldots,e^{2\pi i t_d}z_d).
			\end{equation*}
			The corresponding moment map $\phi\colon \C^d\to (\R^d)^*$ is given by
			\begin{equation*}
				\phi(z_1,\ldots,z_d)=\sum_{j=1}^d\mid z_j \mid^2 e_j^*,
			\end{equation*}
					where $\{e_j^*\}$ is the basis dual to the canonical basis $\{e_j\}$.
			Since $K$ is a subgroup of $\T^d$, it acts on $\C^d$ with moment map
			\begin{equation*}
				\phi_K(z_1,\ldots,z_d)=\sum_{j=1}^d\mid z_j \mid^2 \iota^*(e_j^*)\in \fk^*.
			\end{equation*}
			The reduced space  $W_C=\frac{(\phi_K^{-1}(0)\setminus \{0\})}{K}$ is a toric symplectic cone with symplectic form $\omega_C$ induced by $\omega_{st}$. It carries an action of $G=\T^d/K$ induced by the $\T^d$-action and an action of $\R$ induced by the standard radial $\R$-action on $\C^d$. 

			Let $\sigma$ be a section of $\beta\colon \R^d\to \fg$, giving a splitting  $\R^d \cong \iota(\fk)\oplus \sigma(\fg)$. Since $\sigma$ is injective, its image defines an $n$-torus $\sigma(G)\subset \T^d$. The action of $G$ on $(\phi_K^{-1}(0)\setminus \{0\})$ via $\sigma(G)\subset \T^d$ is Hamiltonian with moment map $\tilde{\phi}=\sigma^*\circ\phi\colon (\phi_K^{-1}(0)\setminus \{0\})\to \fg^*$. The $G$-action and the moment map $\tilde{\phi}$ descend to the quotient $W_C$ making it a Hamiltonian $G$-space with moment map
			\begin{align*}
				\phi_G\colon W_C&\to \fg^*\\
				[z_1,\ldots,z_d]&\mapsto\sigma^*(\phi(z_1,\ldots,z_d)),
			\end{align*}
			where we denote by $[z_1,\ldots,z_d]\in W_C$ the class of $(z_1,\ldots,z_d)\in (\phi_K^{-1}(0)\setminus \{0\})$ in the quotient. The image of $\phi_G$ is the cone $C\setminus\{0\}$.
			The sphere $S^{2d-1}=\{z\in \C^d; \lvert z \rvert\}$ is a $\T^d$-invariant hypersurface of contact type in $\C^d$ and
			\begin{equation*}
				M_C=\frac{(\phi_K^{-1}(0)\bigcap S^{2d-1})}{K}
			\end{equation*}
			is a $G$-invariant hypersurface of contact type in $W_C$. Therefore it has a toric contact structure induced by the $G$-invariant contact form $\alpha=i_{X_C}\omega_C$, where $X_C$ is the Liouville vector field induced by the $\R$-action on $W_C$. The moment cone of $(M_C,\alpha)$ is $C$.

			\begin{lemma}
			\label{preimagecone}
				$\phi_K^{-1}(0)=\phi^{-1}(\beta^*(C))$
			\end{lemma}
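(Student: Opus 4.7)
The plan is to prove both inclusions using only the exactness of the two short exact sequences
\begin{equation*}
0 \to \fk \xrightarrow{\iota} \R^d \xrightarrow{\beta} \fg \to 0, \qquad 0 \to \fg^* \xrightarrow{\beta^*} (\R^d)^* \xrightarrow{\iota^*} \fk^* \to 0,
\end{equation*}
together with the observation that by construction $\phi_K = \iota^* \circ \phi$.

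For the inclusion $\phi^{-1}(\beta^*(C)) \subseteq \phi_K^{-1}(0)$, suppose $z \in \C^d$ satisfies $\phi(z) = \beta^*(\eta)$ for some $\eta \in C$. Then $\phi_K(z) = \iota^*\phi(z) = \iota^*\beta^*(\eta) = 0$ by exactness at $(\R^d)^*$, so $z \in \phi_K^{-1}(0)$.

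For the converse inclusion $\phi_K^{-1}(0) \subseteq \phi^{-1}(\beta^*(C))$, suppose $\phi_K(z) = 0$. Then $\iota^*\phi(z) = 0$, so $\phi(z) \in \ker(\iota^*) = \op{im}(\beta^*)$ by exactness, and since $\beta^*$ is injective there is a unique $\eta \in \fg^*$ with $\phi(z) = \beta^*(\eta)$. It then remains to check that $\eta$ actually lies in the cone $C$. This is the only nontrivial step, and is the point where the specific form of $\phi$ and $\beta$ enters: pairing with each $e_i$ gives
\begin{equation*}
\langle \eta, v_i \rangle = \langle \eta, \beta(e_i) \rangle = \langle \beta^*(\eta), e_i \rangle = \langle \phi(z), e_i \rangle = |z_i|^2 \geq 0
\end{equation*}
for every $i = 1, \ldots, d$, so $\eta \in \bigcap_i \{y \in \fg^* : \langle y, v_i\rangle \geq 0\} = C$. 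This finishes the proof. No step presents a genuine obstacle; the only thing to watch is that the map $\phi$ lands in the nonnegative orthant in the basis $\{e_i^*\}$ dual to the $e_i$ on which $\beta$ realises the defining normals $v_i$ of $C$, which is precisely what makes the two descriptions of $\phi_K^{-1}(0)$ coincide.
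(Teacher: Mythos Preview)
Your proof is correct and follows essentially the same route as the paper: both use exactness of the dual sequence to identify $\phi_K^{-1}(0)$ with $\phi^{-1}(\beta^*(\fg^*))$, and then the key computation $\langle \eta, v_i\rangle = \langle \beta^*(\eta), e_i\rangle = |z_i|^2 \geq 0$ to conclude that the relevant $\eta$ lies in $C$. The only cosmetic difference is that the paper packages the argument as the set identity $\beta^*(\fg^*)\cap \phi(\C^d)=\beta^*(C)$, whereas you argue the two inclusions elementwise.
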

			\begin{proof}
				Since $0\to \fg^* \xrightarrow{\beta^*} (\R^d)^*\xrightarrow{\iota^*} \fk^*  \to 0$ is exact, we have $(\iota^*)^{-1}(0)=\beta^*(\fg^*)$. Therefore $\phi_K^{-1}(0)=(\iota^*\circ\phi)^{-1}(0)=\phi^{-1}((\iota^*)^{-1}(0))=\phi^{-1}(\beta^*(\fg^*))=\phi^{-1}(\beta^*(\fg^*)\cap \phi(\C^d))$.
			It follows from
				\begin{align*}
				    \beta^*(\fg^*)\cap \phi(\C^d)&=\{\beta^*(\eta) \ | \ \eta\in\fg^* and \left<\beta^*(\eta),e_i\right>\geq 0 \text{ for all }i\}\\
				    &=\{\beta^*(\eta)\ | \ \eta\in\fg^* and \left<\eta,\beta(e_i)\right>\geq 0 \text{ for all }i\}\\
				    &=\{\beta^*(\eta)\ | \ \eta\in\fg^* and \left<\eta,v_i\right>\geq 0 \text{ for all }i\}\\
				    &=\{\beta^*(\eta)\ | \ \eta\in C\}
				\end{align*}
				that $\phi_K^{-1}(0)=\phi^{-1}(\beta^*(C))$.
			\end{proof}

			The following lemma informs us how to read the isotropy groups from the moment cone.

			\begin{lemma}
				\label{isotropy_toric_contact}
				Let $(M,\alpha)$ be a toric contact manifold of Reeb type with moment cone $C$. Let $p\in M$ and $\eta=\phi_\alpha(p)$ the image of $p$ under the $\alpha$-moment map. If $\eta(v_i)=0$, for a subset of indices $i\in I\subset \{1,\ldots,d\}$, then its isotropy Lie algebra $\fg_p$ is generated by the vectors $v_i$, $i\in I$. 
			\end{lemma}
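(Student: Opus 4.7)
The plan is to exploit Lerman's construction (recalled in Section \ref{lermansconstruction}) to reduce the lemma to an explicit computation on $M_C = (\phi_K^{-1}(0)\cap S^{2d-1})/K$. Since in dimensions greater than three a toric contact manifold of Reeb type is classified by its good moment cone, I may identify $M$ with $M_C$ and represent the given point $p$ by a lift $z=(z_1,\ldots,z_d)\in \phi_K^{-1}(0)\cap S^{2d-1}$. The strategy has two steps: first, translate the vanishing condition $\eta(v_i)=0$ into a condition on the coordinates $z_i$; second, compute the infinitesimal $G$-isotropy of $[z]$ by quotienting the $\T^d$-isotropy of $z$.

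For the first step I would use Lemma~\ref{preimagecone} together with the identity $\sigma^*\circ\beta^*=\id_{\fg^*}$, which holds because $\sigma$ is a section of $\beta$. Combined with the defining relation $\phi_G([z])=\sigma^*(\phi(z))$, this forces $\phi(z)=\beta^*(\eta)$, where $\eta=\phi_\alpha(p)=\phi_G([z])$. Evaluating against $e_i$ yields $|z_i|^2=\langle\beta^*(\eta),e_i\rangle=\langle\eta,v_i\rangle$, so the condition $\eta(v_i)=0$ for all $i\in I$ is equivalent to $z_i=0$ for all $i\in I$.

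For the second step I would first observe that under the standard diagonal $\T^d$-action on $\C^d$, an element $\xi\in\R^d$ acts infinitesimally by $(\xi\cdot z)_j=2\pi i\,\xi_j z_j$, so the Lie algebra isotropy of $z$ in $\R^d$ is precisely $\spann\{e_i : i\in I\}$. To descend to the quotient $W_C$, I would argue that $\bar\xi\in\fg$ fixes $[z]$ if and only if any lift $\xi\in\R^d$ moves $z$ tangentially to the $K$-orbit, i.e.\ $\xi\in\fk+\spann\{e_i:i\in I\}$. Applying $\beta$, which vanishes on $\fk$ and sends $e_i$ to $v_i$, then gives $\fg_p=\spann\{v_i:i\in I\}$, as claimed. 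The main obstacle is the careful descent of the isotropy from $z\in\C^d$ to $[z]\in W_C$ through the symplectic quotient by $K$; once this is set up against the exact sequence $0\to\fk\to\R^d\to\fg\to 0$, the remaining content is a direct linear algebra manipulation.
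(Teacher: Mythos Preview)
Your proposal is correct and follows essentially the same route as the paper: both lift $p$ to $z\in\phi_K^{-1}(0)$, use Lemma~\ref{preimagecone} to obtain $|z_i|^2=\langle\eta,v_i\rangle$, identify the $\T^d$-isotropy of $z$ as $\spann\{e_i:i\in I\}$, and then descend through the quotient by $K$. The only cosmetic difference is that the paper phrases the descent via the section $\sigma$ (checking $\sigma_*(v_j)=e_j+k$ with $k\in\fk$), whereas you phrase it via the exact sequence and the identity $\fg_p=\beta(\fk+\spann\{e_i:i\in I\})$; these are equivalent.
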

			\begin{proof}
				Lerman's construction implies that $M\cong M_C$ and that $C=\phi_G(W_C)$. Therefore $\eta(v_i)=0$ if and only if $\phi_G(p)(v_i)=0$, where we are considering $p$ as an element of $W_C$ via the inclusion $M\cong M_C\subset W_C$. From Lemma \ref{preimagecone} we have that $\phi_K^{-1}(0)=\phi^{-1}(\beta^*(C))$.
				Let $z=(z_1,\ldots,z_d)\in \phi^{-1}_K(0)$ be such that $\beta^*(\eta)=\phi(z)$. Then
				\begin{equation*}
				    \left|z_j\right|^2=\left<\phi(z),e_j\right>=\left<\beta^*(\eta),e_j\right>=\left<\eta,\beta(e_j)\right>=\left<\eta,v_j\right>.
				\end{equation*}
				Therefore $z_j=0$ if and only if  $\eta(v_j)=0$.
				The torus $\T^d$ acts on $\phi^{-1}_K(0)\subset \C^d$ via the standard action and $\eta_(v_j)=0$ if and only if  $e_j\in \mathfrak{t}^d_z$. The $G$-action on $\phi_K^{-1}(0)$ is given by a section $\sigma$ of $\beta\colon \T^d\to G$. Since $K=\ker \beta$, we have $\sigma_*(v_j)=e_j+k$, where $k\in \fk$ and $\sigma_*$ is the Lie algebra map induced by $\sigma$. It follows that $\fg_p=\mathfrak{t}^d_z/\fk$, therefore $[\sigma_*(v_j)]=[e_j]\in \mathfrak{t}^d_z/\fk$. Since the $G$-action is given by the section $\sigma$, we have that $v_j\in \fg_p$ is equivalent to $[\sigma_*(v_j)]\in \mathfrak{t}^d_z/\fk$, and therefore $v_j\in \fg_p$ if and only if $\eta(v_j)=0$.
				Since $C$ is a good cone, the $v_j$ satisfying $\eta(v_j)=0$ form an integral basis of $\fg_p$.
			\end{proof}

			Next we consider the weights of the isotropy representations. First we need a lemma from \cite{delzant_hamiltoniens_1988}:
			\begin{lemma}
				\label{symplectic_weights_basis}
				Let $\rho\colon G\to GL(V)$ be a faithful representation of a torus $G$ preserving a symplectic form $\omega$ such that $\dim V=2\dim G$.
				If $\rho$ is faithful, then its weights form a basis of the weight lattice $\Z^*_G$ of $G$.
			\end{lemma}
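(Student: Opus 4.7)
The plan is to use a compatible complex structure to reduce the problem to a faithful unitary representation of $G$ on $\C^k$ with $k=\dim G$, then translate faithfulness into a statement about lattices via the exponential map.

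First, since $G$ is compact and preserves $\omega$, we can choose a $G$-invariant inner product on $V$ by averaging, and this together with $\omega$ determines a $G$-invariant $\omega$-compatible complex structure $J$. With respect to $J$, $V$ becomes a complex vector space of complex dimension $k=\dim G$, and $\rho$ takes values in the unitary group. We may then decompose $V$ as an orthogonal direct sum of $1$-dimensional complex $G$-weight spaces $V=\bigoplus_{i=1}^{k}V_{\lambda_i}$ with weights $\lambda_1,\dots,\lambda_k\in \Z_G^*$ listed with multiplicity. The symplectic hypothesis is crucial here: without it, a faithful real representation of a $k$-torus could force $\dim V=2k$ via pairs of conjugate weights $\pm \lambda_i$, producing only $k$ distinct weights up to sign and no natural basis structure.

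Next, I would encode the weights into the linear map $L\colon\fg\to\R^k$ defined by $L(v)=(\langle\lambda_1,v\rangle,\dots,\langle\lambda_k,v\rangle)$. The element $\exp(v)\in G$ acts on $V_{\lambda_i}$ by the scalar $e^{2\pi i\langle\lambda_i,v\rangle}$, so $\rho(\exp(v))=\id$ if and only if $L(v)\in\Z^k$. Faithfulness of $\rho$ therefore translates to the identity
\begin{equation*}
L^{-1}(\Z^k)=\Z_G.
\end{equation*}
In particular $\ker L\subset\Z_G$, a discrete lattice, so $\ker L=0$; since $\dim\fg=\dim\R^k=k$, $L$ is a linear isomorphism. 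The inclusion $L(\Z_G)\subset\Z^k$ is immediate, and the identity above upgrades this to the equality $L(\Z_G)=\Z^k$, so $L$ restricts to an isomorphism of lattices $\Z_G\xrightarrow{\sim}\Z^k$.

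Finally, dualising, $L^*\colon(\R^k)^*\to\fg^*$ is a linear isomorphism sending the dual lattice $(\Z^k)^*$ onto $\Z_G^*$, and by construction $L^*(e_i^*)=\lambda_i$. Hence $\{\lambda_1,\dots,\lambda_k\}$ is the image of the standard $\Z$-basis of $(\Z^k)^*$ under a lattice isomorphism and therefore forms a $\Z$-basis of $\Z_G^*$ (in particular the $\lambda_i$ are pairwise distinct). The only subtle point is the very first step, the existence of a $G$-invariant compatible complex structure, which is standard but essential to ensure that counting weights with multiplicity yields exactly $k=\dim G$ of them rather than $2k$; everything after that is a linear-algebra translation of faithfulness into a statement about dual lattices.
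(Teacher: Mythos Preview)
Your proof is correct. The paper does not actually prove this lemma; it simply attributes it to Delzant and states it without argument. Your direct proof---passing to a $G$-invariant $\omega$-compatible complex structure so that exactly $k=\dim G$ weights appear, encoding them in the linear map $L\colon\fg\to\R^k$, translating faithfulness into the lattice identity $L^{-1}(\Z^k)=\Z_G$, and then dualising---is the standard argument and fills in what the paper leaves as a citation. The one point worth making explicit is that $L(\Z_G)\subset\Z^k$ holds because the weights of a torus representation lie in $\Z_G^*$ by definition, which you use implicitly when upgrading $L^{-1}(\Z^k)=\Z_G$ to $L(\Z_G)=\Z^k$; otherwise everything is in order.
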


	We have the following specialisation of Lerman's local normal  form for the moment map \cite{lerman_contact_2001}, when restricted to a vertex.

			\begin{theorem}
				\label{equivariant_darboux_contact}
				Let $p\in M$ be such that $\phi_\alpha(p)$ is a vertex of the convex polytope $P=\phi_\alpha(M)$ and let $V=H_p$ be the fibre of the contact distribution on $p$. The isotropy group $G_p$ acts on $V$ preserving the symplectic form $d\alpha|_{V}$. Then
				\begin{equation*}
					\fg_p^\circ=\R\phi_\alpha(p) 	
				\end{equation*} 
				and we can choose a splitting
				\begin{equation*}
					\fg^*=\fg_p^\circ\oplus\fg_p^*=\R\phi_\alpha(p)\oplus \fg_p^*.
				\end{equation*}
				Let $i\colon \fg_p^*\to \fg^*$ be the corresponding embedding. Then there exists a $G$-invariant neighbourhood $U$ of the zero section $G\cdot[1,0]$ in
				\begin{equation*}
					N=G\times_{G_p} V
				\end{equation*}
				and an open $G$-equivariant embedding $\varphi\colon U\to M$ with $\varphi([1,0])=p$ and a $G$-invariant $1$-form $\alpha_N$ on $N$ such that
				\begin{enumerate}
					\item $\varphi^*\alpha=e^f\alpha_N$ form some function $f\in C^\infty(U)$;
					\item the $\alpha_N$-moment map $\phi_{\alpha_N}$ is given by
					\begin{equation*}
						\phi_{\alpha_N}([a,v])=\phi_\alpha(p)+i(\phi_V(v)),
					\end{equation*}
					where $\phi_V\colon V\to \fg^*$ is the moment map for the representation of $G_p$ on $V$.
				\end{enumerate}

				Consequently,
				\begin{equation*}
					\phi_\alpha\circ \varphi([a,v])=(e^f\phi_{\alpha_N})([a,v])=e^{f([a,v])}(\phi_\alpha(p)+i(\phi_V(v)))
				\end{equation*}
				for some $G$-invariant function $f$ on $N$.
			\end{theorem}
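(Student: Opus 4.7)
The plan is to combine three ingredients: a dimensional analysis identifying $\fg_p^\circ$ at a vertex, the equivariant slice theorem around the closed $G$-orbit through $p$, and an equivariant contact Gray/Moser argument that normalises the pulled-back contact form up to a conformal factor.

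I would first pin down $\fg_p^\circ$ using the good cone hypothesis. A vertex of $P=\mathcal H\cap C$ is the transverse intersection of $\mathcal H$ with a $1$-dimensional edge of $C$, and by the good cone condition this edge is the common intersection of exactly $n$ facets whose primitive inward normals extend to a $\Z$-basis of $\Z_G$. By Lemma \ref{isotropy_toric_contact} these $n$ normals generate $\fg_p$, so $\dim\fg_p=n$ and $\dim\fg_p^\circ=1$. For any $v\in\fg_p$ the induced vector field vanishes at $p$, hence $\langle\phi_\alpha(p),v\rangle=\alpha_p(v(p))=0$, so $\phi_\alpha(p)\in\fg_p^\circ$; and $\phi_\alpha(p)\neq 0$ because it lies on the characteristic hyperplane $\mathcal H=\{\eta(R)=1\}$. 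Thus $\fg_p^\circ=\R\phi_\alpha(p)$, and picking any $\widetilde R\in\fg$ with $\langle\phi_\alpha(p),\widetilde R\rangle=1$ yields a splitting $\fg=\R\widetilde R\oplus\fg_p$ whose dual is the asserted $\fg^*=\R\phi_\alpha(p)\oplus i(\fg_p^*)$.

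Next, the $G$-orbit $\mathcal O=G\cdot p\cong G/G_p$ is $1$-dimensional (a closed Reeb orbit), and the Reeb direction $R_\alpha(p)$ is transverse to $V=H_p$, so $V$ is a $G_p$-invariant complement to $T_p\mathcal O$ in $T_pM$. Since $G$ preserves $\alpha$, the $G_p$-representation on $V$ preserves the symplectic form $d\alpha|_V$, giving the representation-theoretic moment map $\phi_V\colon V\to\fg_p^*$. The equivariant tubular neighbourhood (slice) theorem produces a $G$-equivariant diffeomorphism $\varphi_0\colon U\to\mathcal U$ from a neighbourhood of the zero section of $N=G\times_{G_p}V$ onto a $G$-invariant neighbourhood of $\mathcal O$ in $M$, sending $[1,0]$ to $p$. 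To obtain the contact normal form I construct a model $G$-invariant $1$-form $\alpha_N$ on $N$ as follows: transport the value $\phi_\alpha(p)$ along the orbit via a $G$-invariant connection $1$-form $\theta$ on the principal $G_p$-bundle $G\to G/G_p$ dual to $\widetilde R$, and add the Liouville primitive $\lambda_V$ of $d\alpha|_V$ on the fibre, i.e.\ descend $\theta+\lambda_V$ from $G\times V$ to $N=G\times_{G_p}V$. A direct calculation, splitting an arbitrary $X\in\fg$ as $X=c\widetilde R+Y$ with $Y\in\fg_p$, shows that its $\alpha_N$-moment map satisfies $\langle\phi_{\alpha_N}([a,v]),X\rangle=c+\langle\phi_V(v),Y\rangle=\langle\phi_\alpha(p)+i(\phi_V(v)),X\rangle$, which is exactly item~(2).

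Finally I invoke an equivariant Gray stability argument: at $[1,0]$ both $\varphi_0^*\alpha$ and $\alpha_N$ are $G$-invariant contact forms with kernel $V$, hence differ by a positive scalar there. Consider the interpolation $\alpha_t=(1-t)\alpha_N+t\varphi_0^*\alpha$, which is contact on a sufficiently small $G$-invariant neighbourhood of the zero section; solving the time-dependent Moser-type equation $\iota_{X_t}d\alpha_t+\dot{\alpha}_t=\mu_t\alpha_t$ with $\mu_t$ chosen so that $X_t$ lies in the contact distribution produces a $G$-equivariant isotopy $\psi_t$ fixing $\mathcal O$ pointwise and satisfying $\psi_1^*(\varphi_0^*\alpha)=e^f\alpha_N$ for a $G$-invariant function $f$. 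Taking $\varphi=\varphi_0\circ\psi_1$ yields item~(1), and the consequence $\phi_\alpha\circ\varphi=e^f\phi_{\alpha_N}$ follows from the conformal scaling property $\phi_{e^f\alpha_N}=e^f\phi_{\alpha_N}$ of the contact moment map. The main technical point is carrying out the equivariant Gray step in a way that simultaneously keeps $\psi_t$ $G$-equivariant and fixes the orbit $\mathcal O$ pointwise; this is achieved by noting that $G$-invariance of $\alpha_t$ and $\mu_t$ forces $X_t$ to be $G$-invariant, and that $X_t$ vanishes on $\mathcal O$ since $\dot\alpha_t$ does.
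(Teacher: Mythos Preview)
The paper does not supply its own proof of this theorem: it is introduced as ``the following specialisation of Lerman's local normal form for the moment map \cite{lerman_contact_2001}, when restricted to a vertex,'' and no argument is given beyond that citation. Your proposal therefore cannot be compared against a proof in the paper; rather, you are reconstructing the argument behind a quoted result.

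That said, your outline is the standard route and is essentially what underlies Lerman's normal form: identify $\fg_p^\circ$ via the good-cone combinatorics and Lemma~\ref{isotropy_toric_contact}, build the model on $N=G\times_{G_p}V$ using the slice theorem and a linear Liouville primitive on the symplectic slice, and then run an equivariant Gray--Moser isotopy. One small imprecision: in the final step you assert that $X_t$ vanishes on $\mathcal O$ ``since $\dot\alpha_t$ does.'' In fact $\dot\alpha_t=\varphi_0^*\alpha-\alpha_N$ need not vanish on $\mathcal O$; what you actually have is that both forms annihilate $H$ along $\mathcal O$, so $\dot\alpha_t$ is a pointwise multiple of $\alpha_t$ there. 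It is the \emph{choice} of $\mu_t$ that makes $\mu_t\alpha_t-\dot\alpha_t$ vanish along $\mathcal O$, after which non-degeneracy of $d\alpha_t|_H$ forces $X_t|_{\mathcal O}=0$. With that correction the sketch is sound.
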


			The following corollary shows how the isotropy weights relate to the moment cone.
			\begin{corollary}
				\label{local_toric_contact}
				Let $p\in M$ be such that $\phi_\alpha(p)$ is a vertex of the convex polytope $P=\phi_\alpha(M)$. Then the representation $G_p\to GL(V)$ is faithful, and the weights of the action of the isotropy group $G_p$ on $V=H_p$, the fibre of the contact distribution at $p$, form a basis of the weight lattice $\Z_{G_p}^*$ of $\fg_p^*$ that is dual to the basis $\{v_1^p,\ldots,v_n^p\}$ of $\fg_p$, where $v_1^p,\ldots,v_n^p$ are the normals to the faces of the moment cone $C(\phi_\alpha)$ meeting at $\phi_\alpha(p)$.
			\end{corollary}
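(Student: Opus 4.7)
My plan is to run the argument in three steps using the local normal form of Theorem~\ref{equivariant_darboux_contact}, Lemma~\ref{isotropy_toric_contact}, and Lemma~\ref{symplectic_weights_basis}. First I would establish faithfulness. Since the equivariant embedding $\varphi\colon U \to M$ identifies a neighbourhood of the zero section in $N = G \times_{G_p} V$ with an open subset of $M$, and the $G$-action on $M$ is effective, the induced action on $N$ is effective. Writing the $G$-action on $N$ as $h\cdot[a,v]=[ha,v]$, a direct computation shows that $h$ fixes $[1,v]$ for every $v \in V$ if and only if there is $g \in G_p$ with $h = g^{-1}$ and $g$ acts trivially on $V$. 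Effectiveness then forces the representation $G_p \to \GL(V)$ to be faithful. Applying Lemma~\ref{symplectic_weights_basis} to this faithful symplectic representation of dimension $2n = 2\dim G_p$ shows that the weights $\{w_1, \ldots, w_n\}$ form a $\Z$-basis of $\Z_{G_p}^*$.

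Next I would identify this basis with the dual of $\{v_1^p, \ldots, v_n^p\}$. By Lemma~\ref{isotropy_toric_contact} these $n$ normals span $\fg_p$, and because $\phi_\alpha(p)$ is a vertex of $P$ the corresponding edge of $C$ is a codimension-$n$ face; the good cone condition then ensures $\{v_1^p, \ldots, v_n^p\}$ extends to an integral basis of $\Z_G$, and hence is itself an integral basis of $\Z_{G_p} = \Z_G \cap \fg_p$. Diagonalising the symplectic $G_p$-representation on $V$ gives $\phi_V(v) = \sum_{j=1}^n |v_j|^2 w_j$, so $\phi_V(V) = \R_{\geq 0}\langle w_1, \ldots, w_n\rangle \subset \fg_p^*$. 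On the other hand, the local form $\phi_\alpha \circ \varphi([a,v]) = e^{f([a,v])}(\phi_\alpha(p) + i(\phi_V(v)))$ together with a short scaling argument shows that the tangent cone of $\phi_\alpha(M)$ at $\phi_\alpha(p)$ equals $\R\phi_\alpha(p) \oplus i(\phi_V(V))$. Since this tangent cone must also agree with the tangent cone of $C$ at $\phi_\alpha(p)$, which under the splitting $\fg^* = \R\phi_\alpha(p) \oplus \fg_p^*$ is $\R\phi_\alpha(p)\oplus \{w \in \fg_p^* : w(v_i^p)\geq 0 \fa i\}$, we obtain
\[
\R_{\geq 0}\langle w_1, \ldots, w_n\rangle = \{w \in \fg_p^* : w(v_i^p)\geq 0 \fa i\}.
\]

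Combining these facts, $\R_{\geq 0}\langle w_1, \ldots, w_n\rangle$ is a simplicial rational cone whose primitive lattice generators are uniquely determined by its extremal rays. Since $\{w_i\}$ is a $\Z$-basis of $\Z_{G_p}^*$ lying on those rays, each $w_i$ must be the primitive lattice vector on the corresponding ray, which by integrality of $\{v_j^p\}$ is precisely the dual basis element to $v_{\sigma(i)}^p$ for some permutation $\sigma$. The step I expect to require most care is identifying the local image of $\phi_\alpha$ at $\phi_\alpha(p)$ with the cone $\{y : y(v_i^p)\geq 0\}$: this rests on the splitting $\fg^* = \R\phi_\alpha(p) \oplus \fg_p^*$ from Theorem~\ref{equivariant_darboux_contact} together with the observation that the facets of $C$ not containing $\phi_\alpha(p)$ impose only strict inequalities in a neighbourhood of the vertex and hence drop out locally.
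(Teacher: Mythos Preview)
Your proof is correct and follows the same strategy as the paper's: faithfulness from effectiveness of the $G$-action on the local model $N=G\times_{G_p}V$, Lemma~\ref{symplectic_weights_basis} to get that the weights are a lattice basis, and then a comparison of the cone $\phi_V(V)=\sum_i \R_{\ge 0}w_i$ with the cone cut out by the normals $v_i^p$ via the local moment map formula of Theorem~\ref{equivariant_darboux_contact}; the paper is simply terser at this last step, writing the image of $\phi_V$ and passing directly to $i(w_p^j)(v_k^p)=\delta_{jk}$ without spelling out the cone identification you make explicit. One minor wording slip to fix: the set whose tangent cone at $\phi_\alpha(p)$ is $\R\phi_\alpha(p)\oplus i(\phi_V(V))$ is the moment cone $C$, not the polytope $\phi_\alpha(M)=P$ (which sits inside the $n$-dimensional characteristic hyperplane $\mathcal H$), but the identity $\R_{\ge 0}\langle w_1,\ldots,w_n\rangle=\{w\in\fg_p^*: w(v_i^p)\ge 0\}$ you extract from it is correct regardless.
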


			\begin{proof}
				Theorem \ref{equivariant_darboux_contact} ensures that there is a neighbourhood of $G\cdot p$ that is $G$-equivariantly diffeomorphic to a neighbourhood of the zero section of $N=G\times_{G_p} V$. Since the action of $G$ on $N$ is effective, the representation of $G_p$ on $V$ must be faithful.
				The image of the moment map $\phi_V$ is
				\begin{equation*}
					\phi_V(V)=\left\{\sum_{i=1}^ns_iw^i_p \ | \ s_i\geq 0\right\}\subset \fg_p^*,
				\end{equation*}
				where the $w^i_p$ are the weights of the isotropy action of $G_p$ on $V=H_p$. Therefore the weights $w^i_p$ generate the edges of the cone $\phi_V(V)$. 
				Lemma \ref{symplectic_weights_basis} shows that the weights $w^i_p$ form a basis of the integral lattice of $\fg^*_p$. It follows that the weights $i(w^j_p)\in \fg^*$ satisfy $i(w^j_p)(v_k^p)=\delta_{jk}$. This implies that $w^j_p(v_k^p)=\delta_{jk}$, where we are viewing the normals $v_1^p,\ldots,v_n^p$ as elements of $\fg_p$. Thus $w^1_p,\ldots,w^n_p\in \fg^*_p$ is the dual basis to $v_1^p,\ldots,v_n^p$.
			\end{proof}

			\begin{remark}
				Corollary \ref{local_toric_contact} allows us to read the weights of the isotropy representation $G_p\to GL(V)$ from the moment cone; one simply needs to choose a vector $v_0^p$ completing the set $\{v_1^p,\ldots,v_n^p\}$ to an integral basis of the lattice $\Z_G\subset \fg$.
			\end{remark}
			
	\subsection{The horizontal Dolbeault operator}\label{dolbeault}
		Let $M$ be a toric contact manifold of Reeb type with $\dim M > 3$ endowed with a Sasakian structure  $(\alpha, R_\alpha, \Phi, g)$. The transverse complex structure $J=\Phi|_H$ allows us to introduce a horizontal Dolbeault operator $\dolbeault_H$, and more generally $\dolbeault_H^E$ twisted by a transverse holomorphic bundle $E$. We briefly recall the definitions.

			Let $\Omega^k_H(M) = \{ \omega \in \Omega^k(M) \ | \ \iota_{R_\alpha} \omega = 0 \} = \Gamma(M, \bigwedge^k H^*)$ denote the space of {\em horizontal} $k$-forms. The projection operators
	    	$P_V=\alpha\wedge\iota_{R_\alpha}$ and  $P_H=1-P_V$
	    determine a splitting 
	    \begin{equation*}
	    	\Omega^*(M)=\Omega^*_V(M)\oplus\Omega_H^*(M)=P_V(\Omega^*(M))\oplus P_H(\Omega^*(M))
	    \end{equation*}
	    into horizontal and vertical forms, and $d_H =P_H\circ d$ is a differential on $\Omega^k_H(M)$. The transverse complex structure gives the usual decomposition of $\Omega^*_H(M)\otimes \C$ into horizontal $(p,q)$-forms and defines the horizontal Dolbeault complex:

	    \begin{equation*}
	        0\to\Omega^{0,0}_H(M)\xrightarrow{\dolbeault_H}\Omega^{0,1}_H(M)\xrightarrow{\dolbeault_H}\cdots\xrightarrow{\dolbeault_H}\Omega_H^{0,n}(M)\to 0,
	    \end{equation*}
	with the associated symbol complex 
	    \begin{equation*}
	    	0\to\pi^*(\textstyle{\bigwedge}^{\!0,0} H^*)\xrightarrow{\sigma(\dolbeault_H)}\pi^*(\textstyle{\bigwedge}^{\!0,1} H^*)\xrightarrow{\sigma(\dolbeault_H)}\cdots\xrightarrow{\sigma(\dolbeault_H)}\pi^*(\textstyle{\bigwedge}^{\!0,n} H^*)\to 0,
	    \end{equation*}
	    where $\pi\colon T^*M\to M$ is the projection and
	    $		\sigma(\dolbeault_H)_{(p,\xi)}(\bar{w})=\xi_H^{\!0,1}\wedge \bar{w},$
	    	where $(p,\xi)\in T^*M$, $\bar{w}\in \textstyle{\bigwedge}^{\!0,k} H^*|_p$ and $\xi_H^{\!0,1}=P_H(\xi)^{0,1}$ is the $(0,1)$-component of the horizontal projection of $\xi$.

	    	A $G$-equivariant vector bundle $\pi_E\colon E\to M$ is transversally holomorphic if with respect to an open cover $\left\{U_\alpha\right\}$ of $M$, it is determined  by transition functions $g_{\alpha\beta}$ satisfying $\dolbeault_H(g_{\alpha\beta})=0.$ The twisted horizontal Dolbeault operator $\dolbeault_H^E$ is   given locally by
	    	\begin{align*}
	    		\dolbeault_H^E\colon \Omega^{0,k}_H(M,E)&\to \Omega^{0,k+1}_H(M,E)\\
	    		w\otimes u_\alpha&\mapsto \dolbeault_H(w)\otimes u_\alpha.
	    	\end{align*}
	and we have the twisted horizontal Dolbeault complex
	    \begin{equation*}
	    	0\to\Omega^{0,0}_H(M,E)\xrightarrow{\dolbeault_H^E}\Omega^{0,1}_H(M,E)\xrightarrow{\dolbeault_H^E}\cdots\xrightarrow{\dolbeault_H^E}\Omega_H^{0,n}(M,E)\to 0,
		\end{equation*}
	with the symbol 
	 $   		\sigma(\dolbeault_H^E)_{(p,\xi)}(\bar{w}\otimes \bar{e})=\xi_H^{\!0,1}\wedge \bar{w}\otimes\bar{e},
	  $
	    	where  $\bar{w}\otimes\bar{e}\in \textstyle{\bigwedge}^{\!0,k} H|_p\otimes E|_p$.

\section{Equivariant Index}
	\label{index}
	\subsection{Transversally elliptic operators}
		In this section, we collect  relevant facts about transversally elliptic  operators \cite{atiyah_elliptic_1974,paradan_witten_2019}.

		Let $G$ be a compact Lie group and $M$ a compact $G$-manifold. We  denote by $\pi\colon T^*M\to M$ the natural projection. A complex of $G$-invariant pseudodifferential operators  is called transversally elliptic, if it is elliptic in the directions transversal to $G$-orbits.  	More precisely, let $T_G^* M \subset T^*M$ be the closed subset defined by the union of conormals to the $G$-orbits, 
		\begin{equation*}
			T_G^*M=\{(p,\xi)\in T^*M \ | \ \xi(v(p))=0\text{ for all }v\in \fg\}.
		\end{equation*}
			A complex of $G$-invariant pseudodifferential operators $P$ is a sequence
			\begin{equation*}
				0\to \Gamma(E^0)\xrightarrow{P_1}\Gamma(E^1)\xrightarrow{P_2}\cdots\xrightarrow{P_{n}}\Gamma(E^n)\to 0,
			\end{equation*}
			where the $P_i$ are $G$-invariant pseudodifferential operators and $\Gamma(E^i)$ is the  space of sections of the $G$-vector bundle $E^i$, $i=1,\ldots,n$.
	Its symbol  $\sigma_P$ on $T^*M$ is given by the complex
				\begin{equation*}
					0\to \pi^*E^0\xrightarrow{\sigma_1}\pi^*E^1\xrightarrow{\sigma_2}\cdots\xrightarrow{\sigma_{n}}\pi^*E^n\to 0,
				\end{equation*}
				where $\sigma_i=\sigma(P_i)$ is the symbol of the pseudodifferential operator $P_i$. Let  $$\text{Char}(\sigma_P)=\{(p,\xi)\in T^*M \ | \ \sigma_P(p,\xi) \text{ is not exact}\}$$ denote the characteristic set of the complex.
			\begin{definition}
				A complex of $G$-invariant pseudodifferential operators $P$ is {\em $G$-transversally elliptic} if  $\text{Char}(\sigma_P)\bigcap T^*_G M$ is compact.
			\end{definition}

			A $G$-transversally elliptic symbol $\sigma_P$ defines an element $[\sigma_P]\in K_G^*(T^*_G M)$. Conversely, given a symbol class $[\sigma]\in K_G^*(T^*_G M)$, there is a $G$-transversally elliptic pseudodifferential operator $A\colon \Gamma(M,E)\to \Gamma(M,F)$ such that $\sigma(A)=\sigma$. 
			 Let $\hat{G}$ be the set of isomorphism classes of irreducible complex representations of $G$ and $\hat{R}(G)$ be the space of $\Z$-valued functions on $\hat{G}$.  The elements $V\in \hat{R}(G)$ are thus infinite series
				\begin{equation*}
					V=\sum_{\mu\in \hat{G}} m(\mu)\chi_{V_\mu}
				\end{equation*}
				with $m(\mu)\in\Z$.		
			The kernel of $A$ is not finite-dimensional, but it is shown in \cite{atiyah_elliptic_1974} that for every $\mu\in \hat{G}$ the space $\Hom_G(V_{\mu},\ker(A))$ is a finite dimensional vector space of dimension $m(V_\mu, A)$. The integer $m(V_\mu, A)-m(V_\mu, A^*)$ depends only on the class of the symbol $\sigma(A)$ in $K_G^*(T^*_G M)$ and the index of $\sigma$ is defined by
		    \begin{equation*}
		        \ind^M_G(\sigma)=\sum_{\mu\in \hat{G}} (m(V_\mu, A)-m(V_\mu, A^*))\chi_{V_\mu},
		    \end{equation*}
	where the adjoint $A^*$ of $A$ is also a $G$-transversally elliptic pseudo-differential operator and defined by choosing a $G$-invariant metric.
		    Atiyah showed in \cite{atiyah_elliptic_1974} that $\ind_G(\sigma)$ defines a distribution on $G$. The index depends only on the symbol class $[\sigma]\in K_G^*(T^*_G M)$ and descends to a map
		    \begin{equation*}
		        \ind^M_G\colon K_G^*(T^*_G M)\to \hat{R}(G).
		    \end{equation*}

	The following basic example will be important in the sequel.
			\begin{example}
				\label{zero_operator}
				Let $M=S^1$ with $S^1$ acting on $M$ by left translation. Let $E=M\times \C$ be the trivial line bundle and $F=M\times \{0\}$. Then $\Gamma(M,E)=C^\infty(S^1)$ and $\Gamma(M,F)=0$.
				The operator $D=0\colon \Gamma(M,E)\to \Gamma(M,F)$ has symbol
				\begin{align*}
					\sigma_D\colon \pi^*E&\to \pi^*F\\
					(\xi,e)&\mapsto (\xi,0).
				\end{align*}
				Since $T^*_{S^1} M= M\times \{0\}$, the operator $D$ is $S^1$-transversally elliptic.
				The index of $\sigma_D$ is
				\begin{equation*}
					\ind^M_{S^1}(\sigma_D)(t)=\sum_{n\in\Z} \chi_{V_n}(t)=\sum_{n\in \Z} t^n,
				\end{equation*}
				where $V_n=\C$ is the $S^1$-module with the action $t\cdot z=t^nz$.
			\end{example}
	Let $M$ be a $(2n+1)$-dimensional toric contact manifold of Reeb type, $n>1$,  with an invariant Sasakian structure. Let $\dolbeault_H^E$ be the twisted horizontal Dolbeault complex on $M$ as defined in Section \ref{dolbeault}.  
			\begin{proposition}
				\label{dolbeault_is_transv}
			 $\sigma(\dolbeault_H^E)$ is a $G$-transversally elliptic symbol.
			\end{proposition}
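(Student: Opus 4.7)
The plan is to show that $\mathrm{Char}(\sigma(\dolbeault_H^E)) \cap T^*_G M$ reduces to the zero section of $T^*M$, which is compact since $M$ is.

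First, I would identify the characteristic set. The symbol is exterior multiplication by $\xi_H^{0,1}$ on $\pi^*(\bigwedge^{0,\bullet} H^*) \otimes \pi^*E$. This is a Koszul-type complex (tensored with the fixed vector bundle $\pi^*E$), so it is exact at every point where $\xi_H^{0,1} \neq 0$. Conversely, writing the real horizontal covector $\xi_H = P_H(\xi) \in H^*$ as $\xi_H = \xi_H^{1,0} + \xi_H^{0,1}$ with $\xi_H^{1,0} = \overline{\xi_H^{0,1}}$, the vanishing of $\xi_H^{0,1}$ forces $\xi_H = 0$. Hence
\begin{equation*}
\mathrm{Char}(\sigma(\dolbeault_H^E)) = \{(p,\xi) \in T^*M \ | \ P_H(\xi) = 0\} = \{(p,\xi) \in T^*M \ | \ \xi = c\,\alpha_p,\ c \in \R\}.
\end{equation*}

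Next I would use the Reeb-type hypothesis to intersect with $T^*_G M$. By assumption, $R_\alpha$ is generated by some $R \in \fg$, so $R(p) = R_\alpha(p)$ for every $p \in M$. If $(p,\xi) \in \mathrm{Char}(\sigma(\dolbeault_H^E)) \cap T^*_G M$, then $\xi = c\,\alpha_p$ and the covector vanishes on all fundamental vector fields; evaluating on $R(p)$ gives $c\,\alpha_p(R_\alpha(p)) = c = 0$. Therefore the intersection is exactly the zero section $\{(p,0) \ | \ p \in M\}$, which is homeomorphic to $M$ and hence compact. By definition, this makes $\sigma(\dolbeault_H^E)$ a $G$-transversally elliptic symbol.

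The only subtlety — and really the only place where any assumption of the theorem is used — is the reduction $\xi = c\,\alpha_p$ forces $c = 0$: this is where the Reeb-type condition is essential, and is the reason the horizontal Dolbeault complex is not merely transversally elliptic to the Reeb foliation but in fact $G$-transversally elliptic. Everything else is a routine verification once the symbol is written down.
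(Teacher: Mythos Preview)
Your proof is correct and follows essentially the same approach as the paper's own proof: identify the characteristic set as the real span of $\alpha$, then use the Reeb-type hypothesis to pair $\xi$ against the fundamental vector field $R(p)=R_\alpha(p)$ and conclude that only $\xi=0$ survives. Your version is in fact slightly more carefully worded at the end, since you explicitly identify the intersection with the compact zero section rather than leaving this implicit.
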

			\begin{proof}	
				Recall that the symbol $\sigma_k(\dolbeault_H^E)\colon \pi^*(\textstyle{\bigwedge}^{\!0,k} H^*\otimes E)\to \pi^*(\textstyle{\bigwedge}^{\!0,k+1} H^*\otimes E)$ is given by
				\begin{equation*}
			    	\sigma_k(\dolbeault_H^E)_{(p,\xi)}(\bar{w}\otimes \bar{e})=\xi_H^{\!0,1}\wedge \bar{w}\otimes\bar{e},
			    \end{equation*}
			    where $(p,\xi)\in T^*M$, $\bar{w}\otimes\bar{e}\in \textstyle{\bigwedge}^{\!0,k} H|_p\otimes E|_p$. The complex $\sigma(\dolbeault_H^E)_{(p,\xi)}$ is exact so long as $\xi_H^{\!0,1}\neq 0$, that is when $\xi$ is not a multiple of the contact form $\alpha$.
			    Since $M$ is of Reeb type, a vector $R\in\fg$ generates the Reeb vector field. We have
			    \begin{equation*}
			    	(p,\xi)\in T^*_G M \implies \xi(v(p))=0\text{ for all $v\in \fg$}.
			    \end{equation*}
			    Since the action of $G$ generates the Reeb vector field, we have $\xi(R(p))=0$. By $\alpha_p(R(p))=1$, it follows that $\xi$ is not a multiple of $\alpha_p$ and therefore $\sigma(\dolbeault_H^E)_{(p,\xi)}$ is exact.
						
			\end{proof}

	We conclude by recalling the multiplicative and excision properties of the index \cite{atiyah_elliptic_1974}. 
			Consider a compact Lie group $G_2$ acting on two manifolds $M_1$ and $M_2$ and assume that another compact Lie group $G_1$ acts on $M_1$ commuting with the action of $G_2$. The exterior product of vector bundles induces a multiplication map
			\begin{equation}
				\label{kmultiplication}
			 	\boxtimes\colon K_{G_1\times G_2}(T^*_{G_1} M_1)\otimes K_{G_2}(T^*_{G_2} M_2)\to K_{G_1\times G_2}(T^*_{G_1\times G_2} (M_1\times M_2)).
			 \end{equation}
	 
			 \begin{theorem}[Multiplicative Property]
			 	 \label{multiplicative_property}
			 	For any $\sigma_1\in K_{G_1\times G_2}(T^*_{G_1} M_1)$ and any $\sigma_2\in K_{G_2}(T^*_{G_2} M_2)$, we have
			 	\begin{equation*}
			 		 		\ind_{G_1\times G_2}^{M_1\times M_2}(\sigma_1\boxtimes\sigma_2)=\ind_{G_1\times G_2}^{M_1}(\sigma_1)\ind_{G_2}^{M_2}(\sigma_2).
			 	\end{equation*}
			 \end{theorem}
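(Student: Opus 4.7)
The strategy is to realize $\sigma_1,\sigma_2$ by honest pseudodifferential operators, assemble them into a product operator on $M_1\times M_2$ whose symbol represents $\sigma_1\boxtimes\sigma_2$, and compute its index by isotypical decomposition. First I would fold each symbol complex into a two-term $\Z/2$-graded complex, and choose $(G_1\times G_2)$-invariant resp.\ $G_2$-invariant pseudodifferential operators $A_1,A_2$ realizing $\sigma_1,\sigma_2$. On $M_1\times M_2$ I form the graded exterior tensor product
$$A \defeq A_1\otimes \id + \epsilon_1\otimes A_2,$$
where $\epsilon_1$ is the $\Z/2$-grading involution on the $M_1$-bundle. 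A direct principal-symbol calculation gives $[\sigma(A)]=\sigma_1\boxtimes\sigma_2$ in $K_{G_1\times G_2}(T^*(M_1\times M_2))$.

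Next I would verify $(G_1\times G_2)$-transverse ellipticity of $A$. If the symbol fails to be invertible at a point $(p_1,p_2;\xi_1,\xi_2)\in T^*_{G_1\times G_2}(M_1\times M_2)$, the K\"unneth formula for graded tensor products of complexes forces $(p_i,\xi_i)\in \text{Char}(\sigma_i)$ for both $i$. Restricting the defining condition of $T^*_{G_1\times G_2}(M_1\times M_2)$ to the two summands of $\fg_1\oplus \fg_2$ shows $(p_1,\xi_1)\in T^*_{G_1}M_1$, and the residual freedom in $\xi_2$ is bounded because $(p_1,\xi_1)$ already lies in the compact set $\text{Char}(\sigma_1)\cap T^*_{G_1}M_1$ and $\sigma_2$ is invertible outside a compact subset of $T^*_{G_2}M_2$.

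The core step is the isotypical index calculation. Since each $A_i$ is transversally elliptic, $\ker A_i$ and $\text{coker}\,A_i$ decompose into isotypical components of finite multiplicity. Separating variables in the eigenspace decomposition of the non-negative operator $A^*A+AA^*$ on $M_1\times M_2$, its eigensections factor as tensor products of eigensections of $A_1^*A_1+A_1A_1^*$ and $A_2^*A_2+A_2A_2^*$. Consequently, at the level of formal virtual characters,
$$[\ker A]-[\text{coker}\,A] \;=\; \bigl([\ker A_1]-[\text{coker}\,A_1]\bigr)\cdot\bigl([\ker A_2]-[\text{coker}\,A_2]\bigr),$$
where the product is taken in $\hat R(G_1\times G_2)$ by pulling the second factor back along the projection $G_1\times G_2\to G_2$. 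Collecting by isotype yields the stated formula.

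The main obstacle is ensuring the right-hand product makes sense as a distribution rather than merely as a formal character sum, since $\hat{G_2}$ is infinite. The key observation is that for any fixed $(\mu_1,\mu_2)\in \widehat{G_1\times G_2}$ only finitely many pairs $(\nu,\tau)\in \hat{G_2}\times \hat{G_2}$ satisfy $V_{\mu_2}\subset V_\nu\otimes V_\tau$, so each isotypical coefficient of the product is a finite sum. Upgrading this to convergence of the two distributions in $C^{-\infty}(G_1\times G_2)$ is the delicate analytic content of the multiplicative property in Atiyah \cite{atiyah_elliptic_1974}, and this is the step that essentially uses $\sigma_1\in K_{G_1\times G_2}(T^*_{G_1}M_1)$ rather than merely $K_{G_1}(T^*_{G_1}M_1)$.
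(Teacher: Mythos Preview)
The paper does not prove this theorem; it is stated as a foundational result and attributed to Atiyah \cite{atiyah_elliptic_1974} without argument. Your sketch is therefore not competing with anything in the paper, and in outline it follows Atiyah's own approach: realise the symbols by operators, form the graded exterior product, and compute the index isotype by isotype.

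Two places in your sketch deserve tightening. In the transverse-ellipticity check, restricting to $\fg_1$ correctly gives $(p_1,\xi_1)\in T^*_{G_1}M_1$, but restricting to $\fg_2$ only yields $\xi_2(v_2(p_2))=-\xi_1(v_2(p_1))$, which does \emph{not} place $(p_2,\xi_2)$ in $T^*_{G_2}M_2$; your clause ``$\sigma_2$ is invertible outside a compact subset of $T^*_{G_2}M_2$'' is precisely the hypothesis, but it only controls $\xi_2$ once you know $\xi_2$ is conormal to the $G_2$-orbit, which you do not. The fix is either a homotopy bringing $\xi_2$ into $T^*_{G_2}M_2$ (using that its orbit-tangential component is bounded), or to set up the product directly at the level of $K$-theory with supports on $T^*_{G_1}M_1\times T^*_{G_2}M_2$ and then push forward along the closed inclusion into $T^*_{G_1\times G_2}(M_1\times M_2)$, as Atiyah does. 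Second, the separation-of-variables argument via the spectrum of $A^*A+AA^*$ is heuristic: for merely transversally elliptic operators these Laplacians need not have discrete spectrum, so one cannot literally diagonalise and tensor eigenspaces. Atiyah's argument instead works multiplicity by multiplicity after first restricting to a fixed $G_1$-isotype (which renders $A_1$ genuinely elliptic on that isotypical subspace), and this is exactly where the hypothesis $\sigma_1\in K_{G_1\times G_2}(T^*_{G_1}M_1)$ rather than $K_{G_1\times G_2}(T^*_{G_1\times G_2}M_1)$ is used---a point you identify correctly at the end.
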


			 \begin{theorem}[Excision Property]
			 	Let $j\colon U\to M$ be an open $G$-embedding into a compact $G$-manifold $M$. We have a pushforward map $j_*\colon K_G(T_G^* U)\to K_G(T_G^*M)$ and the composition
			 	\begin{equation*}
			 		K_G(T_G^* U)\xrightarrow{j_*} K_G(T_G^*M)\xrightarrow{\ind^M_G} \hat{R}(G)
			 	\end{equation*}
			 	is independent of $j\colon U\to M$.
			 \end{theorem}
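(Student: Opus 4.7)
The pushforward is constructed as follows. The derivative $dj\colon T^*U \to T^*M$ of an open $G$-equivariant embedding is itself an open $G$-equivariant embedding, and it restricts to an open embedding of the locally compact $G$-spaces $T^*_G U \hookrightarrow T^*_G M$. Since equivariant topological $K$-theory with compact supports is covariantly functorial under open embeddings via extension by zero on symbol representatives, this furnishes the desired map $j_*\colon K_G(T^*_G U)\to K_G(T^*_G M)$.

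To prove that $\ind^M_G \circ j_*$ is independent of $j$, I would first fix a class $\sigma \in K_G(T^*_G U)$, represent it by a symbol with compact characteristic set $C \subset T^*_G U$, and let $K = \pi(C) \subset U$ denote its compact $G$-invariant projection. A standard cutoff argument shows that $\sigma$ admits an equivalent representative supported in any prescribed $G$-invariant tubular neighbourhood $N$ of $K$ in $U$, so $j_*\sigma$ depends only on the germ of $j$ along $K$.

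Given two open $G$-embeddings $j_0, j_1\colon U\to M$, shrink $N$ so that both $j_i|_N$ are embeddings onto $G$-invariant tubes of $j_i(K)$ in $M$. The equivariant tubular neighbourhood theorem identifies each $j_i(N)$ with the total space of the normal bundle of $K$ in $U$, a datum intrinsic to $U$; slicing further over $G$-orbits $G/H \subset K$ produces local models of the form $G\times_H V$ for $H$-representations $V$. The multiplicative property (Theorem~\ref{multiplicative_property}) then expresses each local contribution to $\ind^M_G(j_{i*}\sigma)$ purely in terms of intrinsic symbol data on $V$.

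The hard part is that $j_0(N)$ and $j_1(N)$ may lie in disjoint regions of $M$, so equality of the two indices is not a consequence of homotopy invariance alone. The cleanest resolution is axiomatic: one verifies that the assignment $\sigma \mapsto \ind^M_G(j_*\sigma)$ satisfies the multiplicativity, normalisation and homotopy axioms that uniquely characterise an intrinsic equivariant index on $K_G(T^*_G U)$, so any two choices of $j$ produce the same value. This is the substance of Atiyah's original excision argument in \cite{atiyah_elliptic_1974}.
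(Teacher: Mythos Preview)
The paper does not prove this statement at all; it is simply recalled from Atiyah's monograph \cite{atiyah_elliptic_1974} as background material, alongside the multiplicative property, with no argument given. So there is no ``paper's own proof'' to compare against --- anything you write here is already more than the paper provides.

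That said, your sketch has a genuine gap in the middle. You let $K=\pi(C)$ be the projection of the characteristic set, which is merely a compact $G$-invariant \emph{subset} of $U$, not a submanifold. Invoking ``the equivariant tubular neighbourhood theorem'' for $K$ and speaking of ``the normal bundle of $K$ in $U$'' therefore does not make sense as stated, and the subsequent reduction to slices $G\times_H V$ and the multiplicative property is not justified. The tubular-neighbourhood/slice machinery applies to orbits or to smooth invariant submanifolds, not to arbitrary compact sets. Your first paragraph (construction of $j_*$ by extension by zero) and your final paragraph (deferring to Atiyah's axiomatic characterisation) are both fine, but the material in between does not carry any weight.

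If you want an honest proof rather than a citation, the cleanest route is the one Atiyah actually uses: given two embeddings $j\colon U\to M$ and $j'\colon U\to M'$, pass to the disjoint union $M\sqcup M'$ and observe that both pushforwards factor through $K_G(T^*_G(M\sqcup M'))$; additivity of the index over disjoint unions then reduces the question to comparing two embeddings into the \emph{same} ambient manifold, where one can build a single pseudodifferential operator realising the class and argue analytically that its index is computed entirely on the support of the symbol. Alternatively, simply cite \cite{atiyah_elliptic_1974} as the paper does.
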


			 The product of a symbol $\sigma$ by a $G$-equivariant vector bundle $E$ is the symbol given by
			\begin{equation*}
				(\sigma \otimes E)(p,\xi)=\sigma(p,\xi)\otimes \Id_E.
			\end{equation*}
			Note that the symbol $\sigma(\dolbeault_H^E)$ is of this form. 
			 
			\begin{proposition}
			\label{trivial_twist}
				Let $\sigma\in K_G(T^*_G M)$, $E$ a $G$-module and $\underline{E}$  the corresponding trivial $G$-equivariant bundle over $M$, then
				\begin{equation*}
					\ind^M_G(\sigma\otimes \underline{E})=\ind^M_G(\sigma)\otimes \chi_E\in \hat{R}(G),
				\end{equation*}
				where $\chi_E$ is the character of the $G$-module $E$.
			\end{proposition}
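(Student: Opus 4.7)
The plan is to realise $\sigma\otimes\underline{E}$ as an exterior product of $\sigma$ with a trivial symbol supported on a single point, and then invoke the multiplicative property (Theorem \ref{multiplicative_property}).

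First I would take $\{\mathrm{pt}\}$ as a $G$-manifold with trivial $G$-action, so that the $G$-module $E$ tautologically becomes a $G$-equivariant bundle over $\{\mathrm{pt}\}$. Let $\sigma_E$ denote the symbol of the zero operator $E\to 0$ on $\{\mathrm{pt}\}$. Since $T^*_G\{\mathrm{pt}\}=\{0\}$ is compact, this symbol is trivially $G$-transversally elliptic (in fact elliptic), and a direct kernel/cokernel computation, just as in Example \ref{zero_operator} but on a point, gives
$$\ind^{\{\mathrm{pt}\}}_G(\sigma_E)=\chi_E.$$

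Next I would use the canonical identification $M\times\{\mathrm{pt}\}\cong M$ of $G$-manifolds, under which $T^*(M\times\{\mathrm{pt}\})\cong T^*M$ and the conormal subset $T^*_G(M\times\{\mathrm{pt}\})$ corresponds to $T^*_G M$. Under this identification, the exterior product map \eqref{kmultiplication} sends $\sigma\boxtimes\sigma_E$ to the symbol which at $(p,\xi)\in T^*M$ acts as $\sigma(p,\xi)\otimes\mathrm{Id}_E$; this is by construction the symbol $\sigma\otimes\underline{E}$. This identification is a short unpacking of definitions at the level of the complex $\pi^*E^\bullet\otimes\underline{E}$.

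Applying the multiplicative property with $G_1$ trivial, $G_2=G$, $M_1=M$, $M_2=\{\mathrm{pt}\}$ then yields
$$\ind^M_G(\sigma\otimes\underline{E})=\ind^{M\times\{\mathrm{pt}\}}_G(\sigma\boxtimes\sigma_E)=\ind^M_G(\sigma)\cdot\ind^{\{\mathrm{pt}\}}_G(\sigma_E)=\ind^M_G(\sigma)\otimes\chi_E,$$
as required. I do not anticipate any serious obstacle; the only point requiring minor care is verifying that the exterior product $\boxtimes$ on $K$-theory classes corresponds, on the level of symbols, to tensoring componentwise with $\underline{E}$ when the second factor is a point. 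This is a routine check against the definitions of the exterior product of complexes of bundles and of the multiplication map in \eqref{kmultiplication}.
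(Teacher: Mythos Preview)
The paper states Proposition~\ref{trivial_twist} without proof, treating it as a standard consequence of Atiyah's multiplicative property. Your argument via the exterior product with a point is exactly the standard way to derive it, and is correct.

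One small correction: with your choice $G_1=\{e\}$, $G_2=G$, $M_1=M$, $M_2=\{\mathrm{pt}\}$, the first slot in \eqref{kmultiplication} is $K_{G}(T^*_{\{e\}}M)=K_G(T^*M)$, which accommodates only \emph{elliptic} symbols, not the transversally elliptic $\sigma\in K_G(T^*_G M)$ you started with. Swap the roles: take $M_1=\{\mathrm{pt}\}$, $M_2=M$, still with $G_1=\{e\}$ and $G_2=G$. Then $\sigma_1=[E]\in K_G(T^*_{\{e\}}\{\mathrm{pt}\})=R(G)$ and $\sigma_2=\sigma\in K_G(T^*_G M)$ fit the stated form of Theorem~\ref{multiplicative_property} exactly, and the conclusion
\[
\ind^M_G(\sigma\otimes\underline{E})=\ind^{\{\mathrm{pt}\}}_G([E])\cdot\ind^M_G(\sigma)=\chi_E\cdot\ind^M_G(\sigma)
\]
follows as you wrote. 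The rest of your argument (identifying $\sigma_E\boxtimes\sigma$ with $\sigma\otimes\underline{E}$ under $M\times\{\mathrm{pt}\}\cong M$) is fine.
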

		\subsection{Localisation}
			\label{localisation_section}
			In this section, we  review the $K$-theoretic localisation method  for computing the index of transversally elliptic operators developed  in  \cite{atiyah_elliptic_1974}. The core idea is to choose a filtration of the manifold that allows one to decompose the symbol into contributions from lower dimensional spaces, essentially reducing the problem to  a computation on vector spaces. The main ingredient involved in the computations is Atiyah's pushed symbol $\sigma^\epsilon$. 

		    Let $G$ be a $n$-dimensional torus acting on $\C^n$ with no fixed vector and weights $w^i\in \fg^*$, $i=1,\ldots,n$. A $G$-invariant Riemannian metric $h$ on $\C^n$ induces  an isomorphism
		    \begin{align*}
				T\C^n&\to T^*\C^n\\
				v&\mapsto\tilde{v}=h(v,\cdot).
			\end{align*}
			Given $\epsilon\in\fg$ we will denote by $\epsilon(p)\in T_p \C^n$ the vector generated by $\epsilon$ at $p$ and by $\widetilde{\epsilon(p)}\in T^*_p \C^n$ its image under the isomorphism defined by $h$. 
		    Let $\sigma(\dolbeault)$ be the symbol
		    \begin{equation*}
		        0\to \pi^*(\textstyle{\bigwedge}^{\!0,0} T^*\C^n)\xrightarrow{\sigma(\dolbeault)}\pi^*(\textstyle{\bigwedge}^{\!0,1} T^*\C^n)\xrightarrow{\sigma(\dolbeault)}\cdots\xrightarrow{\sigma(\dolbeault)}\pi^*(\textstyle{\bigwedge}^{\!0,n} T^*\C^n)\to 0,
		    \end{equation*}
		    where $\pi\colon T^*\C\to \C$ is the projection and $\sigma_{(p,\xi)}(\dolbeault)(w)=\xi^{\!0,1}\wedge w$.
		    The symbol $\sigma(\dolbeault)$ is exact away from the zero section of $T^*\C^n$, in fact, $\text{Char}(\sigma(\dolbeault))=\C\times \{0\}$. Since $\text{Char}(\sigma(\dolbeault))\cap T^*_G\C^n=\text{Char}(\sigma(\dolbeault))$ is non-compact, it is not a $G$-transversally elliptic symbol. Atiyah shows in \cite{atiyah_elliptic_1974} how to obtain a $G$-transversally elliptic symbol $\sigma^\epsilon$ by deforming $\sigma(\dolbeault)$ using the $G$-action. Namely, let $H_i=\left\{\epsilon\in\fg \ | \ w^i(\epsilon)=0\right\}$ be the hyperplane in $\fg$ determined by the weight $w^i$ and pick a vector $\epsilon\in\fg$ away from the hyperplanes $H_i$.

			\begin{definition}
				Atiyah's  pushed symbol $\sigma^\epsilon$ is defined by
		    \begin{equation*}
		        \sigma^\epsilon_{(p,\xi)}(w)=\sigma(\dolbeault)_{(p,\xi+g(\mid\xi\mid )\widetilde{\epsilon(p))}}(w)=(\xi+g(\mid\xi\mid )\widetilde{\epsilon(p)})^{\!0,1}\wedge w,
		    \end{equation*}
		    where $\epsilon(p)\in T_p \C^n$ is the tangent vector generated by $\epsilon$ at $p$, $(p,\xi)\in T^*\C^n$, $w\in \textstyle{\bigwedge}^{\!0,k} T^*\C^n$ and $g$ is a bump function supported in a small neighbourhood of $0$, so that $\sigma^\epsilon$ coincides with $\sigma$ away from the zero section of $T^*\C^n$. 
		    \end{definition}

			Since $\sigma$ is an isomorphism away from the zero section of $T^*\C^n$, we know that $\sigma^\epsilon(p,\xi)$ is not isomorphism if and only if $\xi+g(\mid\xi\mid )\widetilde{\epsilon(p)}=0$. Therefore, 
			\begin{equation*}
			    \text{Char}(\sigma^\epsilon)=\{(p,\xi)\in T^*\C^n \ | \ \xi+g(\mid\xi\mid )\widetilde{\epsilon(p)}=0\}
			\end{equation*}
			is still non-compact. If $(p,\xi)\in T^*_G \C^n|_p$, let $t\in \R_+$ be such that $g(\mid t\xi\mid)>0$. Then 
			\begin{align*}
				t\xi+g(\mid t\xi\mid )\widetilde{\epsilon(p)}=0&\implies t\xi(\epsilon(p))+g(\mid t\xi\mid )\widetilde{\epsilon(p)}(\epsilon(p))=0\\
				&\implies g(\mid t\xi\mid )h(\epsilon(p),\epsilon(p))=0\\
				&\implies \epsilon(p)=0\implies \widetilde{\epsilon(p)}=0\\
				&\implies \xi=0\text{ and } p=0,
			\end{align*}
			where we have used the facts that $\xi(v(p))=0$ for all $v\in \fg$ if $(p,\xi)\in T^*_G \C^n|_p$ and $\epsilon(p)=0$ if and only if $p=0$ since the group $G$ acts with no fixed vectors.
			Therefore $\text{Char}(\sigma^\epsilon)\cap T^*_G \C^n=\{0\}\times \{0\}$ and $\sigma^\epsilon$ is a $G$-transversally elliptic symbol.

			\begin{definition}
			\label{expansions}
				Let $t\in G$ and $\alpha\in \fg^*$, then
				\begin{equation*}
					\left(\frac{1}{1-t^{-\alpha}}\right)^+=-\sum_{k=1}^\infty t^{k\alpha}\text{ and }\left(\frac{1}{1-t^{-\alpha}}\right)^-=\sum_{k=0}^\infty t^{-k\alpha},
				\end{equation*}
				are the expansions in positive and negative powers of $\alpha$. 
			Alternatively, $(\cdot)^\pm$ are the Laurent expansions of a rational function around $t=\infty$ and $t=0$, respectively.
			\end{definition}

			\begin{theorem}
			\label{indexweights}
			Let $\sigma^\epsilon$ be Atiyah's pushed symbol, then
			\begin{equation*}
			    \ind_G^{\C^n} (\sigma^\epsilon) (t)=\prod_{j=1}^n\left(\frac{1}{1-t^{\alpha_j}}\right)^{s_j}\in \hat{R}(G),
			\end{equation*}
			where $s_j=+$ if $\alpha_j(\epsilon)>0$ and $s_j=-$ if $\alpha_j(\epsilon)<0$.
			\end{theorem}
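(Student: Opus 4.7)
My plan is to reduce the $n$-dimensional index to a product of one-dimensional indices by multiplicativity, and then compute each one-dimensional factor by excision to $\PP^1$ combined with the holomorphic Lefschetz formula.

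Since $G$ acts on $\C^n$ with no nonzero fixed vector and preserves the standard symplectic form, the representation is faithful, so Lemma \ref{symplectic_weights_basis} implies that the weights $\alpha_1,\ldots,\alpha_n$ form an integral basis of the weight lattice $\Z_G^*$. Dualising produces an isomorphism $G\cong S^1_1\times\cdots\times S^1_n$ that diagonalises the representation: with respect to the decomposition $\C^n=\C_1\oplus\cdots\oplus\C_n$, each $\C_j$ is acted on by the factor $S^1_j$ (on which $\alpha_j$ becomes the standard character) and fixed by the remaining factors. Writing $\epsilon=\epsilon_1+\cdots+\epsilon_n$ accordingly, $\op{sign}(\alpha_j(\epsilon))=\op{sign}(\epsilon_j)$. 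Because Atiyah's deformation is linear in $\epsilon$ and respects the splitting, the pushed symbol factors as an external product
\[
\sigma^\epsilon \;=\; \sigma_1^{\epsilon_1}\boxtimes\cdots\boxtimes\sigma_n^{\epsilon_n},
\]
and iterated application of Theorem \ref{multiplicative_property} yields
\[
\ind_G^{\C^n}(\sigma^\epsilon)(t)=\prod_{j=1}^n\ind_{S^1_j}^{\C_j}(\sigma_j^{\epsilon_j})(t),
\]
reducing the claim to the one-dimensional case.

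For a single factor, let $S^1$ act on $\C$ with nonzero weight $\alpha$. I would embed $j\colon\C\hookrightarrow\PP^1$ as the complement of one of the two $S^1$-fixed points. Since $\op{Char}(\sigma^\epsilon)\cap T^*_{S^1}\C$ is compact and concentrated at the origin, the pushforward $j_*[\sigma^\epsilon]\in K_{S^1}(T^*_{S^1}\PP^1)$ is concentrated near a single fixed point. Viewing the standard elliptic Dolbeault symbol on $\PP^1$ as a $K$-theoretic sum of two such "localised" pushed symbols, one at each fixed point, the holomorphic Atiyah-Bott-Lefschetz formula expresses $\chi(\mathcal O_{\PP^1})=1$ as a sum of the rational-function contributions $\tfrac{1}{1-t^\alpha}$ and $\tfrac{1}{1-t^{-\alpha}}$ attached to $0$ and $\infty$ respectively. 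Excision then identifies $\ind_{S^1}^{\C}(\sigma^\epsilon)$ with the local contribution at the preserved fixed point, yielding $\tfrac{1}{1-t^\alpha}$ as a rational function.

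The most delicate step, which I expect to be the main obstacle, is converting this rational function into the correct Laurent expansion prescribed by Definition \ref{expansions}. Because $\ind_G^M$ takes values in distributions on $G$, the two expansions $\bigl(\tfrac{1}{1-t^\alpha}\bigr)^{\pm}$ of the same rational function represent inequivalent elements of $\hat R(G)$, and the direction of the push $\epsilon$ singles out exactly one of them. To pin down the sign, I would compute a canonical normalised example (for instance $\alpha=1$ with $\epsilon=\pm 1$) by choosing an explicit representative operator whose $L^2$-kernel and cokernel decompose into identifiable weight spaces, thereby reading off the correct expansion. The $K$-theoretic class $[\sigma^\epsilon]$ is constant under deformation of $\epsilon$ within each chamber of $\fg\setminus\bigcup_i H_i$, so a single verification in each chamber propagates to arbitrary $\epsilon$. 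Combined with the multiplicative factorisation of the first step, this establishes the full product formula.
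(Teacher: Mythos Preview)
The paper does not prove this theorem; it simply refers to Atiyah \cite{atiyah_elliptic_1974} (Theorem 8.1) and Berline--Vergne \cite{berline_indice_1996}. Your sketch is essentially the argument found in those references: reduce to $n=1$ via the multiplicative property, then compute the one-dimensional factor directly. So rather than offering an alternative route, you are supplying the details the paper omits.

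Two minor remarks on your sketch. First, your appeal to Lemma \ref{symplectic_weights_basis} to split $G\cong S^1_1\times\cdots\times S^1_n$ presupposes faithfulness of the representation, which ``no fixed vector'' alone does not imply; a cleaner way around this is to compute first for the standard diagonal $(S^1)^n$-action on $\C^n$ (where the coordinate weights trivially form a basis) and then restrict along the weight homomorphism $G\to(S^1)^n$, using naturality of the transversally elliptic index under change of group. Second, in your $\PP^1$ step, excision only gives $\ind_{S^1}^{\C}(\sigma^\epsilon)=\ind_{S^1}^{\PP^1}(j_*\sigma^\epsilon)$, and $j_*\sigma^\epsilon$ is not the elliptic Dolbeault symbol on $\PP^1$ but only one summand in its $K$-theoretic fixed-point decomposition, so the Lefschetz formula does not compute it directly; the relation $1=\frac{1}{1-t^{\alpha}}+\frac{1}{1-t^{-\alpha}}$ constrains the two local contributions only as rational functions. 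You correctly flag this and the expansion-sign issue as the real content, and your proposed remedy---one explicit $L^2$ computation per chamber to pin down the distribution---is exactly how Atiyah closes the argument.
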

			\begin{proof}
				See \cite{atiyah_elliptic_1974} (Theorem 8.1) or \cite{berline_indice_1996}.
			\end{proof}

			Atiyah shows in \cite{atiyah_elliptic_1974} how to extend the ideas above to evaluate the index for symbol classes in $K_G(T_G^* M)$, when $M$ is a compact manifold with an action of a $(n+1)$-dimensional torus $G$.
			The $G$-action provides $M$ with a filtration by closed subsets
			\begin{equation*}
			    M=M_0\supset M_1\supset\cdots\supset M_{n+1}\supset M_{n+2}=\emptyset
			\end{equation*}
			where $M_i=\left\{p\in M \ | \ \dim G_p\geq i \right\}$.		This filtration determines a split exact sequence for each $i$,
			\begin{equation*}
			\begin{tikzcd}[column sep=small]
			    0\arrow{r} & K_G(T^*_G(M-M_i))\arrow{r}{} & K_G(T^*_G (M-M_{i+1}))\arrow{r}& K_G(T^*_G M|_{(M_i-M_{i+1})})\arrow{r}\arrow[bend right=33,swap]{l}{\theta_i}  & 0,
			\end{tikzcd}
			\end{equation*}
			and a decomposition
			\begin{equation*}
			    K_G(T^*_G M) \cong \bigoplus_{i=0}^{n+1} \theta_i K_G(T^*_G M|_ {(M_i-M_{i+1})}).
			\end{equation*}

			Note that $T^*_G M|_ {(M_i-M_{i+1})}$ is a complex vector bundle over $T^*_G(M_i-M_{i+1})$, therefore we can compose the splittings $\theta_i$ with the Thom isomorphism, which we denote by  $\phi_i $, and obtain
			\begin{equation*}
			    K_G(T^*_G M) \cong \bigoplus_{i=0}^{n+1} \phi_i K_G(T^*_G (M_i-M_{i+1})).
			\end{equation*}

			This decomposition allows us break up a symbol and evaluate its index on each piece separately. For instance, if $[\sigma]\in K_G(T^*_G M)$ then
			\begin{equation*}
			    \ind^M_G (\sigma)=\sum_{i=0}^{n+1} \ind^M_G(\phi_i (\sigma|_{M_i-M_{i+1}})).
			\end{equation*}

			To compute the index, we need to describe the maps $\phi_i$. The Thom isomorphism is well-understood. Let us recall the definition of the splitting maps $\theta_i$ from \cite{atiyah_elliptic_1974}. The approach is analogous to the construction of the pushed symbol in Theorem \ref{indexweights}.

			First, note that we can define $\theta_i$ on each connected component $K$ of $M_i-M_{i+1}$ separately, and each of these components is an open subset of a unique fixed point set $M^{\T^i}$, where $\T^i$ is a torus of dimension $i$. The normal bundle $N$ of $K$ has an action of $\T^i$ that leaves no non-zero vectors fixed. We can therefore pick a vector $\epsilon\in \Lie(\T^i)$ in a tubular neighbourhood $U\subset M$ of $K$ and proceed as in Theorem \ref{indexweights} to define the maps $\theta_i$.  Note that the splittings $\theta_i$ depend on the choice of vector fields used for the deformation.
			To use this decomposition, we need to evaluate the index on each level of the filtration. For a given symbol, the following proposition  gives a systematic approach for applying the localisation argument.

			\begin{proposition}
			\label{localization}
			Let $[\sigma]\in K_G(T_G^* M)$ and let $v$ be a vector field on $M$ such that:
			\begin{itemize}
			    \item $\sigma(x,\xi+\lambda \tilde{v}(p))$ is an isomorphism for every $p\in M\setminus M_{j}$, $\lambda\in \R\setminus \{0\}$, $\xi\in(T_G^* M)_x$ and some $j\in \{1,\ldots,n+1\}$;
			    \item $v(p)$ is tangent to the orbit $G\cdot p$;
			    \item $v(p)=0$ if and only if $p\in M_{j}$;
			\end{itemize}
			where $\tilde{v}=h(v,\cdot)$ for a $G$-invariant Riemannian metric $h$ on $M$.
			Then for $i<j$, we can use $v(p)$ to construct the map $\theta_i$ and we have $\theta_i(\sigma)=0$.
			\end{proposition}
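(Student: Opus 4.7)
The plan is to use $v$ to deform $\sigma$ into a homotopic pushed symbol $\sigma^v$ whose characteristic set on $T_G^*M$ is confined to $T_G^*M|_{M_j}$, and then extract the vanishing of $\theta_i(\sigma|_{M_i-M_{i+1}})$ for $i<j$ directly from the filtration decomposition, using that $M_i-M_{i+1}\subset M\setminus M_j$ whenever $i<j$.

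First, fix a $G$-invariant Riemannian metric $h$ on $M$ so that $\tilde v(p)=h(v(p),\cdot)$ is defined, and choose a nonnegative bump function $g\colon\R_{\geq 0}\to\R_{\geq 0}$ with $g(0)>0$ and $\supp g\subset[0,r_0]$ for a small $r_0$. Following the recipe leading to Theorem \ref{indexweights}, set
\[
\sigma^v(p,\xi)=\sigma\bigl(p,\,\xi+g(|\xi|)\tilde v(p)\bigr).
\]
For $p\in M_j$, hypothesis (iii) gives $\tilde v(p)=0$ and hence $\sigma^v(p,\xi)=\sigma(p,\xi)$. For $p\notin M_j$ and $\xi\in(T_G^*M)_p$, hypothesis (i) applied with $\lambda=g(|\xi|)\neq 0$ yields that $\sigma^v(p,\xi)$ is an isomorphism whenever $g(|\xi|)>0$, while for $|\xi|\geq r_0$ one has $\sigma^v=\sigma$, which is already transversally elliptic. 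Combining the cases, $\text{Char}(\sigma^v)\cap T_G^*M\subset T_G^*M|_{M_j}$, a compact set since $M_j$ is closed in the compact manifold $M$ and the original symbol $\sigma$ is transversally elliptic.

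Next, connect $\sigma$ to $\sigma^v$ through the straight-line homotopy $\sigma_t(p,\xi)=\sigma(p,\xi+tg(|\xi|)\tilde v(p))$, $t\in[0,1]$. The same case analysis, with $\lambda=tg(|\xi|)$ for $t>0$, shows that $\text{Char}(\sigma_t)\cap T_G^*M$ lies in a fixed compact subset of $T_G^*M|_{M_j}$ uniformly in $t$. Hypothesis (ii) that $v(p)$ is tangent to $G\cdot p$ ensures that the deformation is compatible with the $G$-equivariant framework used to define the $\theta_i$ near a stratum (cf.\ the discussion of $\epsilon\in\Lie(\T^i)$ preceding this proposition), so $\sigma_t$ is a homotopy of $G$-transversally elliptic symbols and $[\sigma^v]=[\sigma]$ in $K_G(T_G^*M)$.

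Finally, feed this into the stratum decomposition $K_G(T_G^*M)\cong\bigoplus_{i}\theta_i K_G(T_G^*M|_{(M_i-M_{i+1})})$. Since the pushforward of $K_G(T_G^*M|_{M_j})$ into $K_G(T_G^*M)$ hits only the summands with $i\geq j$, and $[\sigma^v]=[\sigma]$ is represented by a symbol whose characteristic set on $T_G^*M$ lies in $T_G^*M|_{M_j}$, the components of $[\sigma]$ in the summands with $i<j$ must vanish, i.e.\ $\theta_i(\sigma|_{M_i-M_{i+1}})=0$, which is exactly the claim. The main obstacle I anticipate is the careful bookkeeping for the homotopy, verifying the uniform compactness of $\text{Char}(\sigma_t)\cap T_G^*M$ and checking that using a single globally defined $v$ (which, in contrast to the local $\epsilon$'s in $\Lie(\T^i)$, does not vanish on $M_i-M_{i+1}$ for $i<j$) still yields a valid representative inside the image of the stratum $M_j$—this is precisely where the orbit-tangency condition (ii) is essential.
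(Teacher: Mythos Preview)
Your global homotopy argument is sound through the second paragraph: the deformation $\sigma^v$ is $G$-transversally elliptic, homotopic to $\sigma$, and its characteristic set on $T_G^*M$ lies over $M_j$. The problem is the final step. You invoke a ``pushforward of $K_G(T_G^*M|_{M_j})$ into $K_G(T_G^*M)$'', but $M_j$ is closed, not open, so no such extension-by-zero map exists. More importantly, the assertion that a class with characteristic support over $M_j$ automatically has vanishing components in the summands $\theta_i K_G(T_G^*M|_{M_i-M_{i+1}})$ for $i<j$ is precisely what must be proved, and it depends on the particular choice of splittings $\theta_i$. The direct-sum decomposition is not canonical; the projection to the $i$-th summand is defined by subtracting off the higher $\theta_{i'}$-pieces and there is no reason this residual should vanish just because the original characteristic set sits over $M_j$.

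The paper's proof avoids this by working locally and constructively, which is also what the proposition literally asks for (``we can use $v(p)$ to construct the map $\theta_i$''). For each $i<j$ one takes a tubular neighbourhood $U$ of $M_i-M_{i+1}$ inside $M\setminus M_j$, defines $\theta_i(\sigma)(p,\xi)=\sigma(p,\xi+g(|\xi|)\tilde v(p))$ on $T_G^*U$, and shows this is an isomorphism for every $(p,\xi)\in T_G^*U$. The key point is the orthogonality you mention only parenthetically: since $\xi\in T_G^*M$ annihilates tangent vectors to the orbit while $\tilde v(p)$ is dual to such a tangent vector, $\xi+g(|\xi|)\tilde v(p)=0$ forces both $\xi=0$ and $v(p)=0$, and the latter cannot happen on $U\subset M\setminus M_j$. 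Hence $\theta_i(\sigma)$ is an isomorphism throughout and represents zero. Your global deformation contains this computation implicitly, but you still need to unpack it stratum-by-stratum to reach the conclusion---at which point the homotopy to $\sigma^v$ becomes superfluous.
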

			\begin{proof}

			If $i<j$, the vector field $v$ is  non-vanishing  on $M_i-M_{i+1}$. Using this vector field to define the splitting $\theta_i$, we have
			\begin{equation*}
			    \theta_i(\sigma)(p,\xi)=\sigma(p,\xi+g(\mid \xi \mid)\tilde{v}(p)),
			\end{equation*}
			where $(p,\xi)\in (T_G^* U)_p$ and $U$ is a tubular neighbourhood of $M_i-M_{i+1}$ in $M-M_{i+1}$ and $g$ is a bump function supported on a small neighbourhood of the zero section of $T_G^* U$.
			Since $\sigma(p,\xi)$ is an isomorphism for $\xi\neq 0$ it follows that $\theta_i(\sigma)(p,\xi)$ fails to be an isomorphism exactly when $\xi+g(\mid \xi \mid)\tilde{v}(p)=0$. Since $v(p)$ is tangent to $G\cdot p$ and $\xi$ is orthogonal to $G\cdot p$ we have
			\begin{equation*}
			    \xi+g(\mid \xi \mid)\tilde{v}(p)=0\iff \xi=0\text{ and } v(p)=0.
			\end{equation*}
			If $p\in M_i-M_{i+1}$, then $v(p)\neq 0$ and therefore $\theta_i(\sigma)(p,\xi)$ is an isomorphism for all $(p,\xi)\in (T_G^* U)_p$ and $\theta_i(\sigma)$ is a representative of the zero class in $K_G(T_G^* (M-M_{i+1}))$.  
			\end{proof}

			\begin{remark}
				Proposition \ref{localization} is particularly useful in the following situation.
				Suppose that there is a $k\in \{1,\ldots,n+1\}$ such that $M_j=\emptyset$ for all $j>k$. If there is a vector field $v(p)$ satisfying the above hypothesis for $j=k$, then only the level $k$ of the filtration contributes to the index and we have
				\begin{equation*}
					\ind^M_G(\sigma)=\ind^M_G\phi_{k}(\sigma|_{M_{k}}).
				\end{equation*}
			\end{remark}
\section{The index of \texorpdfstring{$\dolbeault_H^E$}{}}
	\label{index_dolbeault}
	Adopting the localisation technique outlined in Section \ref{localisation_section}, we  will compute the index of the twisted Dolbeault operator $\dolbeault_H^E$ and  show that the contributions to the index come from a finite number of closed Reeb orbits.

	Let $M$ be a $(2n+1)$-dimensional toric contact manifold of Reeb type, $n>1$,  equipped with an invariant Sasakian structure. We  proceed by constructing a vector field satisfying the hypothesis of Proposition \ref{localization} for $j=n$.
		Let us fix a determinant $\det\in\bigwedge^{n+1}\fg^*$ and a vector $R\in\fg$ that generates the Reeb vector field. Given a closed Reeb orbit $L$ corresponding to an edge of the moment cone, it follows by the good cone condition that there exists a vector $v_0^L\in \Z_G$ such that
		\begin{equation*}
		    \det(v_0^L,v_1^L,\ldots,v_n^L)=1,
		\end{equation*}
		where $v_i^L$, $i=1,\dots,n$, are the  cone normals at $L$. 
		Since $\{v_0^L,v_1^L,\ldots,v_n^L\}$ is an integral basis of $\Z_G$, its dual basis $\{\mu_L,w^1_L,\ldots,w^n_L\}$ is an integral basis of the integral weights lattice $\Z_G^*$.
	We expand
		\begin{equation*}
		    R=\mu_L(R)v_0^L+w_L^1(R)v_1^L+\cdots+w_L^n(R)v_n^L.
		\end{equation*}
		Since $R$ generates the Reeb, its infinitesimal action is non-zero everywhere. Therefore its $v_0^L$ component, $\mu_L(R)$, cannot be zero, since the infinitesimal action of $v_i^L$, $i=1,\ldots,n$, on $L$ is zero.
		It follows that
		\begin{equation*}
		    \det\left(R,v_1^L,\ldots,v_n^L\right)=\mu_L(R)\neq 0,
		\end{equation*}
		 and $\{R,v_1^L,\ldots,v_n^L\}$ forms a basis of $\fg$ (not necessarily integral).

		Given a vector $\epsilon\in \fg$, we will define the vector field $\epsilon^\perp$ as its orthogonal complement with respect to the Reeb vector field. More precisely, let $\epsilon\in\fg$ and $L_i$ be a Reeb orbit corresponding to an edge of the moment cone. Write
		\begin{equation*}
		    \epsilon=\eta_{L_i}(\epsilon)R+\eta_{L_i}^1(\epsilon)v_1^{L_i}+\cdots+\eta_{L_i}^n(\epsilon)v_n^{L_i}.
		\end{equation*}
		Let $U_i$ be an open neighbourhood of $L_i$ and $V_i$ a closed neighbourhood such that $L_i\subset V_i\subset U_i$. We can assume that the $U_i$'s are all disjoint.
		Define
		\begin{equation*}
		    \phi^{L_i}(p)=
		    \begin{cases}
		        0 & \text{if $p\in M\setminus U_i$},\\
		        -\eta_{L_i}(\epsilon) & \text{if $p\in V_i$},
		    \end{cases}
		\end{equation*}
		and extend it to smoothly interpolate between $0$ and $-\eta_{L_i}(\epsilon)$ on $U_i\setminus V_i$, defining a smooth bump function.
		Projecting out the Reeb vector field component corresponds to shrinking the contribution of $R$ to the vector field generated by $\epsilon$. We define $\epsilon^\perp$ as
		\begin{equation*}
		    \epsilon^\perp(p)=\epsilon(p)+\sum_{i=1}^N \phi^{L_i}(p)R(p).
		\end{equation*}

		\begin{definition}
		\label{polarizingvector}
		An element $\epsilon\in \fg$ is called a {\em polarizing vector} if $\eta^i_{L}(\epsilon)\neq 0$ for $i=1,\ldots,n$ and for every $L\subset M_n$. We say that a vector field $\epsilon^\perp$ is a {\em good deformation vector field} for $[\sigma]\in K_G(T_G^* M)$ if it satisfies the hypothesis of Proposition \ref{localization} for $j=n$.
		\end{definition}

		\begin{proposition}
		\label{polarization}
		If $\epsilon\in\fg$ is a polarizing vector, then $\epsilon^\perp$ is a good deformation vector field for $\sigma(\dolbeault_H^E)\in K_G(T_G^* M)$.
		\end{proposition}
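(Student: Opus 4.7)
The plan is to verify the three hypotheses of Proposition~\ref{localization} with $v = \epsilon^\perp$ and $j = n$. Tangency of $\epsilon^\perp(p)$ to the orbit $G\cdot p$ is immediate from the construction, since both $\epsilon$ and $R$ lie in $\fg$ and $\epsilon^\perp = \epsilon + \sum_i \phi^{L_i} R$. The nontrivial content is (i) that the deformed symbol $\sigma(\dolbeault_H^E)(p, \xi + \lambda \widetilde{\epsilon^\perp}(p))$ is an isomorphism for $p \notin M_n$, $\lambda \neq 0$, $\xi \in (T_G^*M)_p$, and (iii) that $\epsilon^\perp$ vanishes precisely on $M_n$.

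The algebraic heart of the argument is the observation that, under the polarizing hypothesis, no translate $\epsilon - s R$ (with $s \in \R$) can lie in the isotropy algebra $\fg_p$ when $p \notin M_n$. Indeed, by Lemma~\ref{isotropy_toric_contact} and the good cone condition, $\fg_p$ is spanned by a proper subset of the normals $\{v_1^L, \ldots, v_n^L\}$ of any edge $L$ contained in the face of $C$ whose relative interior contains $\phi_\alpha(p)$ (such an edge exists, since that face has dimension at least two whenever $p \notin M_n$). Expanding $\epsilon - sR = (\eta_L(\epsilon) - s)R + \sum_j \eta^j_L(\epsilon) v_j^L$ in the basis $\{R, v_1^L, \ldots, v_n^L\}$, membership in $\fg_p$ would force some $\eta^j_L(\epsilon)$ to vanish, contradicting polarizing. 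From this the zero-locus statement (iii) follows at once: on $L_i \subset M_n$ one uses $v_j^{L_i}(p) = 0$ and $\phi^{L_i}(p) = -\eta_{L_i}(\epsilon)$ on $V_i$ to get $\epsilon^\perp(p) = 0$, while off $M_n$ we have $\epsilon^\perp(p) = (\epsilon - s R)(p)$ with $s$ equal to either $0$ or $-\phi^{L_i}(p)$, which the lemma shows to be nonzero.

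For (i), the symbol is non-exact at $(p, \zeta)$ precisely when $\zeta$ is a real multiple of $\alpha_p$. Using the Sasaki identification $\alpha_p = g(R(p), \cdot)$ with $|R(p)| = 1$ together with the orthogonality $\xi^\vee \perp T_p(G\cdot p)$, the equation $\xi^\vee + \lambda \epsilon^\perp(p) = c R(p)$ paired successively with $R(p)$ and with $\epsilon^\perp(p)$ (tangent to the orbit) yields $|g(\epsilon^\perp(p), R(p))| = |\epsilon^\perp(p)|$; equality in Cauchy-Schwarz then forces $\epsilon^\perp(p) = k R(p)$ for some $k \in \R$, which amounts to the vanishing of $(\epsilon - (k - \phi^{L_i}(p))R)(p)$ and is again excluded by the algebraic lemma. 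The main obstacle is setting up this lemma cleanly, since the polarizing condition is formulated only at edges whereas one must rule out accidental vanishing over the entire stratification of $M$ by isotropy type; the good cone property is what makes the reduction to a single edge-basis possible.
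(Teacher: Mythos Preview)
Your proof is correct and follows the same strategy as the paper's, with your algebraic lemma (that $\epsilon - sR \notin \fg_p$ for any $s\in\R$ when $p \notin M_n$) being a cleaner packaging of the paper's case split into $p \in U_i$ versus $p \notin \bigcup_j U_j$. You are in fact more careful than the paper on condition~(i): the paper reduces it without comment to the zero-locus statement $\epsilon^\perp(p)=0\Leftrightarrow p\in M_n$, whereas your Cauchy--Schwarz step explicitly shows that $\xi + \lambda\,\widetilde{\epsilon^\perp}(p)$ being a multiple of $\alpha_p$ forces $\epsilon^\perp(p)$ to be parallel to $R(p)$, and then your lemma excludes the nonzero-multiple case as well.
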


		\begin{proof}
		 By construction  $\epsilon^\perp(p)$ is tangent to the $G$-orbits. The symbol $\sigma(\dolbeault_H)(p,v(p))$ is invertible if $v(p)$ is not parallel to the Reeb vector field $R(p)$, so we only need to prove that $\epsilon^\perp(p)=0$ if and only if $p\in M_n$.
		Let $p\in U_i$ and suppose that $\epsilon^\perp(p)=0$. Let $L=L_i$ be the orbit corresponding to the normals $v_1^L,\ldots,v_n^L$ and write
		\begin{equation*}
		    \epsilon^\perp(p)=(\eta_L(\epsilon)+\sum_{j=1}^N \phi^{L_j}(p))R(p)+\eta_L^1(\epsilon)v_1^L(p)+\cdots+\eta_L^n(\epsilon)v_n^L(p)=0.
		\end{equation*}
		This implies that
		\begin{equation*}
		    (\eta_L(\epsilon)+\sum_{j=1}^N \phi^{L_j}(p))R+\eta_L^1(\epsilon)v_1^L+\cdots+\eta_L^n(\epsilon)v_n^L\in \fg_p.
		\end{equation*}
		For $p\in U_i$, there is a finite number of possible isotropy algebras $\fg_x$; they are all generated by a subset of $\{v_1^L,\ldots,v_n^L\}$.
		This implies that 
		\begin{equation*}
		    (\eta_L(\epsilon)+\sum_{j=1}^N \phi^{L_j}(p))=0,
		\end{equation*}
		and since $\{v_1^L,\ldots,v_n^L\}$ is a linearly independent set, we have $
		    v_j^L\in \fg_x$  for all  $j=1,\ldots,n.$
		It follows that the image of $p$ under the moment map lies in the intersection of the faces determined by the normals $v_1^L,\ldots,v_n^L$, so $p$ is a point in the orbit $L$.
		Let $p\in M\setminus \bigcup_{j=1}^N U_j$, 
		we have 
		\begin{equation*}
		    \epsilon^\perp(p)=\epsilon(p)+\sum_{i=1}^N \cancelto{0}{\phi^{L_i}(p)}R(p)=\epsilon(p).
		\end{equation*}
		Let $L$ be one of the orbits $L_i$ and write
		\begin{equation*}
		    \epsilon=\eta_{L}(\epsilon)R+\eta_{L}^1(\epsilon)v_1^{L}+\cdots+\eta_{L}^n(\epsilon)v_n^{L}.
		\end{equation*}
		Since ${R,v_1^L,\ldots,v_n^L}$ is a basis of $\fg$ and $G$ acts freely on $M\setminus \bigcup_{j=1}^N U_j$, we have $\eta^j_L(\epsilon)\neq 0$ and $\epsilon(p)$ must be non-zero. Hence, $\epsilon^\perp$ is a good deformation vector field for $\sigma(\dolbeault_H)$ and also for $\sigma(\dolbeault_H^E)$ as they have the same characteristic sets.
		\end{proof}

		The level $n$ filtration $M_n \subset M$ is a disjoint union of closed Reeb orbits $L_e$ indexed by the set $E(C)$ of edges of the moment cone $C$, 
		\begin{equation*}
			M_n=\bigsqcup_{e\in E(C)} L_e.
		\end{equation*}
		Since $M_{n+1}=\emptyset$, it follows by Proposition \ref{polarization} that
		\begin{equation*}
		    \ind^M_G(\sigma(\dolbeault_H))=\ind^M_G\phi_n (\sigma(\dolbeault_H)|_{M_n})=\sum_{e\in E(C)} \ind^M_G \phi_n(\sigma(\dolbeault_H)|_{L_e}).
		\end{equation*}
			Given an orbit $L\subset M_n$ and a vector $\epsilon\in \fg$, we denote by $\epsilon_L^\perp$ the vector
			\begin{equation*}
			    \eta_{L}^1(\epsilon)v_1^{L}+\cdots+\eta_{L}^n(\epsilon)v_n^{L}\in \fg.
			\end{equation*}
		In a neighbourhood of each closed orbit $L\subset M_n$, there is a vector $\epsilon_L^\perp\in \fg$ generating the vector field $\epsilon^\perp$.

		\begin{proposition}
			\label{index_localized}
			Let $L\subset M_n$ be a closed Reeb orbit. For any $t\in G$, we have
			\begin{equation*}
	    		\ind^M_G\phi_n(\sigma(\dolbeault_H)|_L)(t)=\left(\frac{1}{1-t^{-{w}^1_L}}\right)^{s_L^1}\cdots\left(\frac{1}{1-t^{-{w}_L^n}}\right)^{s_L^1}\delta(1-t^{\mu_L}),
			\end{equation*}
			where $\{\mu_L,w^1_L,\ldots,w_L^n\}$ is a basis of the weight lattice $\Z_G^*$ dual to $\{v_0^L,\ldots,v_n^L\}$, $s_L^i=+$ if $w^i_L(\epsilon_L^\perp)>0$ and $s_L^i=-$ if $w^i_L(\epsilon_L^\perp)<0$.
		\end{proposition}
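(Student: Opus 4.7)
The plan is to exploit the tubular neighbourhood structure around the closed Reeb orbit $L$ together with the multiplicative and excision properties of the transversally elliptic index, reducing the computation to two independent pieces: a transverse $\dolbeault$-operator on the normal slice $V$ and a ``zero'' operator along the Reeb direction.

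First I would invoke the excision property to replace the global symbol with one supported in a small $G$-invariant tubular neighbourhood $U$ of $L$. By Lerman's local normal form (Theorem \ref{equivariant_darboux_contact}), since $\phi_\alpha(L)$ corresponds to a vertex of $P$ (coming from the edge of $C$), there is an equivariant diffeomorphism $\varphi\from N=G\times_{G_L} V\to U$ carrying $L$ to the zero section, where $V=H_p$ is an $n$-dimensional complex vector space on which the $n$-torus $G_L$ acts faithfully with weights $w_L^1,\dots,w_L^n$ (Corollary \ref{local_toric_contact}). Choosing the splitting $G=S^1\cdot R \times G_L$ dictated by the basis $\{v_0^L,v_1^L,\dots,v_n^L\}$, the bundle $N$ becomes $S^1\times V$ with the $S^1$-factor acting by translation on $L\cong S^1$ with weight $\mu_L$ and $G_L$ acting on $V$ with weights $w_L^i$.

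Next I would identify the deformed symbol $\phi_n(\sigma(\dolbeault_H)|_L)$ under this local model. The deformation vector field $\epsilon^\perp$ restricted to a neighbourhood of $L$ is, by construction, the generator $\epsilon_L^\perp=\sum_i \eta_L^i(\epsilon)v_i^L$, which is tangent to $G_L$ and generates Atiyah's pushed symbol on $V$ exactly as in the definition preceding Theorem \ref{indexweights}. The horizontal directions at $L$ decompose into the Reeb direction (killed in the symbol complex since $\sigma(\dolbeault_H)$ only sees $\xi_H^{0,1}$) and the transverse $V$-directions. Under the product structure the symbol therefore factors as
\begin{equation*}
    \phi_n(\sigma(\dolbeault_H)|_L) \;=\; \sigma_{S^1}\boxtimes \sigma^{\epsilon_L^\perp}_V,
\end{equation*}
where $\sigma_{S^1}$ is the zero operator on $L\cong S^1$ of Example \ref{zero_operator} and $\sigma_V^{\epsilon_L^\perp}$ is Atiyah's pushed Dolbeault symbol on $V\cong\C^n$ for the isotropy action of $G_L$.

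Finally I apply the multiplicative property (Theorem \ref{multiplicative_property}) to these two factors. Example \ref{zero_operator}, applied to the circle $L$ with $S^1$ acting with weight $\mu_L$, yields the distribution
\begin{equation*}
    \ind^L_G(\sigma_{S^1})(t)=\sum_{k\in\Z} t^{k\mu_L}=\delta(1-t^{\mu_L}).
\end{equation*}
Theorem \ref{indexweights}, applied to the representation of $G_L$ on $V$ with weights $w_L^i$ and deformation $\epsilon_L^\perp$, yields
\begin{equation*}
    \ind^V_G(\sigma^{\epsilon_L^\perp}_V)(t)=\prod_{i=1}^n\left(\frac{1}{1-t^{-w_L^i}}\right)^{s_L^i},
\end{equation*}
with $s_L^i=+$ when $w_L^i(\epsilon_L^\perp)>0$ and $s_L^i=-$ otherwise. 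Multiplying these together gives the claimed formula. The main subtlety I expect is bookkeeping the signs and the precise identification of the $S^1$-weight on $L$ as $\mu_L$: this requires carefully tracking the basis dualities between $\{v_0^L,\dots,v_n^L\}$ and $\{\mu_L,w_L^1,\dots,w_L^n\}$ and verifying that under Lerman's local model the induced $S^1$ action on $L$ is indeed generated by $v_0^L$, so that characters along the orbit are precisely powers of $t^{\mu_L}$.
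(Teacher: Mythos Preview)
Your proposal is correct and follows essentially the same route as the paper: trivialise the normal bundle of $L$ as $S^1\times\C^n$, factor $\phi_n(\sigma(\dolbeault_H)|_L)$ as the exterior product of the zero symbol on $S^1$ with Atiyah's pushed symbol $\sigma^{\epsilon_L^\perp}$ on $\C^n$, then apply the multiplicative property together with Example~\ref{zero_operator} and Theorem~\ref{indexweights}. One small slip: the complementary circle in your splitting of $G$ must be the closed subgroup generated by the \emph{lattice} vector $v_0^L$, not by the Reeb generator $R$ (which need not lie in $\Z_G$); once you use $v_0^L$, the identification of the $S^1$-character with $t^{\mu_L}$ and the extension of the isotropy weights $\alpha_i\in\Z_{G_L}^*$ to $w_L^i\in\Z_G^*$ go through exactly as you anticipate.
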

	    \begin{proof}

	        To evaluate the index, we need to understand the symbols $\phi_n(\sigma(\dolbeault_H|)_{L_e})$. The map $\phi_n$ is a composition of the Thom isomorphism with the splitting homomorphism $\theta_n$.
	        Let $L$ be a connected component of $M_n$ and let $N$ be its normal bundle in $M$. Since $L\subset M$ is an embedded circle, $N$ is a trivial complex bundle $N\cong L\times \C^n$. Write $G=G_L\times S_0^1$, where $G_L$ is the isotropy group associated with $L$ and $S_0^1$ the circle generated by the vector $v_0^L\in \Z_G$. Taking $G_1=G_L$, $G_2=S_0^1$, $M_1=L$ and $M_2=\C^n$ in \eqref{kmultiplication}, we get
	        \begin{equation*}
	            \boxtimes\colon K_{G}(T_{G}^* L)\otimes K_{G_{L}}(T_{G_L}^* \C^n)\to K_{G_{L}\times S_0^1}(T^*_{G_{L}\times S_0^1} (L\times \C^n))\cong K_G(T_G^* N).
	        \end{equation*}
	        The map $\phi_n$ is given by taking the Bott element $[\sigma^{\epsilon_L^\perp}]\in K_{G_L}(T^*_{G_L} \C^n)$ in this product. That is, given $\sigma\in K_G(T_G^*L)$ we have
	        \begin{equation*}
	        	\phi_n(\sigma)=\sigma\boxtimes \sigma^{\epsilon^\perp_L}\in K_G(T_G^* N).
	        \end{equation*}
	        Identifying $N$ with a tubular neighbourhood $U$ of $L$ in $M$ and using excision to extend the symbol to $M$, we obtain $\phi_n(\sigma)=\sigma\boxtimes \sigma^{\epsilon^\perp_L}\in K_G(T_G^* M)$.
	        It follows from Theorem \ref{multiplicative_property} that
	        \begin{equation*}
	        	\ind^M_G \phi_n(\dolbeault_H|_{L})=\ind_{G_L}^{\C^n}(\sigma^{\epsilon^\perp_L})\ind_{G}^L(0),
	        \end{equation*}
	        where $0$ is the zero operator on $L\cong S^1$  discussed in Example \ref{zero_operator}. According to Theorem \ref{indexweights},
	        \begin{equation*}
	        	\ind_{G_L}^{\C^n}(\sigma^{\epsilon^\perp_L})(g)=\left(\frac{1}{1-g^{-{\alpha}_1}}\right)^{s^1_L}\cdots\left(\frac{1}{1-g^{-{\alpha}_n}}\right)^{s^n_L},
	        \end{equation*}
	        where $\alpha_1,\ldots,\alpha_n$ are the weights of the $G_L$-action on $\C^n$, $g\in G_L$, $s_L^i=+$ if $\alpha^i_L(\epsilon_L^\perp)>0$ and $s_L^i=-$ if $\alpha^i_L(\epsilon_L^\perp)<0$. By Corollary  \ref{local_toric_contact}, the weights $\alpha_1,\ldots,\alpha_n$ determine a basis of the weight lattice $\Z_G^*\subset \fg^*$ that is dual to $\{v_1^L,\ldots,v_n^L\}\subset\fg_L$.

	        Writing $G=G_L\times S_0^1$, the $G$-action on $L$ is given by 
	        \begin{equation*}
	        	t=(g,s)\cdot p = sp,
	        \end{equation*}
	        where $t=(g,s)\in G=G_L\times S_0^1$ and $p\in L$.
	        We identify the subgroup $S^1_0\subset G$ generated by $v_0^L$ with $S^1$ via
	        \begin{equation*}
	        	e^{2\pi i s v_0^L}\mapsto e^{2\pi i s}\in S^1.
	        \end{equation*}
	        This identification  is determined by the weight $\mu_L\in \Z^*_G$ defined by $\mu_L(v_0^L)=1$ and $\mu_L(v_j^L)=0$, $j\neq 0$.
	        In fact, let $s=e^{2\pi i s v_0^L}\in S_0^1$. Then $
	        	s^{\mu_L}=e^{2\pi i s \mu_L(v_0^L)}=e^{2\pi i s}\in S^1$ and
	        therefore
	        \begin{equation*}
	        	\ind_G^L(0)(g)=\ind^L_{G_L\times S_0^1}(0)(t,s)=\ind^L_{S^1}(0)(s^{\mu_L}).
	        \end{equation*}
	        Since $S_0^1$ acts freely and transitively on $L$ we have
	        \begin{equation*}
	        	\ind^L_{S^1}(0)(s^{\mu_L})=\sum_{k=-\infty}^\infty s^{k{\mu_L}}=\delta(1-s^{\mu_L}).
	        \end{equation*}
	        We  extend the weight vectors $\alpha_i\in \Z_{G_L}^*$, $i=1,\ldots,n$ to $\Z_G^*$ by defining $\alpha_i(v_0^L)=0$. Denote these extensions by $w_L^i\in \Z_G^*$, $i=1,\ldots,n$.
	        Note that $\{\mu_L,w_L^1,\ldots,w_L^n\}$ is a basis of $\Z_G^*$ dual to $\{v_0^L,v_1^L,\ldots,v_n^L\}$ and $w_L^i(\epsilon_L^\perp)=\alpha_i(\epsilon_L^\perp)$.
	        Let $t=(g,s)\in G=G_L\times S_0^1$ and $\eta\in \fg^*$. We will write $t^\eta=(g,s)^\eta=g^\eta s^\eta.$ Since $G_L$ is generated by $\{v_1^L,\ldots,v_n^L\}$ and $S_0^1$ is generated by $v_0^L$, we have $t^{w_i}=(g,s)^{w_i}=g^{w_i}s^{w_i}=g^{\alpha_i}$ and $t^{\mu_L}=(g,s)^{\mu_L}=g^{\mu_L}s^{\mu_L}=s^{\mu_L}$.
	        Given $t=(g,s)\in G=G_L\times S_0^1$, we have
	        \begin{align*}
	        	\ind^M_G \phi_n(\sigma(\dolbeault_H)|_L)(t)&=\ind^M_{G_L\times S_0^1}(\sigma(\dolbeault_H)|_L)(g,s)\\
	        	&=\ind^{\C^n}_{G_L}(\dolbeault_{\epsilon^\perp})(g)\ind^L_{G_L\times S_0^1}(0)(g,s)\\
	        	&=\left(\frac{1}{1-t^{-{w}^1_L}}\right)^{s^1_L}\cdots\left(\frac{1}{1-t^{-{w}_L^n}}\right)^{s^n_L}\delta(1-t^{\mu_L}).
	        \end{align*}
		\end{proof}

		Next we allow for twistings by an auxiliary bundle and derive the main result of this section, which is a Lefschetz type formula for the index of $\dolbeault_H^E$. Let $L$ be a closed orbit corresponding to an edge of the moment cone $C$ and let $E\to M$ be a  $G$-equivariant transversally holomorphic bundle. Since $L=G/G_L$, the restriction $ E|_{L}$ to $L$ is a vector bundle of the form $G\times_{G_L} F_L$ for some $G_L$-module $F_L$. Since $G=L\times G_L$ we have
			\begin{equation*}
	   	    	G\times_{G_L} F=(L\times G_L)\times_{G_L} F=L\times F.
	   	    \end{equation*}
	Recall that $M_n$ is the disjoint union of closed Reeb orbits $L_e$ indexed by the edges of the moment cone $C$.
		\begin{theorem}
			\label{localizationformulaIndex}
			Let $\dolbeault_H^E$ be the horizontal Dolbeault operator on a  toric compact Sasaki manifold twisted by a $G$-equivariant transversally holomorphic bundle $E$.  For any $t\in G$,
			\begin{equation*}
				\ind^M_G \sigma(\dolbeault_H^E)(t)=\sum_{L\subset M_n} \prod_{i=1}^n\chi_{E|_{L}}(t)\left(\frac{1}{1-t^{-{w}^i_L}}\right)^{s^i_L} \delta(1-t^{\mu_L}),
			\end{equation*}
			where  $\chi_{E|_{L}}$ is the character of the $G_L$-module associated to the restriction $E|_{L}$, and
			$$	s_L^i=	    \begin{cases}
			        +&\text{ if } w^i_L(\epsilon_L^\perp)>0,\\
			        -&\text{ if } w^i_L(\epsilon_L^\perp) <0.
			    \end{cases}$$
		\end{theorem}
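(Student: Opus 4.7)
The plan is to mimic the proof of Proposition \ref{index_localized} while carrying the twist by $E$ through the product decomposition, ultimately absorbing it into a character factor via Proposition \ref{trivial_twist}. Since $\sigma(\dolbeault_H^E)$ is obtained from $\sigma(\dolbeault_H)$ by tensoring with $\Id_E$, the two symbols share the same characteristic set, so the polarizing vector field $\epsilon^\perp$ constructed in Proposition \ref{polarization} remains a good deformation vector field for $\sigma(\dolbeault_H^E)$. Proposition \ref{localization} with $j=n$, combined with $M_{n+1}=\emptyset$, then yields
\begin{equation*}
\ind^M_G \sigma(\dolbeault_H^E) = \sum_{L\subset M_n} \ind^M_G \phi_n\bigl(\sigma(\dolbeault_H^E)|_L\bigr),
\end{equation*}
reducing the computation to one local contribution per closed Reeb orbit.

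Next, I would fix a closed Reeb orbit $L$ and analyze $E|_L$. Because $L = G/G_L$ and the good cone condition gives a splitting $G = G_L \times S_0^1$ (with $S_0^1$ generated by $v_0^L$ and $L \cong S_0^1$), the restriction $E|_L = G\times_{G_L} F_L$ is $G$-equivariantly isomorphic to the trivial bundle $L\times F_L$, where $G_L$ acts on the fiber $F_L$ and $S_0^1$ translates the base. Thus $\sigma(\dolbeault_H^E)|_L$ is the zero symbol on $L$ of Example \ref{zero_operator} tensored with the trivial bundle $\underline{F_L}$. Identifying a tubular neighbourhood of $L$ with $L\times \C^n$ and using excision together with the multiplicative property (Theorem \ref{multiplicative_property}), I obtain
\begin{equation*}
\ind^M_G \phi_n\bigl(\sigma(\dolbeault_H^E)|_L\bigr)(t) = \ind^L_G\bigl(0\otimes \underline{F_L}\bigr)(t)\cdot \ind^{\C^n}_{G_L}\bigl(\sigma^{\epsilon_L^\perp}\bigr)(t),
\end{equation*}
exactly as in the proof of Proposition \ref{index_localized}, but with the zero operator now acting on sections of the trivial bundle $\underline{F_L}$ rather than the trivial line bundle.

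By Proposition \ref{trivial_twist} applied to this trivialization, $\ind^L_G(0\otimes \underline{F_L}) = \chi_{F_L}\cdot \ind^L_G(0)$, and the remaining factors $\ind^L_G(0)$ and $\ind^{\C^n}_{G_L}(\sigma^{\epsilon_L^\perp})$ are precisely the ones evaluated in Proposition \ref{index_localized}, producing $\delta(1-t^{\mu_L})$ and $\prod_{i=1}^n\bigl(1-t^{-w_L^i}\bigr)^{-s_L^i}$ respectively. Identifying $\chi_{F_L}$ with $\chi_{E|_L}$ and summing over $L\subset M_n$ delivers the claimed formula. The main technical subtlety lies in the $G$-equivariant trivialization $E|_L \cong L\times F_L$: this relies on the splitting $G = G_L\times S_0^1$, which in turn is guaranteed by the good cone condition producing the integral basis completion $\{v_0^L, v_1^L,\ldots,v_n^L\}$ of $\Z_G$ at the edge $L$. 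Once this is in place, the remainder is a bookkeeping exercise that reuses the computations already established in Proposition \ref{index_localized}.
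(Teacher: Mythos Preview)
Your proposal is correct and follows essentially the same route as the paper: localise to $M_n$ via the deformation vector field $\epsilon^\perp$, trivialise $E|_L \cong L\times F_L$ using the splitting $G = G_L\times S_0^1$ from the good cone condition, then apply the multiplicative property together with Proposition~\ref{trivial_twist} and Proposition~\ref{index_localized} to extract the character $\chi_{E|_L}$ and the remaining factors. The only cosmetic difference is that you attach the twist $\underline{F_L}$ to the zero operator on $L$ before applying Proposition~\ref{trivial_twist}, whereas the paper pulls out $\chi_{F_L}$ from the full symbol $\sigma(\dolbeault_H)|_L\otimes E|_L$ first; the computations are identical.
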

		\begin{proof}
			We have that
			\begin{equation*}
				\ind^M_G \sigma(\dolbeault_H^E)=\sum_{L\in M_n} \ind^M_G \phi_n(\sigma(\dolbeault_H^E)|_L).
			\end{equation*}
			Restricting $\sigma(\dolbeault_H^E)$ to $L\subset M_n$ we get the symbol $\sigma(\dolbeault_H)|_L\otimes E|_{L}$.  If follows that $E|_{L}=L\times F_L$, for some $G_L$-module $F_L$ and $\chi_{E|_{L}}=\chi_{F_L}$. 
			By the multiplicative property of the index,  Proposition \ref{trivial_twist} and Proposition \ref{index_localized}, we get
			\begin{align*}
				\ind^M_G \phi_n(\sigma(\dolbeault_H^E)|_L(t)&= \ind^M_G(\sigma(\dolbeault_H)|_L) \chi_{F_L}\\
				&=\ind^{\C^n}_{G_L}(\dolbeault_{\epsilon^\perp_L})(t)\ind^L_G(0)(t)\chi_{E|_{L}}(t)\\	
				&=\chi_{E|_{L}}(t) \left(\frac{1}{1-t^{-{w}^1_L}}\right)^{s^1_L}\cdots\left(\frac{1}{1-t^{-{w}_L^n}}\right)^{s^n_L}\delta(1-t^{\mu_L}).
			\end{align*}
			The result follows by summing over all the closed Reeb orbits in $M_n$ .
		\end{proof}

\section{A lattice point formula}
	\label{lattice}
In this section, we relate the index of the horizontal Dolbeault operator $\dolbeault_H$ to the lattice points of the moment cone.
	\subsection{Polar decomposition of polytopes}
			The Lawrence-Varchenko formula expresses the characteristic function of a polytope as an alternating sum of characteristic functions of certain cones associated to vertices of the polytope. By extending the formula to polyhedral rational cones, it will allow us to collect  the multiplicities in the expression for the index in Theorem \ref{localizationformulaIndex}, once expanded into power series.  
			
			We begin by presenting the formula and relating the characteristic function of the interior of a polytope to the dual cones.
		Let $P$ be a simple convex polytope in an $n$-dimensional vector space $V^*$.
		Let $F$ be a face of $P$. The tangent cone to $P$ at $F$ is defined by
		\begin{equation*}
		    C_F=\{y+r(x-y)|r\geq 0, y\in F,x\in P\}.
		\end{equation*}
		Let $\sigma_1,\ldots,\sigma_d$ denote the facets of $P$. Since $P$ is simple, exactly $n$ facets intersect at each vertex. We will denote the set of vertices of $P$ by $\text{Vert}(P)$. For each face $F$ of $P$, let 
		   $ I_F\subset \{1,\ldots,d\}$
		be the set of indices of the facets meeting at $F$ so that
		\begin{equation*}
		    i\in I_F\text{ if and only if }F\subset\sigma_i.
		\end{equation*}
		In particular, if $F=p\in\text{Vert}(P)$ we have 
	$	    i\in I_v\text{ if and only if } v\in\sigma_i.$
	
		Let $p\in\text{Vert}(P)$ and denote by $w^i_p$, $i\in I_p$, the edge vector emanating from $p$ that lies along the unique edge at $p$ which is not contained on the facet $\sigma_i$. Notice that the $w^i_p$ are only determined up to a positive scalar.

		\begin{definition}
			A vector $\xi\in V$ such that all the pairings $\left<w^i_p,\xi\right>$ are non-zero is called a {\em polarizing vector} for $P$.
		\end{definition}

		Let $H_1,\ldots, H_N$ be the hyperplanes in $V$ determined by the edges of $P$ under the pairing between $V$ and $V^*$. A vector $\xi\in V$ is a polarizing vector for $P$ if and only if it belongs to the complement
		\begin{equation*}
		    V_P=V\setminus(H_1\cup\cdots\cup H_N).
		\end{equation*}
		The connected components of $V_P$ are called chambers. The signs of the pairings $\left<w^i_p,\xi\right>$ depend only on the chamber of $V_P$ containing $\xi$.

		\begin{definition}
		Let $\xi\in V_P$ be a polarizing vector. For each vertex $p\in\text{Vert}(P)$ and each edge vector $w^i_p$ emanating from $p$, we define the corresponding polarized edge vector to be
		\begin{equation*}
		    {w^i_p}^\#=
		    \begin{cases}
		        w^i_p&\text{ if }\left<w^i_p,\xi\right>>0,\\
		        -w^i_p&\text{ if }\left<w^i_p,\xi\right><0.
		    \end{cases}
		\end{equation*}
		\end{definition}

		\begin{definition} Given a polarizing vector $\xi\in V_P$, the {\em polarized tangent cone} at  $p\in\text{Vert(P)}$ is defined by
		\begin{equation*}
		    C^\#_p=p+\sum_{w\in E_p^+(\xi)}\R_{< 0}w+\sum_{w\in E_p^-(\xi)}\R_{\geq 0}w,
		    	\end{equation*}
where
		\begin{equation*}
		    E_p^+(\xi)=\{w^i_p \ | \ \left<w^i_p,\xi\right>>0\}\text{ and } E_p^-(\xi)=\{w^i_p \ | \ \left<w^i_p,\xi\right><0\}.
		\end{equation*}
		\end{definition}

		\begin{theorem}[Lawrence-Varchenko]
			\label{LVtheorem}
			Let $P\subset V^*$ be a simple convex polytope and $\xi\in V_P$ a polarizing vector for $P$. Then for any $x\in V^*$, we have
			\begin{equation}
			\label{LV}
			    \mathbf{1}_{P}(x)=\sum_{p\in \text{Vert}(P)} (-1)^{\left|E_p^+(\xi)\right|} \mathbf{1}_{C^\#_p} (x),
			\end{equation}
			where $\mathbf{1}_{C^\#_p}$ is the characteristic function of the polarized  cone $C^\#_p$.
		\end{theorem}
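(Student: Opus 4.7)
The plan is to prove the Lawrence--Varchenko formula pointwise, using a combination of Brion's identity for simple polytopes and an elementary flip identity for edge rays.

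First I would invoke Brion's identity: working in the polytope algebra (the quotient of formal $\Z$-combinations of characteristic functions of polyhedra by the subspace generated by characteristic functions of sets containing a full affine line), one has
$$\mathbf{1}_P \equiv \sum_{p \in \text{Vert}(P)} \mathbf{1}_{C_p} \pmod{\text{lines}}.$$
Because $P$ is simple, each tangent cone $C_p$ is a simplicial cone, generated by the edge vectors $\{w_p^i\}_{i \in I_p}$, and its characteristic function factors as a product along these edge rays.

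Next I would apply the elementary flip identity $\mathbf{1}_{p + \R_{\geq 0} w} \equiv -\mathbf{1}_{p + \R_{< 0} w}$ modulo lines, which follows immediately from $\mathbf{1}_{p + \R_{\geq 0} w} + \mathbf{1}_{p + \R_{< 0} w} = \mathbf{1}_{p + \R w}$. Flipping each edge ray in $E_p^+(\xi)$ transforms $\mathbf{1}_{C_p}$ into $(-1)^{|E_p^+(\xi)|}\mathbf{1}_{C_p^\#}$ modulo lines, which produces the Lawrence--Varchenko identity in the polytope algebra modulo lines.

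The main obstacle is upgrading this algebraic identity to a genuine pointwise equality on $V^*$. The decisive fact is that $\xi$ is polarizing: none of the pairings $\langle w_p^i, \xi \rangle$ vanish. For each edge $e$ of $P$ joining vertices $p$ and $q$ with edge direction $w$ at $p$ and $-w$ at $q$, the pairings $\langle w, \xi \rangle$ and $\langle -w, \xi \rangle$ have opposite signs, so the flip is applied at exactly one of the two endpoints along the affine span of $e$. The spurious line-characteristic contributions produced at adjacent vertices then cancel in pairs, and iterating over all edges of $P$ upgrades the identity to a true pointwise equality on $V^*$. As an alternative route, one could verify the formula directly for simplices by inclusion-exclusion on facets and extend to simple polytopes by a generic triangulation, relying on the valuation property of characteristic functions; I expect the line-cancellation argument above to be somewhat more delicate but cleaner in packaging.
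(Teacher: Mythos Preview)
The paper does not prove this statement; it simply cites Theorem~3.2 of Karshon--Sternberg--Weitsman. The argument there is the direct one the paper reproduces immediately afterwards for the dual version (Theorem~\ref{interiorLVtheorem}): one first shows by a wall-crossing computation that the right-hand side of \eqref{LV} is independent of the polarizing vector $\xi$, and then, for each fixed $x\in V^*$, chooses $\xi$ so that the identity becomes obvious case by case ($x\in P$, $x\in\partial P$, $x\notin P$). Your route via Brion's identity and ray-flips in the polytope algebra is a genuinely different one, and your first two steps are correct and standard.

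The gap is in your third step, upgrading the identity from ``modulo polyhedra containing lines'' to a pointwise equality. Your edge-pairing cancellation does not work as stated. Along an edge $e$ from $p$ to $q$ with direction $w$ at $p$ and $\langle w,\xi\rangle>0$, the flip is performed at $p$ (producing a correction term $\mathbf 1_{p+\R w+\cdots}$) but \emph{not} at $q$, so there is no matching correction at $q$ to cancel against; moreover the transverse edge data at $p$ and at $q$ differ, so even the shapes of the would-be partners do not match. In the one-dimensional case $P=[0,1]$ one sees what actually happens: the single flip correction $-\mathbf 1_{\R}$ cancels not against anything at the other vertex but against the Brion excess $\mathbf 1_{C_0}+\mathbf 1_{C_1}-\mathbf 1_P=\mathbf 1_{\R}$. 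In general the mechanism that makes the pointwise identity work is a cancellation between the flip corrections and the higher-face terms in the pointwise Brianchon--Gram identity $\mathbf 1_P=\sum_F(-1)^{\dim F}\mathbf 1_{C_F}$, which is a more intricate bookkeeping than the vertex-to-vertex pairing you sketch. If you want to avoid that bookkeeping, the cleanest fix is exactly the wall-crossing/choice-of-$\xi$ argument of the cited reference.
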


		\begin{proof}
			See Theorem $3.2$ in \cite{karshon_exact_2007}.
		\end{proof}

		Next we show that by flipping the cones in \eqref{LV} yields a cone decomposition of the interior of the polytope $P$. 

		\begin{definition}
			Define the {\em dual polarized tangent cone} at $p\in \text{Vert}(P)$ by
			\begin{equation*}
			    \widecheck{C}^\#_p=p+\sum_{w\in E_p^+(\xi)}\R_{> 0}w+\sum_{w\in E_p^-(\xi)}\R_{\leq 0}w\\
			\end{equation*}
		\end{definition}
Suppose that $\xi\in V$ lies in one of the walls $H_j$ separating the chambers of $V_P$. Let $e$ be an edge of $P$ perpendicular to this wall and let $p$ be an endpoint of $e$. The edge vectors at $p$ are $w^j_p$ for $j\in I_e$, and an edge vector that lies along $e$ is denoted by $w^e_p$.
		\begin{definition}
			\label{dual_cone_edge}
			The dual polarized tangent cone at the edge $e$ is defined by
			\begin{equation*}
			    \widecheck{C}^\#_e=p+\R w^e_p+\sum_{w\in E_p^+(\xi)}\R_{> 0}w+\sum_{w\in E_p^-(\xi)}\R_{\leq 0}w.
			\end{equation*}		
		\end{definition}

		One  verifies that the cone $\widecheck{C}^\#_e$ is independent of the choice of endpoint of the edge $e$.
			We also note that if $x\in \widecheck{C}^\#_p$, then 
			\begin{equation}
			\label{innerprod}
			    \left<\xi,x\right>\geq \left<\xi,p\right>.
			\end{equation}
			Indeed, if $x\in \widecheck{C}^\#_p$ we have
			\begin{equation*}
			    x=p+ \sum_{w\in E_p^+(\xi)}a_w w+\sum_{w\in E_p^-(\xi)}b_w w,
			\end{equation*}
			where $a_w> 0$, $b_w\leq 0$. Therefore
			$$
			    \left<\xi,x\right>=\left<\xi,p\right>+ \sum_{w\in E_p^+(\xi)}\overbrace{a_w \left<\xi,w\right>}^{\geq 0}+\sum_{w\in E_p^-(\xi)}\overbrace{b_w \left<\xi,w\right>}^{\geq 0}\geq \left<\xi,p\right>
$$

		\begin{theorem}
			\label{interiorLVtheorem}
			Let $P\subset V^*$ be a simple convex polytope and $\xi\in V_P$ a polarizing vector for $P$. Then for any $x\in V^*$, we have
			\begin{equation}
				\label{interiorLVformula}
			    (-1)^n\mathbf{1}_{P^\circ}(x)=\sum_{p\in \text{Vert}(P)} (-1)^{\left|E_p^+(\xi)\right|} \mathbf{1}_{\widecheck{C}^\#_p} (x),
			\end{equation}
			where $n=\dim V$ and $P^\circ$ denotes the interior of $P$.
		\end{theorem}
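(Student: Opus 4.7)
The plan is to deduce equation \eqref{interiorLVformula} from the Lawrence--Varchenko theorem (Theorem \ref{LVtheorem}) by running it with the reversed polarizing vector $-\xi$ and then correcting for the boundary discrepancy between the resulting polarized cones and the dual polarized cones.

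First I would apply Theorem \ref{LVtheorem} to $P$ with polarizing vector $-\xi$, which is again polarizing since it avoids the same walls $H_1,\ldots,H_N$ as $\xi$. Inspecting the definitions gives $E_p^+(-\xi)=E_p^-(\xi)$, and since $P$ is simple each vertex has exactly $n$ emanating edge vectors, so $|E_p^+(-\xi)|=n-|E_p^+(\xi)|$. The theorem then yields, after multiplying by $(-1)^n$,
\[
(-1)^n\mathbf{1}_P(x) \;=\; \sum_{p\in\mathrm{Vert}(P)} (-1)^{|E_p^+(\xi)|}\, \mathbf{1}_{C_p^\#(-\xi)}(x).
\]

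Next I would compare $C_p^\#(-\xi)$ with $\widecheck{C}_p^\#(\xi)$. Writing any $x\in V^*$ in the basis $\{w_p^i\}_{i\in I_p}$ at $p$ as $x=p+\sum_i\lambda_i w_p^i$, one reads off from the definitions that both cones are supported on the same closed simplicial cone
\[
\overline{K}_p \;=\; \{\,\lambda_i\ge 0 \text{ for } w_p^i\in E_p^+(\xi),\ \lambda_i\le 0 \text{ for } w_p^i\in E_p^-(\xi)\,\},
\]
but with strict/non-strict inequalities on the facets of $\overline{K}_p$ swapped: $C_p^\#(-\xi)$ is closed on the $E_p^+(\xi)$-facets and open on the $E_p^-(\xi)$-facets, while $\widecheck{C}_p^\#(\xi)$ is open on the former and closed on the latter. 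Their symmetric difference is therefore supported entirely on the lower-dimensional facets of $\overline{K}_p$ corresponding to proper subsets of $I_p$.

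The heart of the proof, and the main obstacle, is to show that the alternating boundary contribution
\[
\Delta(x) \;:=\; \sum_{p\in\mathrm{Vert}(P)} (-1)^{|E_p^+(\xi)|}\bigl(\mathbf{1}_{C_p^\#(-\xi)}(x) - \mathbf{1}_{\widecheck{C}_p^\#(\xi)}(x)\bigr)
\]
equals $(-1)^n\mathbf{1}_{\partial P}(x)$. The strategy is to group the local facet terms according to which proper face $F$ of $P$ they lie on: for each face $F$, the vertices of $P$ containing $F$ contribute facet pieces of their local $\overline{K}_p$ that reassemble into a Lawrence--Varchenko decomposition of $F$ itself, polarized by the image of $\xi$ in the quotient dual to the affine span of $F$. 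Applying Theorem \ref{LVtheorem} face-by-face and tracking signs identifies these contributions, and inclusion--exclusion over faces collapses $\Delta$ to $(-1)^n\mathbf{1}_{\partial P}$. I expect this combinatorial sign-matching across codimensions to be the trickiest part; the cleanest implementation should be by induction on $n=\dim V$, with the base case $n=1$ reducing to the elementary identity $-\mathbf{1}_{(a,\infty)}+\mathbf{1}_{[b,\infty)} = -\mathbf{1}_{(a,b)}$ for $P=[a,b]$.

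Combining these steps gives
\[
\sum_{p\in\mathrm{Vert}(P)} (-1)^{|E_p^+(\xi)|}\mathbf{1}_{\widecheck{C}_p^\#}(x) \;=\; (-1)^n\mathbf{1}_P(x) - \Delta(x) \;=\; (-1)^n\bigl(\mathbf{1}_P(x)-\mathbf{1}_{\partial P}(x)\bigr) \;=\; (-1)^n\mathbf{1}_{P^\circ}(x),
\]
which is exactly the desired equation \eqref{interiorLVformula}.
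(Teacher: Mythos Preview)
Your reduction is correct and clean: applying Theorem~\ref{LVtheorem} with $-\xi$ and using $|E_p^+(-\xi)|=n-|E_p^+(\xi)|$ gives exactly
\[
(-1)^n\mathbf{1}_P(x)=\sum_{p}(-1)^{|E_p^+(\xi)|}\mathbf{1}_{C_p^\#(-\xi)}(x),
\]
and your comparison of $C_p^\#(-\xi)$ with $\widecheck C_p^\#(\xi)$ is accurate. So the whole theorem is equivalent to the claim $\Delta(x)=(-1)^n\mathbf{1}_{\partial P}(x)$.

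This is a genuinely different route from the paper. The paper does not try to deduce \eqref{interiorLVformula} from Theorem~\ref{LVtheorem}; instead it reruns the same style of argument as in the proof of Theorem~\ref{LVtheorem} in \cite{karshon_exact_2007}: for each $x$ it \emph{chooses} a polarizing vector $\xi$ adapted to that $x$ (minimising $\langle\xi,\cdot\rangle$ at a suitable vertex) so that \eqref{interiorLVformula} is immediate at $x$, and then proves separately that the right-hand side is independent of $\xi$ via a wall-crossing argument using the edge cones $\widecheck C_e^\#$ of Definition~\ref{dual_cone_edge}. This is self-contained and avoids any boundary bookkeeping.

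The gap in your proposal is that the hard content has been pushed into the claim about $\Delta$, and the sketch you give for it is not yet a proof. The individual differences $\mathbf{1}_{C_p^\#(-\xi)}-\mathbf{1}_{\widecheck C_p^\#(\xi)}$ are supported on the \emph{unbounded} facets of the local cone $\overline K_p$, not on $\partial P$; the statement $\Delta=(-1)^n\mathbf{1}_{\partial P}$ therefore encodes a global cancellation among these unbounded pieces that you have not established. Your proposed ``face-by-face Lawrence--Varchenko plus inclusion--exclusion'' would need, for each proper face $F$, precisely the dual formula \eqref{interiorLVformula} on $F$ (to produce $\mathbf{1}_{F^\circ}$ rather than $\mathbf{1}_F$), so the argument is only non-circular if organised as a genuine induction on $n$; and carrying out that inductive step with the correct signs across all codimensions is essentially the same amount of work as the paper's direct case analysis. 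If you want to complete your approach, the cleanest fix is probably to bypass $\Delta$ altogether and instead prove directly that the right-hand side of \eqref{interiorLVformula} is independent of $\xi$ (the paper's wall-crossing lemma does exactly this), after which your $-\xi$ trick combined with a single well-chosen $\xi$ for each $x$ finishes the job.
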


		\begin{proof}
			The proof proceeds along the same lines as that of \autoref{LVtheorem} in \cite{karshon_exact_2007} and comes down to verifying the identity  \eqref{interiorLVformula} in three separate cases and proving independence of the choice of  polarizing vector $\xi$.
			\begin{itemize}
				\item[Case 1:] Suppose that $x\in P^\circ$. 			    Pick any polarizing vector $\xi\in V_P$. Let $p\in\text{Vert}(P)$ be the vertex for which $\left<\xi,p\right>$ is minimal. Then $E_p^-(\xi)=\emptyset$ and we have $P^\circ\subset {\widecheck{C}^\#_p}$. For any other vertex $q\in\text{Vert}(P)$, at least one of the $w^j_q$'s is flipped, and so ${\widecheck{C}^\#_q}\cap P^\circ=\emptyset$. Hence $P^\circ$ is disjoint from the cones $\widecheck{C}^\#_q$ for all other $q\neq p$ and \eqref{interiorLVformula}, when evaluated at $x$, reads $(-1)^n=(-1)^n$.

			    \item[Case 2:] Suppose that $x\in \partial P$. Let $\sigma$ be a facet that contains $x$ and $p\in\text{Vert}(P)$ be such that $p\in\sigma$. Assume that given another facet $\sigma'$, we have $x\notin \sigma'$ if $p\notin\sigma'$.
			    Choose a polarizing vector $\xi\in V_P$ such that 
			    \begin{equation*}
			        \left<\xi,p\right>= \min_{y\in P}\left<\xi,y\right>.
			    \end{equation*}
			    We have $E_p^-(\xi)=\emptyset$ and therefore $x\notin \widecheck{C}^\#_p$ because $\widecheck{C}^\#_p=C^\circ_p$ and $x\in\sigma\subset \partial C_p$.
			    We show that $x\notin \widecheck{C}^\#_q$ for any other $q\in\text{Vert}(P)$.
			    Suppose that $q-p$ is not an edge of $P$. Let $\sigma_q=\sigma\setminus \bigcup_{j\in I_q} \sigma_j$, then $\sigma_q\subset  C^\circ_q$. Since $E_q^-(\xi)\neq\emptyset$ we have that $\widecheck{C}^\#_q\cap C^\circ_q=\emptyset$ and therefore $\sigma_q\cap\widecheck{C}^\#_q=\emptyset$. Thus if $x\in\sigma_q$ for some $q\in\text{Vert}(P)$ we have $x\notin \widecheck{C}^\#_q$.
			    If $q-p$ is an edge of $P$, we have $\left<\xi,q-p\right>>0$. Any element $y$ of $\widecheck{C}^\#_q$ can be written uniquely as
			    \begin{equation}\label{lincomb}
			        y=q+a(p-q)+r,
			    \end{equation}
			    where $a\leq 0$ and $r$ is a linear combination of the edges $w^j_q$ that are not parallel to the edge $p-q$. Since $x\in\sigma$, we can write $x$ uniquely as
			    \begin{equation*}
			        x=q+b(p-q)+s,
			    \end{equation*}    
			    where $b\geq 0$ and $s$ is a linear combination of the edges $w^j_q$ that are not parallel to the edge $p-q$. Since $x$ does not belong to any facet that does not contain $p$, we have that $b\neq 0$, and it follows from \eqref{lincomb} that $x\notin \widecheck{C}^\#_q$. This proves that $x\notin \widecheck{C}^\#_q$ for any vertex $q\in \text{Vert}(P)$. Therefore \eqref{interiorLVformula}, when evaluated at $x$, reads $0=0$.

			    \item[Case 3:] Suppose that $x\notin P$.  
			    Choose a polarizing vector $\xi\in V_P$ satisfying
			    \begin{equation*}
			        \left<\xi,x\right> < \min_{y\in P}\left<\xi,y\right>.
			    \end{equation*}
			    It  follows from \eqref{innerprod} that $x$ is not in $\widecheck{C}^\#_p$ for any $p\in \text{Vert}(P)$. Thus \eqref{interiorLVformula} for the polarizing vector $\xi$, when evaluated at $x$, reads $0=0$.
		\end{itemize}

The final step is to show that the right-hand side of \eqref{interiorLVformula} is independent of $\xi$. 
			    More precisely, we prove that the right-hand side of \eqref{interiorLVformula} does not change when $\xi$ crosses the walls $H_j$.
			    Suppose $H_j$ is not perpendicular to any edge vectors at $p$. The signs of  $\left<\xi,w^j_p\right>$ do not change, so the cone $\widecheck{C}^\#_p$ does not change as $\xi$ crosses the wall. The vertices whose contributions to the right-hand side of \eqref{interiorLVformula} change as $\xi$ crosses $H_j$ come in pairs because each edge of $P$ that is perpendicular to $H_j$ has two endpoints.
			    For each such vertex $p$, denote by $Q_p(x)$ and $Q'_p(x)$ its contributions to the right-hand side of \eqref{interiorLVformula} before and after $\xi$ crossed $H_j$. Let $e$ be an edge perpendicular to $H_j$ and $p$ an endpoint of $e$. Let $Q_e(x)$ be the characteristic function of the cone $\widecheck{C}^\#_e$ corresponding to the value of $\xi$ as it crosses $H_j$. We have
			    \begin{equation*}
			        Q_p(x)=(-1)^{|E_p^+(\xi)|}\mathbf{1}_{{\widecheck{C}}^\#_p}\text{ and } Q'_p(x)=(-1)^{|E_p^+(\xi)|+1}\mathbf{1}_{{\widecheck{C'}}^\#_p},
			    \end{equation*}
			    therefore
			    \begin{align*}
			        Q_p(x)-S'_p(x)&=(-1)^{|E_p^+(\xi)|}\mathbf{1}_{{\widecheck{C}}^\#_p}-(-1)^{|E_{p}^+(\xi)|+1}\mathbf{1}_{{\widecheck{C'}}^\#_p}\\
			        &=(-1)^{|E_p^+(\xi)|}(\mathbf{1}_{{\widecheck{C}}^\#_p}+\mathbf{1}_{{\widecheck{C'}}^\#_p})\\
			        &=(-1)^{|E_p^+(\xi)|}(\mathbf{1}_{{\widecheck{C}}^\#_e})=(-1)^{|E_p^+(\xi)|}Q_e(x)
			    \end{align*}    
			    If $q$ is the other endpoint of $e$, then $|E_p^+(\xi)|=|E_{q}^+(\xi)|\pm 1$. Hence
			    \begin{equation*}
			        Q_{q}(x)-Q'_{q}(x)=(-1)^{|E_{q}^+(\xi)|}Q_e(x)=(-1)^{|E_p^+(\xi)|+1}Q_e(x)
			    \end{equation*}
			    and
		$$
			        (Q_p(x)+Q_{q}(x))-(Q'_p(x)+Q'_{q}(x))=(-1)^{|E_p^+(\xi)|}Q_e(x)+(-1)^{|E_p^+(\xi)|+1}Q_e(x)
			        =0
$$		
			    Thus crossing $H_j$ does not change the right-hand side of \eqref{interiorLVformula}.
		\end{proof}
		Formulas  \eqref{LV} and  \eqref{interiorLVformula}  also have an expression in terms of generating series.
		
		\begin{definition}
			\label{generating_series}
			Let $V$ be a vector space with basis $e_1,\ldots,e_n$ and $\Z_V$ its integral lattice. If $A\subset V^*$ is a subset, we denote the generating series of $A$ by
			\begin{equation*}
			    A(x)=\sum_{\mu\in A\cap \Z_V^*} x^\mu,
			\end{equation*}
			where $\Z_V^*$ is the dual of the integral lattice $\Z_V$, $x=(x_1,\ldots,x_n)$ and $x^\mu=(x_1^{\mu_1},\ldots,x_n^{\mu_n})$. 
		\end{definition}

		\begin{theorem}
			Let $P\subset V^*$ be a simple convex polytope and $\xi\in V_P$ a polarizing vector for $P$. Then,
			\begin{equation*}
			    P(x)=\sum_{p\in \text{Vert}(P)} (-1)^{\left|E_p^+(\xi)\right|} C^\#_p (x)
			\end{equation*}
		\end{theorem}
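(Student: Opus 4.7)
The plan is to deduce the generating-series identity directly from the characteristic-function version already proved in Theorem \ref{LVtheorem}. Since $P$ is compact, restricting both sides of \eqref{LV} to $x \in \Z_V^*$ and summing $x^\mu$ over all $\mu \in \Z_V^*$ formally gives
\begin{equation*}
    \sum_{\mu \in P \cap \Z_V^*} x^\mu \;=\; \sum_{p \in \text{Vert}(P)} (-1)^{|E_p^+(\xi)|} \sum_{\mu \in C_p^\# \cap \Z_V^*} x^\mu,
\end{equation*}
which is exactly the claim once the right-hand sums are interpreted as $C_p^\#(x)$. So the whole task reduces to justifying this formal exchange of summation.

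The issue is that each $C_p^\#$ is unbounded, so $C_p^\#(x)$ is an infinite Laurent series rather than a polynomial. Because $P$ is simple, however, each polarized tangent cone is a shifted simplicial cone with apex $p$ and with $n$ linearly independent edge generators $u_{p,1},\ldots,u_{p,n}$ (namely the primitive vectors along the polarized edge directions ${w_p^i}^\#$). Standard computations give a rational expression
\begin{equation*}
    C_p^\#(x) \;=\; \frac{x^p\, N_p(x)}{\prod_{i=1}^{n} \bigl(1 - x^{u_{p,i}}\bigr)},
\end{equation*}
where $N_p(x)$ is the finite generating series of the fundamental parallelepiped of the simplicial cone. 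Thus each $C_p^\#(x)$ defines a rational function on $(\C^*)^n$ whose Laurent expansion (in the direction determined by the polarization) is the formal series $\sum_{\mu \in C_p^\# \cap \Z_V^*} x^\mu$.

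To make the exchange rigorous I would pick a nonempty open set $U \subset (\C^*)^n$ on which every series $C_p^\#(x)$ converges absolutely. Concretely, writing $x_i = e^{z_i}$, absolute convergence of $C_p^\#(x)$ holds on a nonempty open cone in $\mathrm{Re}(z) \in V$ determined by the dual cone to $C_p^\# - p$, and since there are only finitely many vertices, the intersection of these open cones over $p \in \text{Vert}(P)$ is nonempty. On $U$, the right-hand side of the desired identity is an absolutely convergent sum of rational functions, and the left-hand side is a polynomial, so Theorem \ref{LVtheorem} applied lattice-pointwise (which is valid since $\mathbf{1}_P$ and $\mathbf{1}_{C_p^\#}$ agree with their restrictions to $\Z_V^*$) combined with Fubini gives the equality on $U$. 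Equality of rational functions on $(\C^*)^n$ then follows by analytic continuation.

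The main obstacle is this convergence bookkeeping; the combinatorial content is already carried by Theorem \ref{LVtheorem}. An alternative route would be to invoke Brion's theorem / the theory of valuations on rational polyhedra, which states that the map assigning to each polyhedron its generating series extends to a valuation on the indicator-function algebra, so that any identity of characteristic functions automatically yields the corresponding identity of generating series; applied to \eqref{LV} this gives the statement immediately.
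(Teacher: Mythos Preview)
Your approach --- deduce the identity from Theorem~\ref{LVtheorem} by summing over lattice points --- is exactly the paper's, but the paper works purely formally: since the sum over $\text{Vert}(P)$ is finite, the interchange $\sum_p \sum_\mu = \sum_\mu \sum_p$ is automatic at the level of coefficient functions $\Z_V^* \to \Z$ (which is all that Definition~\ref{generating_series} requires), so no convergence bookkeeping is needed. Your analytic detour via rational functions is correct but superfluous here; note, incidentally, that the common convergence domain is nonempty not merely because the vertex set is finite (finitely many open cones can have empty intersection) but because the polarization forces every $C_p^\# - p$ into the half-space $\{\eta : \langle \eta, \xi\rangle \le 0\}$.
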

		\begin{proof}
			The proof follows directly from \autoref{LVtheorem}. We have
			\begin{align*}
			    \sum_{p\in \text{Vert}(P)} (-1)^{\left|E_p^+(\xi)\right|} C^\#_v (x)&=\sum_{p\in \text{Vert}(P)} (-1)^{\left|E_p^+(\xi)\right|} \sum_{\mu\in C^\#_v\cap \Z_V^*} x^\mu\\
			    &=\sum_{p\in \text{Vert}(P)} (-1)^{\left|E_p^+(\xi)\right|} \sum_{\mu\in \Z_V^*} \mathbf{1}_{C^\#_v}(\mu) x^\mu\\
			    &=\sum_{\mu\in \Z_V^*}\left(\sum_{p\in \text{Vert}(P)} (-1)^{\left|E_p^+(\xi)\right|}  \mathbf{1}_{C^\#_v}(\mu)\right) x^\mu\\
			    &=\sum_{\mu\in \Z_V^*}\mathbf{1}_{P}(\mu) x^\mu=\sum_{\mu\in P\cap\Z_V^*} x^\mu=P(x).\\
			\end{align*}
		\end{proof}
		Similarly, for the dual Lawrence-Varchenko formula, we have:
		\begin{theorem}
			\label{openLVgenerating}
			\begin{equation*}
			    (-1)^n P^\circ(x)=\sum_{p\in \text{Vert}(P)} (-1)^{\left|E_p^+(\xi)\right|} \widecheck{C}^\#_p (x).
			\end{equation*}
		\end{theorem}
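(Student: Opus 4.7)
The plan is to mirror exactly the argument used for the preceding generating-series version of the Lawrence–Varchenko formula, but feeding in Theorem~\ref{interiorLVtheorem} in place of Theorem~\ref{LVtheorem}. All identities will be read as equalities of formal series in the variables $x = (x_1,\dots,x_n)$ indexed by $\Z_V^*$, so no analytic convergence issue arises; the cones $\widecheck{C}^\#_p$ may contain infinitely many lattice points, but each lattice point $\mu \in \Z_V^*$ appears in only finitely many terms of the expansion.

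First I would expand the right-hand side using Definition~\ref{generating_series}, writing
\begin{equation*}
    \widecheck{C}^\#_p(x) \;=\; \sum_{\mu\in \widecheck{C}^\#_p\cap \Z_V^*} x^\mu \;=\; \sum_{\mu\in \Z_V^*} \mathbf{1}_{\widecheck{C}^\#_p}(\mu)\, x^\mu.
\end{equation*}
Since the sum over $p\in\text{Vert}(P)$ is finite, I can freely interchange it with the sum over $\mu\in\Z_V^*$ to obtain
\begin{equation*}
    \sum_{p\in\text{Vert}(P)} (-1)^{|E_p^+(\xi)|} \widecheck{C}^\#_p(x)
    \;=\; \sum_{\mu\in \Z_V^*} \Bigl(\sum_{p\in\text{Vert}(P)} (-1)^{|E_p^+(\xi)|}\, \mathbf{1}_{\widecheck{C}^\#_p}(\mu)\Bigr) x^\mu.
\end{equation*}

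Next I would apply Theorem~\ref{interiorLVtheorem} pointwise at each lattice point $\mu\in\Z_V^*$. That theorem asserts
\begin{equation*}
    \sum_{p\in\text{Vert}(P)} (-1)^{|E_p^+(\xi)|}\, \mathbf{1}_{\widecheck{C}^\#_p}(\mu) \;=\; (-1)^n\, \mathbf{1}_{P^\circ}(\mu),
\end{equation*}
and substituting this into the previous display yields
\begin{equation*}
    \sum_{p\in\text{Vert}(P)} (-1)^{|E_p^+(\xi)|} \widecheck{C}^\#_p(x)
    \;=\; (-1)^n \sum_{\mu\in \Z_V^*} \mathbf{1}_{P^\circ}(\mu)\, x^\mu
    \;=\; (-1)^n \sum_{\mu\in P^\circ \cap \Z_V^*} x^\mu
    \;=\; (-1)^n P^\circ(x),
\end{equation*}
which is the claimed identity.

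There is no real obstacle here: the proof is a direct formal repackaging of the characteristic-function identity into a generating-series identity, just as in the proof of the previous theorem. The only thing worth noting is that the dual polarized tangent cones $\widecheck{C}^\#_p$ need not have integer vertices, so one should interpret $\widecheck{C}^\#_p(x)$ as the formal sum over integer lattice points contained in the cone (as in Definition~\ref{generating_series}); with this convention the pointwise indicator identity of Theorem~\ref{interiorLVtheorem} transfers verbatim to coefficients of $x^\mu$, and the argument goes through without modification.
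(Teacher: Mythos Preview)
Your proposal is correct and follows exactly the approach the paper takes: the paper states this result with the phrase ``Similarly, for the dual Lawrence--Varchenko formula, we have'' and gives no separate proof, since it is the verbatim analogue of the preceding theorem with Theorem~\ref{interiorLVtheorem} replacing Theorem~\ref{LVtheorem}. Your write-up simply spells this out.
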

	\subsection{Polar decomposition of cones}
		\label{LVconesSECTION}
		In this section we explain how to adapt the Lawrence-Varchenko formula \eqref{LV} to produce a polar decomposition of a rational polyhedral cone. More precisely, let $P\subset V^*$ be a simple polytope and let $C\subset V^*\times \R^*$ be the cone over $P$, i.e.
		\begin{equation*}
			C=\{r(\eta,1^*)\in V^*\times \R^*\ | \ \eta\in P, r\geq 0\}.
		\end{equation*}

		The cone $C$ is the lift of the left-hand side of \eqref{LV} from $V^*$ to $V^*\times \R^*$. Lifting the right-hand side of \eqref{LV}, we can expect to obtain a polar decomposition of $C$. We will see  that this is almost true; one must introduce an error term to obtain an identity.
	 
		Let $p\in P$ be a vertex of the polytope. We will denote by $\mu_p\in \Z_V^*\times \Z^*$ the primitive edge vector of $C$ going through $p$. Given a polarizing vector $\xi\in V_P$ for $P$, let $C_p^\#$ be the polarized tangent cone of $P$ at $p$ and define
		\begin{equation*}
			K_p^\#=C_p^\#+\R \mu_p.
		\end{equation*}

		\begin{definition}
			Let $S(x)$ be the function defined as 
			\begin{equation*}
			    S(x)=\sum_{p\in Vert(P)} (-1)^{\left|E_p^+(\xi)\right|} \mathbf{1}_{K_p^\#}(x).
			\end{equation*}
		\end{definition}	
		The function  $S(x)$ is the lift of the right-hand side of \eqref{LV}.
		Let $\mathcal H=\{(\eta,\frac{1}{2}^*)\in V^*\times \R^* \ | \ \eta\in V^*\}$ be the characteristic hyperplane and let
		\begin{equation*}
		\mathcal    H_\lambda=\{(\eta,\lambda^*)\in V^*\times \R^* \ | \ \eta\in V^*, \lambda^*\in \R^*\},
		\end{equation*}
		be its parallel shifts.
		The polytope $P$ is the intersection of $C$ with $\mathcal H$ and for $\lambda\geq 0$ the intersection of $\mathcal H_\lambda$ with $C$ will be denoted $P_\lambda$. The projection $\pi\colon V^*\times \R^*\to V^*$ identifies the hyperplanes $\mathcal H_\lambda$ with the vector space $V^*$ and the polytopes $P_\lambda$  with $\lambda P\subset V^*$. When $\lambda<0$, we define $P_\lambda$ as the intersection $P_\lambda=\mathcal H_\lambda\cap (-C)$. In this case, $\pi$ also identifies $P_\lambda$ with $\lambda P\subset V^*$. If $\xi\in V_P$ is a polarizing vector for $P$, then $\xi$ is a polarizing vector for every $P_\lambda$.

		Let $p\in P$ be a vertex, then $\lambda p$ is a vertex of the polytope $\lambda P $. The intersection of $\mathcal H_\lambda$ with $K_p^\#$ is equal to $C_{\lambda p}^\#$ for $\lambda\geq 0$, where $C_{\lambda p}^\#$ is the polarized polarized tangent cone cone at $\lambda p$. When $\lambda<0$ the intersection becomes $K_p^\#\cap \mathcal H_\lambda=\widecheck{C}_{\lambda p }^\#$, the dual polarized tangent cone at $\lambda p$.
		Therefore, restricting $S$ to $\mathcal H_\lambda$ we get
		\begin{align*}
			S|_{ \mathcal H_\lambda}(x)&=\sum_{p\in Vert(P)} (-1)^{\left|E_v^+(\xi)\right|} \mathbf{1}_{K_p^\#}|_{ \mathcal H_\lambda}(x)=\sum_{p\in Vert(P)} (-1)^{\left|E_v^+(\xi)\right|} \mathbf{1}_{K_p^\#\cap \mathcal H_\lambda}(x)\\
			&=\sum_{p\in Vert(P)} (-1)^{\left|E_v^+(\xi)\right|} \mathbf{1}_{C_{\lambda p}^\#}(x)=P_\lambda(x),
		\end{align*}
		if $\lambda\geq 0$. Similarly, if $\lambda<0$ we have
		\begin{align*}
			S|_{ \mathcal H_\lambda}(x)&=\sum_{p\in Vert(P)} (-1)^{\left|E_v^+(\xi)\right|} \mathbf{1}_{K_p^\#}|_{ \mathcal H_\lambda}(x)=\sum_{p\in Vert(P)} (-1)^{\left|E_v^+(\xi)\right|} \mathbf{1}_{K_p^\#\cap  \mathcal H_\lambda}(x)\\
			&=\sum_{p\in Vert(P)} (-1)^{\left|E_v^+(\xi)\right|} \mathbf{1}_{\widecheck{C}_{\lambda p}^\#}(x)=(-1)^n P^\circ_\lambda(x).
		\end{align*}
		It follows that
		\begin{equation}
			\label{LVcones}
			S(x)=\mathbf{1}_C(x)+(-1)^n\mathbf{1}_{-C^\circ}(x).
		\end{equation}
		In a similar manner for the dual polarized tangent cones, let
		\begin{equation*}
			\widecheck{K}_p^\#=\widecheck{C}_p^\#+\R \mu_p.
		\end{equation*}
		We have $\widecheck{K}_p^\#\cap  \mathcal  H_\lambda=\widecheck{C}_{\lambda p}^\#$ for $\lambda\geq 0$ and $\widecheck{K}_p^\#\cap \mathcal  H_\lambda=C_{\lambda p}^\#$.

		\begin{definition}
			Let $\widecheck{S}(x)$ be the function defined by
			\begin{equation*}
				\widecheck{S}(x)=\sum_{p\in Vert(P)} (-1)^{\left|E_p^+(\xi)\right|} \mathbf{1}_{\widecheck{K}_p^\#}(x),
			\end{equation*}
			This is the lift of the right-hand side of \eqref{interiorLVformula}.
		\end{definition}

		Restricting to the hyperplanes $\mathcal H_\lambda$ we get
		\begin{align*}
			\widecheck{S}|_{\mathcal H_\lambda}(x)&=\sum_{p\in Vert(P)} (-1)^{\left|E_v^+(\xi)\right|} \mathbf{1}_{\widecheck{K}_p^\#}|_{\mathcal H_\lambda}(x)=\sum_{p\in Vert(P)} (-1)^{\left|E_v^+(\xi)\right|} \mathbf{1}_{\widecheck{K}_p^\#\cap\mathcal  H_\lambda}(x)\\
			&=\sum_{p\in Vert(P)} (-1)^{\left|E_v^+(\xi)\right|} \mathbf{1}_{\widecheck{C}_{\lambda p }^\#}(x)=(-1)^n P^\circ_\lambda(x),
		\end{align*}
		when $\lambda\geq 0$, and

		\begin{align*}
			\widecheck{S}|_{\mathcal  H_\lambda}(x)&=\sum_{p\in Vert(P)} (-1)^{\left|E_v^+(\xi)\right|} \mathbf{1}_{\widecheck{K}_p^\#}|_{\mathcal H_\lambda}(x)=\sum_{p\in Vert(P)} (-1)^{\left|E_v^+(\xi)\right|} \mathbf{1}_{\widecheck{K}_p^\#\cap \mathcal H_\lambda}(x)\\
			&=\sum_{p\in Vert(P)} (-1)^{\left|E_v^+(\xi)\right|} \mathbf{1}_{C_{\lambda p }^\#}(x)=P_\lambda(x),
		\end{align*}
		when $\lambda<0$.
		Therefore
		\begin{equation}
			\label{LVconesdual}
			\widecheck{S}(x)=(-1)^n\mathbf{1}_{C^\circ}(x)+\mathbf{1}_{-C}(x).
		\end{equation}
		Formulas \eqref{LVcones} and \eqref{LVconesdual} can again be expressed in terms of generating series. 
		\begin{proposition}
			\label{LVconesGenerating}
			\begin{align*}
				\sum_{p\in Vert(P)}(-1)^{\left|E_p^+(\xi)\right|} K_p^\#(x)&=C(x)+(-1)^n (-C^\circ)(x)\\
				&=\sum_{\mu\in C\cap (\Z_V^*\times \R^*)} x^\mu+(-1)^n\sum_{\mu\in (-C^\circ)\cap (\Z_V^*\times \R^*)} x^\mu.
			\end{align*}
		\end{proposition}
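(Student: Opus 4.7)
The plan is to deduce this generating-series identity from the pointwise identity \eqref{LVcones} between characteristic functions by the same formal manipulation used in the polytope case (the proof of the generating-series version of the Lawrence-Varchenko theorem stated just above the definition of the cone version).

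First I would expand each term on the left-hand side using Definition \ref{generating_series}, writing
\begin{equation*}
K_p^\#(x) \;=\; \sum_{\mu\in K_p^\#\cap(\Z_V^*\times\Z^*)} x^\mu \;=\; \sum_{\mu\in \Z_V^*\times\Z^*} \mathbf{1}_{K_p^\#}(\mu)\, x^\mu,
\end{equation*}
so that (as a formal sum over the lattice $\Z_V^*\times\Z^*$)
\begin{equation*}
\sum_{p\in\mathrm{Vert}(P)}(-1)^{|E_p^+(\xi)|} K_p^\#(x)
= \sum_{\mu\in \Z_V^*\times\Z^*}\Bigl(\sum_{p\in\mathrm{Vert}(P)}(-1)^{|E_p^+(\xi)|}\mathbf{1}_{K_p^\#}(\mu)\Bigr) x^\mu.
\end{equation*}
The inner bracket is by definition $S(\mu)$.

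Next I would invoke the identity \eqref{LVcones} established above, $S(\mu)=\mathbf{1}_C(\mu)+(-1)^n\mathbf{1}_{-C^\circ}(\mu)$, which holds pointwise on $V^*\times\R^*$ and hence in particular on lattice points. Substituting gives
\begin{equation*}
\sum_{\mu\in \Z_V^*\times\Z^*} \bigl(\mathbf{1}_C(\mu)+(-1)^n\mathbf{1}_{-C^\circ}(\mu)\bigr) x^\mu
= \sum_{\mu\in C\cap(\Z_V^*\times\Z^*)} x^\mu \;+\; (-1)^n\sum_{\mu\in(-C^\circ)\cap(\Z_V^*\times\Z^*)} x^\mu,
\end{equation*}
which is precisely $C(x)+(-1)^n(-C^\circ)(x)$ in the notation of Definition \ref{generating_series}.

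There is no real obstacle here, since the work has already been done in establishing \eqref{LVcones} by slicing the lifted cones $K_p^\#$ by the horizontal hyperplanes $\mathcal H_\lambda$ and invoking Theorems \ref{LVtheorem} and \ref{interiorLVtheorem} on each slice. The only point requiring a remark is that the sums involved are infinite, so the identity must be interpreted at the level of formal power series (or, equivalently, of distributions on the torus dual to $\Z_V\times\Z$); however, since the swap of summation collects only finitely many terms of $\mathbf{1}_{K_p^\#}(\mu)$ (one per vertex $p$) for each fixed lattice point $\mu$, the rearrangement is unambiguous and the identity follows termwise.
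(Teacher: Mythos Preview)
Your proof is correct and follows essentially the same approach as the paper: expand each $K_p^\#(x)$ via Definition~\ref{generating_series}, swap the (finite) sum over vertices with the lattice sum, and apply the pointwise identity \eqref{LVcones}. Your added remark justifying the interchange of summations is a welcome clarification that the paper leaves implicit.
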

		\begin{proof}
			The proof is a straightforward application of \eqref{LVcones}.
			\begin{align*}
				\sum_{p\in Vert(P)}(-1)^{\left|E_p^+(\xi)\right|} K_p^\#(x)&=\sum_{p\in Vert(P)}(-1)^{\left|E_p^+(\xi)\right|}\sum_{\mu\in K_p^\#\cap (\Z_V^*\times \R^*)} x^\mu\\
				&=\sum_{p\in Vert(P)}(-1)^{\left|E_p^+(\xi)\right|}\sum_{\mu\in(\Z_V^*\times \R^*)} \mathbf{1}_{K_p^\#}(\mu)x^\mu\\
				&=\sum_{\mu\in (\Z_V^*\times \R^*)}\left(\sum_{p\in Vert(P)}(-1)^{\left|E_p^+(\xi)\right|} \mathbf{1}_{K_p^\#}(\mu)\right)x^\mu\\
				&=\sum_{\mu\in (\Z_V^*\times \R^*)}\left(\mathbf{1}_C(\mu)+(-1)^n\mathbf{1}_{(-C^\circ)}(\mu)\right)x^\mu\\
				&=\sum_{\mu\in C\cap (\Z_V^*\times \R^*)} x^\mu+(-1)^n\sum_{\mu\in (-C^\circ)\cap (\Z_V^*\times \R^*)} x^\mu.
			\end{align*}
		\end{proof}
Similarly, we have:
		\begin{proposition}	\label{LVconesdual2}
			\begin{align*}
				\sum_{p\in Vert(P)}(-1)^{\left|E_p^+(\xi)\right|} \widecheck{K}_p^\#(x)&=(-1)^n C^\circ(x)+(-C)(x)\\
				&=(-1)^n\sum_{\mu\in  C^\circ\cap (\Z_V^*\times \R^*)} x^\mu+\sum_{\mu\in (-C)\cap (\Z_V^*\times \R^*)} x^\mu.
			\end{align*}
		\end{proposition}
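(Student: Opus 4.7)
The plan is to mirror the proof of Proposition \ref{LVconesGenerating} almost verbatim, substituting the dual cones $\widecheck{K}_p^\#$ for $K_p^\#$ and invoking the already-established pointwise identity \eqref{LVconesdual} in place of \eqref{LVcones}. Since \eqref{LVconesdual} is a statement about characteristic functions valued in $\{-1,0,1\}$ on $V^*\times \R^*$, and the generating series of a set $A$ is, by Definition \ref{generating_series}, the sum of $x^\mu$ over $\mu\in A\cap (\Z_V^*\times \Z^*)$, the passage from characteristic functions to generating series is purely formal: one writes each $\widecheck{K}_p^\#(x)$ as $\sum_\mu \mathbf{1}_{\widecheck{K}_p^\#}(\mu)\, x^\mu$ over the full dual integral lattice and interchanges the order of summation, which is permissible because on any fixed $x^\mu$-monomial only finitely many operations occur (the sum over $p\in \mathrm{Vert}(P)$ is finite).

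Concretely, I would first expand
\[
\sum_{p\in \mathrm{Vert}(P)}(-1)^{|E_p^+(\xi)|}\,\widecheck{K}_p^\#(x)
= \sum_{\mu\in \Z_V^*\times \Z^*}\left(\sum_{p\in \mathrm{Vert}(P)}(-1)^{|E_p^+(\xi)|}\,\mathbf{1}_{\widecheck{K}_p^\#}(\mu)\right) x^\mu .
\]
Then I would recognise the inner parenthesis as $\widecheck{S}(\mu)$ and substitute \eqref{LVconesdual}, which gives $\widecheck{S}(\mu)=(-1)^n\mathbf{1}_{C^\circ}(\mu)+\mathbf{1}_{-C}(\mu)$. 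Splitting the resulting sum into its two pieces yields
\[
(-1)^n\!\!\sum_{\mu\in C^\circ \cap (\Z_V^*\times \Z^*)} x^\mu \;+\;\sum_{\mu\in (-C)\cap (\Z_V^*\times \Z^*)} x^\mu ,
\]
which is the claimed identity (up to the harmless typographic convention in the statement of writing $\R^*$ where the ambient dual lattice $\Z_V^*\times \Z^*$ is intended).

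There is essentially no serious obstacle here: the work was done in establishing \eqref{LVconesdual} via the slice-by-hyperplanes argument ($\widecheck{K}_p^\#\cap \mathcal H_\lambda = \widecheck{C}_{\lambda p}^\#$ for $\lambda\ge 0$ and $=C_{\lambda p}^\#$ for $\lambda<0$) together with Theorems \ref{LVtheorem} and \ref{interiorLVtheorem}. The only mild subtlety, which I would flag for the reader, is that the interchange of summations is legitimate because each fixed lattice point $\mu$ is contributed to by only finitely many vertex cones, so no convergence issue arises despite the generating series themselves being formal power series that are not absolutely convergent as functions of $x$. Thus the proof reduces to the three-line computation above.
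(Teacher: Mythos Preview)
Your proposal is correct and is exactly the argument the paper intends: the paper does not write out a proof but simply says ``Similarly, we have:'' after Proposition~\ref{LVconesGenerating}, indicating one should repeat that three-line computation with $\widecheck{K}_p^\#$ in place of $K_p^\#$ and invoke \eqref{LVconesdual} instead of \eqref{LVcones}. Your remark about the interchange of summations being justified by finiteness of $\mathrm{Vert}(P)$ is a welcome clarification that the paper leaves implicit.
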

These results can be slightly generalised as follows. Let $W^*$ be a vector space and let $P\subset W^*$ be a simple convex polytope sitting on a hyperplane
		\begin{equation*}
		\mathcal	H=\left\{\eta\in W^*\ | \ \left<\eta,R\right>=1\right\}
		\end{equation*}
		determined by a vector $R\in W$.
		Let
		\begin{equation*}
			C=\{r\eta\in W^* \ | \ \eta\in P, r\geq 0\}
		\end{equation*}
		be the cone over $P$ and for each vertex $p\in P$ denote by $\mu_p$ the primitive edge vector of $C$ going through $p$.
		Let $\xi\in H^*_P$ be a polarizing vector for $P$. As above, for each vertex $p\in P$ denote by $C_p^\#\subset H$ the polarized tangent cone of $P$ at $p$ and by $\widecheck{C}_p^\#\subset H$ the dual polarized tangent cone of $P$ at $p$. 
		\begin{definition}
			\label{cones_definitions}
			Define the cones $K_p^\#$ and $\widecheck{K}_p^\#$ by
			\begin{equation*}
				K_p^\#=C_p^\#+\R\mu_p\;\;\text{and}\;\; \widecheck{K}_p^\#=\widecheck{C}_p^\#+\R\mu_p.
			\end{equation*}
		\end{definition}
		Let $\{e_1,\ldots,e_{n+1}\}$ be a basis of $W$ such that $e_{n+1}=R$. Then $\{e_1^*,\ldots,e_n^*\}$ is a basis of $\mathcal H$ and we have a linear isomorphism
		\begin{align*}
			T\colon W^*&\to\mathcal  H\times \R^*\\
			e_i^*&\mapsto (e_i^*,0)\\
			e_{n+1}^*&\mapsto \left(0,1^*\right).
		\end{align*}
		The map $T$ takes $\mathcal  H$ to the hyperplane
		\begin{equation*}
			T(\mathcal H)=\left\{\left(\eta,1^*\right)\in \mathcal H\times \R^*\ | \ \eta\in \mathcal  H\right\}
		\end{equation*}
		and $P$ to a polytope $T(P)\subset T(\mathcal  H)$. 
		Restricting $T$ to $\mathcal H$, we get a linear automorphism $T\colon\mathcal  H\to\mathcal  H$ that we will also denote by $T$. Let $T^{-1}$ be the inverse of $T$ and $(T^{-1})^*$ its adjoint. Let $v\in \mathcal H^*$ and $\eta\in \mathcal H$, then
		\begin{equation*}
			\left<\eta,v\right>=\left<T(\eta),(T^{-1})^*(v)\right>.
		\end{equation*}
		Since the edges of $P$ are taken to the edges of $T(P)$, the vector $(T^{-1})^*(\xi)$ induces a polarization of $T(P)$ such that $T(C_p^\#)=C_{T(p)}^\#$ for every vertex $p\in P$. The identity \eqref{LVcones} implies that
		\begin{equation*}
			\sum_{p\in Vert(P)} (-1)^{\left|E_p^+(\xi)\right|} \mathbf{1}_{K_{T(p)}^\#}(x)=\mathbf{1}_{T(C)}(x)+(-1)^n\mathbf{1}_{(-{T(C)}^\circ)}(x).
		\end{equation*}
Let $S(x)=\sum_{p\in Vert(P)} (-1)^{\left|E_p^+(\xi)\right|} \mathbf{1}_{K_{p}^\#}(x).$
		Since $K_{T(p)}^\#=T(K_{p}^\#)$, we have $\mathbf{1}_{K_{p}^\#}(x)=\mathbf{1}_{T(K_{p}^\#)}(Tx)$ and therefore
		\begin{align*}
			S(x)&=\sum_{p\in Vert(P)} (-1)^{\left|E_p^+(\xi)\right|} \mathbf{1}_{K_{p}^\#}(x)=\sum_{p\in Vert(P)} (-1)^{\left|E_p^+(\xi)\right|} \mathbf{1}_{T(K_{p}^\#)}(Tx)\\
			&=\mathbf{1}_{T(C)}(Tx)+(-1)^n\mathbf{1}_{(-{T(C)}^\circ)}(Tx)\\
			&=\mathbf{1}_{C}(x)+(-1)^n\mathbf{1}_{(-C^\circ)}(x),
		\end{align*}
		In summary, we have:
		\begin{proposition}
			Let $W^*$ be a vector space and $P\subset W^*$ a simple convex polytope on a hyperplane $H$ determined by a vector $R\in W$. Then, given a polarizing vector $\xi\in H^*_P$, we have
			\begin{equation*}
				S(x)=\sum_{p\in Vert(P)} (-1)^{\left|E_p^+(\xi)\right|} \mathbf{1}_{K_{p}^\#}(x)=\mathbf{1}_{C}(x)+(-1)^n\mathbf{1}_{(-C^\circ)}(x).
			\end{equation*}
		\end{proposition}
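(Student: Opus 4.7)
The plan is to reduce the general statement to the special case already established in Section \ref{LVconesSECTION}, where the polytope lies on the characteristic hyperplane $\{(\eta,\tfrac{1}{2}^*)\}\subset V^*\times\R^*$. The reduction is via the linear isomorphism $T\colon W^*\to\mathcal H\times\R^*$ constructed in the paragraph preceding the proposition, obtained by choosing a basis $\{e_1,\ldots,e_n,R\}$ of $W$ with $e_{n+1}=R$ and setting $T(e_i^*)=(e_i^*,0)$ for $i\le n$ and $T(e_{n+1}^*)=(0,1^*)$.

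First, I would verify that $T$ transports all the relevant combinatorial and geometric data: it sends $\mathcal H$ to $\{(\eta,1^*):\eta\in\mathcal H\}$, the polytope $P$ to a simple convex polytope $T(P)$ of the same combinatorial type on that hyperplane, and the cone $C$ to the cone over $T(P)$ in $\mathcal H\times\R^*$. Since $T$ is a linear isomorphism, vertices go to vertices, edges go to edges, and each primitive edge vector $\mu_p$ of $C$ is carried to a positive multiple of $\mu_{T(p)}$. Using the adjoint identity $\langle\eta,v\rangle=\langle T\eta,(T^{-1})^*v\rangle$, the pullback $(T^{-1})^*(\xi)$ is a polarizing vector for $T(P)$ which pairs with the edge vectors of $T(P)$ with the same signs as $\xi$ pairs with the edge vectors of $P$. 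Consequently $T(C_p^\#)=C_{T(p)}^\#$ and $T(K_p^\#)=K_{T(p)}^\#$ for every vertex $p\in\mathrm{Vert}(P)$, and $|E_p^+(\xi)|=|E_{T(p)}^+((T^{-1})^*\xi)|$.

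Next, I apply the identity \eqref{LVcones} proven in Section \ref{LVconesSECTION} to the polytope $T(P)$, its cone $T(C)$ and the polarizing vector $(T^{-1})^*(\xi)$, obtaining
\begin{equation*}
\sum_{p\in\mathrm{Vert}(P)} (-1)^{|E_p^+(\xi)|}\,\mathbf{1}_{K_{T(p)}^\#}(y)=\mathbf{1}_{T(C)}(y)+(-1)^n\,\mathbf{1}_{-T(C)^\circ}(y)
\end{equation*}
for all $y\in\mathcal H\times\R^*$. Setting $y=Tx$ and using the elementary identity $\mathbf{1}_{T(A)}(Tx)=\mathbf{1}_A(x)$ for any subset $A\subset W^*$, together with $T(K_p^\#)=K_{T(p)}^\#$ and $T(-C^\circ)=-T(C)^\circ$, one pulls the identity back to $W^*$ and arrives at
\begin{equation*}
S(x)=\mathbf{1}_C(x)+(-1)^n\,\mathbf{1}_{-C^\circ}(x),
\end{equation*}
as claimed.

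The only real obstacle is careful bookkeeping: one has to track how the polarizing vector transforms (hence $(T^{-1})^*\xi$ rather than $\xi$ on the $T(P)$ side) and confirm that the sign count $|E_p^+(\xi)|$ is preserved, but both facts follow immediately from the adjoint identity. No lattice hypothesis is needed since \eqref{LVcones} is an equality of characteristic functions; if one later wants a generating-series version analogous to Proposition \ref{LVconesGenerating}, an additional compatibility between $T$ and the relevant integral lattices would be required.
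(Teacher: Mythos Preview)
Your proposal is correct and follows essentially the same approach as the paper: both reduce to the previously established identity \eqref{LVcones} via the linear isomorphism $T\colon W^*\to\mathcal H\times\R^*$, transport the polarizing vector to $(T^{-1})^*\xi$, observe $T(K_p^\#)=K_{T(p)}^\#$, and pull back the characteristic-function identity through $T$. The only cosmetic difference is that the paper writes the target hyperplane at height $1^*$ rather than $\tfrac12^*$, but this does not affect the argument.
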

		A similar argument applied to the dual polarized tangent cones $\widecheck{K}_p^\#=\widecheck{C}_p^\#+\R\mu_p$ gives:
		\begin{proposition}
			\begin{equation*}
				\widecheck{S}(x)=\sum_{p\in Vert(P)} (-1)^{\left|E_p^+(\xi)\right|} \mathbf{1}_{\widecheck{K}_p^\#}(x)=(-1)^n\mathbf{1}_{C^\circ}(x)+\mathbf{1}_{-C}(x).
			\end{equation*}
		\end{proposition}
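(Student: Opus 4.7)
The plan is to mimic exactly the argument just carried out for $S(x)$, transferring the already-established identity \eqref{LVconesdual} through the linear isomorphism $T\colon W^*\to \mathcal{H}\times \R^*$ constructed immediately above. The content is therefore purely formal once one checks that $T$ intertwines all the relevant geometric constructions.

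First I would record the compatibilities of $T$ with the cones in Definition \ref{cones_definitions}. Because $T$ is a linear isomorphism sending the polytope $P$ to the polytope $T(P)\subset T(\mathcal{H})$ and taking edges to edges, the polarizing vector $(T^{-1})^*(\xi)\in T(\mathcal{H})^*$ polarizes $T(P)$ and satisfies $|E_{T(p)}^+((T^{-1})^*\xi)|=|E_p^+(\xi)|$ for every vertex $p$. Moreover $T$ takes the dual polarized tangent cone of $P$ at $p$ to the dual polarized tangent cone of $T(P)$ at $T(p)$, and sends the primitive edge vector $\mu_p$ of $C$ through $p$ to the primitive edge vector $\mu_{T(p)}$ of $T(C)$ through $T(p)$; hence $T(\widecheck{K}_p^\#)=\widecheck{K}_{T(p)}^\#$. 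Finally $T(C^\circ)=T(C)^\circ$ and $T(-C)=-T(C)$.

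Next I would invoke \eqref{LVconesdual} applied to the cone $T(C)$ over the polytope $T(P)\subset T(\mathcal{H})$, using the polarizing vector $(T^{-1})^*\xi$. This yields
\begin{equation*}
\sum_{p\in \mathrm{Vert}(P)}(-1)^{|E_p^+(\xi)|}\,\mathbf{1}_{\widecheck{K}_{T(p)}^\#}(y)
=(-1)^n\mathbf{1}_{T(C)^\circ}(y)+\mathbf{1}_{-T(C)}(y)
\end{equation*}
for every $y\in \mathcal{H}\times \R^*$.

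Finally I would set $y=Tx$ and use $\mathbf{1}_{\widecheck{K}_p^\#}(x)=\mathbf{1}_{T(\widecheck{K}_p^\#)}(Tx)=\mathbf{1}_{\widecheck{K}_{T(p)}^\#}(Tx)$, together with the identities $\mathbf{1}_{T(C)^\circ}(Tx)=\mathbf{1}_{C^\circ}(x)$ and $\mathbf{1}_{-T(C)}(Tx)=\mathbf{1}_{-C}(x)$, to conclude
\begin{equation*}
\widecheck{S}(x)=(-1)^n\mathbf{1}_{C^\circ}(x)+\mathbf{1}_{-C}(x),
\end{equation*}
as desired. There is no real obstacle here; the only point that deserves a second look is the compatibility $T(\widecheck{K}_p^\#)=\widecheck{K}_{T(p)}^\#$, which is immediate from linearity but needs the observation that the $\R\mu_p$ summand is sent to $\R\mu_{T(p)}$ because $T$ maps the transverse direction to the hyperplane $\mathcal{H}$ inside $W^*$ to the vertical direction $\R^*$ inside $\mathcal{H}\times\R^*$ in a manner consistent with the edges of $C$.
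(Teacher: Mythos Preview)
Your proposal is correct and follows exactly the route the paper indicates: the paper's own proof of this proposition is the single sentence ``A similar argument applied to the dual polarized tangent cones $\widecheck{K}_p^\#=\widecheck{C}_p^\#+\R\mu_p$ gives,'' referring back to the computation for $S(x)$ via the linear isomorphism $T$, and you have simply written out that similar argument in full. One minor remark: $T$ need not send the \emph{primitive} vector $\mu_p$ to the primitive vector $\mu_{T(p)}$, but as you observe at the end, only the equality of lines $T(\R\mu_p)=\R\mu_{T(p)}$ is needed for the characteristic-function identity, and that is clear.
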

		The  identities involving generating series   in Proposition \ref{LVconesGenerating} and  \ref{LVconesdual2} continue to hold, with the lattice $\Z_W^*$ in place of $\Z_V^*$.

	\subsection{A formula for \texorpdfstring{$\ind_G^M(\sigma(\dolbeault_H))$}{}}
		Applying the results in the previous section, we show next how to obtain explicitly the multiplicities $m(\mu)$ associated to the weights $\mu\in \fg^*$ appearing in the index
		\begin{equation*}
			\label{index_multiplicities}
			\ind_G^M(\sigma(\dolbeault_H))(t)=\sum_{\mu \in \Z_G^*} m(\mu)t^\mu.
		\end{equation*}
Let $R\in\fg$ be the generator of the Reeb vector field, $\mathcal H$ the characteristic hyperplane determined by $R$ and $C$ the moment cone. The polytope $P=\mathcal H\cap C$ is the image of the $\alpha$-moment map $\phi_\alpha$.
		Each vertex of $P$ corresponds to an edge of $C$, corresponding to a connected component $L$ of $M_n$. 

		Given a vertex $p\in P$, let $L\subset M_n$ be the closed Reeb orbit corresponding to $p$. Since $C$ is a good cone there is a vector $v_0^L\in \fg$ such that $\{v_0^L,v_1^l,\ldots,v_n^L\}$ is an integral basis of $\Z_G$, where $\{v_1^l,\ldots,v_n^L\}$ is the set of normals to faces meeting at $p$. Let $\{\mu_L,w^1_L,\ldots,w^n_L\}$ be the dual basis of $\{v_0^L,v_1^l,\ldots,v_n^L\}$, Theorem \ref{localizationformulaIndex} tells us that if $\epsilon\in\fg$ is a polarizing vector, as in Definition \ref{polarizingvector}, then the index $\ind_G^M(\sigma(\dolbeault_H))$ is given by
		\begin{equation*}
			\ind_G^M \sigma(\dolbeault_H)(t)=\sum_{L\subset M_n} \left(\frac{1}{1-t^{-{w}^1_L}}\right)^{s^1_L}\cdots\left(\frac{1}{1-t^{-{w}_L^n}}\right)^{s^n_L}\delta(1-t^{\mu_L}),
		\end{equation*}
		where $s^i_L=+$ if $(\epsilon^\perp_L)>0$ and $s^i_L=-$ if $(\epsilon^\perp_L)<0$.
	    	Define the index sets
	    	\begin{equation*}
	    		W_L^+(\epsilon^\perp_L)=\{i\in \{1,\ldots, n\} \ | \ w_L^i(\epsilon^\perp_L)> 0\}
	    	\end{equation*}
	    	and
	    	\begin{equation*}
	    		W_L^-(\epsilon^\perp_L)=\{i\in \{1,\ldots, n\} \ | \ w_L^i(\epsilon^\perp_L)< 0\}.
	    	\end{equation*}
	    We can write
	    \begin{equation*}
	    	\left(\frac{1}{1-t^{-{w}^1_L}}\right)^{s^1_L}\cdots\left(\frac{1}{1-t^{-{w}_L^n}}\right)^{s^n_L}\delta(1-t^{\mu_L})=(-1)^{\left| W^+_L(\epsilon^\perp_L)\right|} \sum_{\mu\in \Z_G^*\cap K_L^\#(\epsilon^\perp)}t^\mu,
	    \end{equation*}
	    since $\{\mu_L,w^1_L,\ldots,w^n_L\}$ is an integral basis of $\Z_G^*$, and $K_L^\#(\epsilon^\perp)$ is the cone defined by
	    \begin{equation*}
	    	K_L^\#(\epsilon^\perp)=\R \mu_L+\sum_{i\in W_L^+(\epsilon^\perp_L)} \R_{>0} w^i_L+\sum_{i\in W_L^-(\epsilon^\perp_L)} \R_{\leq 0} w^i_L.
	    \end{equation*}
	    Since $\{\mu_L,w^1_L,\ldots,w^n_L\}$ is the dual basis of $\{v_0^L,\ldots,v_n^L\}$, the cone $K_L^\#(\epsilon^\perp)$ can also be written as
	    \begin{equation*}
	    	K_L^\#(\epsilon^\perp)=\bigcap_{i\in W_L^+(\epsilon^\perp_L)}\{w\in\fg^*\ | \  w(v_i^L) > 0 \}\bigcap_{i\in W_L^-(\epsilon^\perp_L)}\{w\in\fg^*\ | \  w(v_i^L)\leq 0 \}.
	    \end{equation*}
	    The following lemma gives yet another description of the cone $K_L^\#(\epsilon^\perp)$.

	    \begin{lemma}
	    \label{conedescription}
	    	Let $\{\eta^1,\ldots,\eta^n\}\subset\fg^*$ be a set of vectors satisfying $\eta^i(v_j^L)=\delta_{ij}$, $i,j=1,\ldots,n$ and let $K$ be the cone
	    	\begin{equation*}
	    		K=\R \mu_L+\sum_{i\in W_L^+(\epsilon^\perp_L)} \R_{> 0} \eta^i+\sum_{i\in W_L^-(\epsilon^\perp_L)} \R_{\leq 0} \eta^i.
	    	\end{equation*}
	    	Then $K=K_L^\#(\epsilon^\perp)$.
	    \end{lemma}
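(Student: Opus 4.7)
The plan is to use the fact that $\{\mu_L, w_L^1, \ldots, w_L^n\}$ is the basis of $\fg^*$ dual to $\{v_0^L, v_1^L, \ldots, v_n^L\}$, and that the $\eta^i$ satisfy \emph{precisely} the pairing relations with the normals $v_1^L, \ldots, v_n^L$ that characterise $w_L^i$, without any constraint against $v_0^L$. So the first step is the identification
\begin{equation*}
\eta^i = w_L^i + d_i \mu_L \quad \text{for some } d_i \in \R,\quad i = 1, \ldots, n.
\end{equation*}
Indeed, since $(\eta^i - w_L^i)(v_j^L) = \delta_{ij} - \delta_{ij} = 0$ for all $j = 1, \ldots, n$, the covector $\eta^i - w_L^i$ annihilates the span of $\{v_1^L, \ldots, v_n^L\}$; as $\mu_L$ spans the annihilator of this hyperplane inside $\fg^*$, the claim follows.

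Once this is in place, the proof reduces to a direct manipulation of cones. Substituting $\eta^i = w_L^i + d_i \mu_L$ into the definition of $K$ gives
\begin{equation*}
K = \R \mu_L + \sum_{i \in W_L^+(\epsilon_L^\perp)} \R_{>0}\bigl(w_L^i + d_i \mu_L\bigr) + \sum_{i \in W_L^-(\epsilon_L^\perp)} \R_{\leq 0}\bigl(w_L^i + d_i \mu_L\bigr).
\end{equation*}
The key observation is that the $d_i \mu_L$ contributions are absorbed by the linear factor $\R \mu_L$ already present: for any real number $r > 0$ (respectively $r \leq 0$) and any $s \in \R$, the combination $r w_L^i + (s + r d_i)\mu_L$ ranges over $\R_{>0} w_L^i + \R \mu_L$ (respectively $\R_{\leq 0} w_L^i + \R \mu_L$) as $r$ and $s$ vary. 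Applying this simplification to each summand yields
\begin{equation*}
K = \R \mu_L + \sum_{i \in W_L^+(\epsilon_L^\perp)} \R_{>0} w_L^i + \sum_{i \in W_L^-(\epsilon_L^\perp)} \R_{\leq 0} w_L^i = K_L^\#(\epsilon^\perp).
\end{equation*}

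There is no real obstacle here; the only point to be careful about is the asymmetric handling of the strict inequality $r > 0$ versus the non-strict $r \leq 0$, which must be preserved when absorbing multiples of $\mu_L$. This is automatic because the added term $\R \mu_L$ is a full linear subspace, so the positivity/non-positivity of the coefficient of $w_L^i$ is untouched by the substitution. Thus the lemma reduces, after the identification of $\eta^i$ with $w_L^i$ modulo $\R\mu_L$, to the elementary fact that adding a linear subspace to a sum of half-lines absorbs any translates of that subspace appearing among the generators.
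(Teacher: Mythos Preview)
Your proof is correct and takes a genuinely different route from the paper's. The paper first rewrites $K_L^\#(\epsilon^\perp)$ as an intersection of half-spaces,
\[
K_L^\#(\epsilon^\perp)=\bigcap_{i\in W_L^+(\epsilon^\perp_L)}\{w\in\fg^*\mid w(v_i^L)>0\}\ \cap\ \bigcap_{i\in W_L^-(\epsilon^\perp_L)}\{w\in\fg^*\mid w(v_i^L)\leq 0\},
\]
and then argues by double inclusion: evaluating a generic element of $K$ on the normals $v_i^L$ gives the required sign conditions, and conversely expanding any $w\in K_L^\#(\epsilon^\perp)$ in the basis $\{\mu_L,\eta^1,\ldots,\eta^n\}$ and pairing with the $v_i^L$ forces the coefficients to have the correct signs. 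Your argument instead identifies $\eta^i=w_L^i+d_i\mu_L$ directly from the annihilator description of $\R\mu_L$, and then absorbs the $d_i\mu_L$ shifts into the linear summand $\R\mu_L$. This bypasses the half-space description entirely and is arguably more transparent; the paper's approach, on the other hand, makes the half-space presentation of $K_L^\#(\epsilon^\perp)$ explicit along the way, which is a slightly stronger statement even if it is not used again.
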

	    \begin{proof}
	    	Let $w\in K$ and write
	    	\begin{equation*}
	    		w=r\mu_L+\sum_{i\in W_L^+(\epsilon^\perp_L)} a_i \eta_L^i+\sum_{i\in W_L^-(\epsilon^\perp_L)} b_i \eta_L^i,
	    	\end{equation*}
	    	where $r, a_i, b_i\in \R$, $a_i> 0$ and $b_i\leq 0$, $i=1,\ldots, n$.
	    	Computing $w(v_i^L)$ for $i=1,\ldots,n$ we get
	    	\begin{equation*}
	    		w(v_i^L)=a_i> 0,\text{ for } i\in W_L^+(\epsilon^\perp_L)\text{ and }w(v_i^L)=b_i\leq 0\text{ for }i\in W_L^-(\epsilon^\perp_L).
	    	\end{equation*}
	    	Therefore 
	    	\begin{equation*}
	    		K\subset\bigcap_{i\in W_L^+(\epsilon^\perp_L)}\{w\in\fg^*\ | \ w(v_i^L)> 0 \}\bigcap_{i\in W_L^-(\epsilon^\perp_L)}\{w\in\fg^*\ | \ w(v_i^L)\leq 0 \}=K_L^\#(\epsilon^\perp).
	    	\end{equation*}

	    	Let $w\in K_L^\#(\epsilon^\perp)$, since $\{\mu_L,\eta^i,\ldots,\eta^n\}$ forms a basis for $\fg^*$ we can write
	    	\begin{equation*}
	    		w=r\mu_L+\sum_{i\in W_L^+(\epsilon^\perp_L)} a_i \eta_L^i+\sum_{i\in W_L^-(\epsilon^\perp_L)} b_i \eta_L^i.
	    	\end{equation*}
	    	Since $w\in K_L^\#(\epsilon^\perp)$, computing $w(v_i^L)$, $i=1,\ldots,n$, we find that $a_i> 0$ and $b_i\leq 0$ which implies that $w\in K$.
	    	Therefore $K=K_L^\#(\epsilon^\perp)$.
	    \end{proof}

	    Let $\{\eta_L,\eta_L^1,\ldots,\eta_L^n\}$ be the dual basis of $\{R,v_1^L,\ldots,v_n^L\}$, then $\eta_L^i(v_j^L)=\delta_{ij}$ and Lemma \ref{conedescription} implies that
	    \begin{equation*}
	    	K^\#_L(\epsilon^\perp)=\R \mu_L+\sum_{i\in W_L^+(\epsilon^\perp_L)} \R_{> 0} \eta^i_L+\sum_{i\in W_L^-(\epsilon^\perp_L)} \R_{\leq 0} \eta^i_L.
	    \end{equation*}
The index sets $W_L^+(\epsilon^\perp_L)$ and $W_L^-(\epsilon^\perp_L)$ can also be expressed in terms of the   basis $\{\eta_L,\eta_L^1,\ldots,\eta_L^n\}$.
	    \begin{lemma}
	    \label{signcollection}
	    	\begin{equation*}
	    		W_L^+(\epsilon^\perp_L)=\{i\in \{1,\ldots, n\}\ | \ w_L^i(\epsilon^\perp_L)> 0\}=\{i\in \{1,\ldots, n\}\ | \  \eta_L^i(\epsilon)> 0\}
	    	\end{equation*}
	    	and
	    	\begin{equation*}
	    		W_L^-(\epsilon^\perp_L)=\{i\in \{1,\ldots, n\}\ | \  w_L^i(\epsilon^\perp_L)< 0\}=\{i\in \{1,\ldots, n\}\ | \  \eta_L^i(\epsilon)< 0\}.
	    	\end{equation*}
	    \end{lemma}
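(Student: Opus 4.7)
The proof will be a short direct computation, essentially unwrapping the definitions of the two dual bases and observing that they agree on $\epsilon_L^\perp$.

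The plan is as follows. First I would recall the explicit description
\begin{equation*}
	\epsilon_L^\perp = \eta_L^1(\epsilon)\, v_1^L + \cdots + \eta_L^n(\epsilon)\, v_n^L,
\end{equation*}
which comes from expanding $\epsilon\in\fg$ in the basis $\{R, v_1^L, \ldots, v_n^L\}$ of $\fg$ dual to $\{\eta_L, \eta_L^1, \ldots, \eta_L^n\}$ and dropping the Reeb component.

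Next I would apply $w_L^i$ to both sides and use the fact that $\{\mu_L, w_L^1, \ldots, w_L^n\}$ is the basis of $\fg^*$ dual to the integral basis $\{v_0^L, v_1^L, \ldots, v_n^L\}$ of $\fg$, which gives $w_L^i(v_j^L) = \delta_{ij}$ for $j = 1, \ldots, n$. The calculation is then
\begin{equation*}
	w_L^i(\epsilon_L^\perp) \;=\; \sum_{j=1}^n \eta_L^j(\epsilon)\, w_L^i(v_j^L) \;=\; \sum_{j=1}^n \eta_L^j(\epsilon)\,\delta_{ij} \;=\; \eta_L^i(\epsilon).
\end{equation*}
The two characterizations of $W_L^\pm(\epsilon_L^\perp)$ are then identical, and both identities in the lemma follow at once.

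There is really no obstacle in this argument; the only thing to be careful about is keeping track of which basis is dual to which. The two bases $\{\mu_L, w_L^1, \ldots, w_L^n\}$ and $\{\eta_L, \eta_L^1, \ldots, \eta_L^n\}$ differ because the first is dual to the integral completion $\{v_0^L, v_1^L, \ldots, v_n^L\}$ while the second is dual to $\{R, v_1^L, \ldots, v_n^L\}$, so they only coincide on the span of $v_1^L, \ldots, v_n^L$. The content of the lemma is precisely that $\epsilon_L^\perp$ lies in that span, which is built into its definition, so the identification $w_L^i(\epsilon_L^\perp) = \eta_L^i(\epsilon)$ is immediate.
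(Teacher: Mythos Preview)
Your proof is correct and is essentially the same as the paper's: both expand $\epsilon_L^\perp = \sum_j \eta_L^j(\epsilon)\, v_j^L$ and pair with $w_L^i$ using $w_L^i(v_j^L)=\delta_{ij}$ to obtain $w_L^i(\epsilon_L^\perp)=\eta_L^i(\epsilon)$. The paper phrases the duality via the isotropy weights $\alpha_i\in\mathfrak g_L^*$ (of which the $w_L^i$ are the extensions to $\mathfrak g^*$), but the computation is identical.
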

	    \begin{proof}
	    	Since the weights $\{\alpha_1,\ldots,\alpha_n\}$ of the $G_L$-action on $\C^n$ form a dual basis to $\{v_1^L,\ldots,v_n^L\}$ in $\fg^*_L$ and
	    	\begin{equation*}
	    		\epsilon^\perp_L=\eta_L^1(\epsilon)v_1^L+\cdots+\eta_L^n(\epsilon)v_n^L,
	    	\end{equation*}
	    	we have $\alpha_i(\epsilon^\perp_L)=w^i_L(\epsilon^\perp_L)=\eta_L^i(\epsilon)$, for $i=1,\ldots,n$.
	    \end{proof}

		\begin{theorem}
		\label{index_formula}
The index of the horizontal Dolbeault operator $\dolbeault_H$ is given by
			\begin{equation*}
			\label{index_formula1}
				\ind^M_G(\sigma(\dolbeault_H))(t)=(-1)^n\sum_{\mu\in C^\circ \cap \Z_G^*} t^\mu+\sum_{\mu\in (-C)\cap \Z_G^*} t^\mu.
			\end{equation*}
		\end{theorem}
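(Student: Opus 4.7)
The plan is to reduce the claim to the dual cone Lawrence--Varchenko identity in Proposition \ref{LVconesdual2}. Start from the localisation formula of Theorem \ref{localizationformulaIndex} specialised to the trivial bundle $E = M \times \C$ (so $\chi_{E|_L}(t) \equiv 1$), giving
$$\ind^M_G(\sigma(\dolbeault_H))(t) = \sum_{L \subset M_n} \prod_{i=1}^n\left(\frac{1}{1-t^{-w_L^i}}\right)^{s_L^i} \delta(1 - t^{\mu_L}).$$

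The first step is to expand each summand as a sum over lattice points. Using Definition \ref{expansions} to expand $(1 - t^{-w_L^i})^{-1}$ in positive or negative powers according to the sign $s_L^i$, combined with $\delta(1 - t^{\mu_L}) = \sum_{k \in \Z} t^{k\mu_L}$ and the fact that $\{\mu_L, w_L^1, \ldots, w_L^n\}$ is an integral basis of $\Z_G^*$, one obtains
$$\prod_{i=1}^n \left(\frac{1}{1-t^{-w_L^i}}\right)^{s_L^i} \delta(1-t^{\mu_L}) = (-1)^{|W_L^+(\epsilon_L^\perp)|} \sum_{\mu \in \Z_G^* \cap K_L^\#(\epsilon^\perp)} t^\mu,$$
where $K_L^\#(\epsilon^\perp) = \R\mu_L + \sum_{i \in W_L^+} \R_{>0}\, w_L^i + \sum_{i \in W_L^-} \R_{\leq 0}\, w_L^i$ is the cone already isolated in the discussion preceding Lemma \ref{conedescription}.

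The second step is to identify this sum over edges of $C$ with the left-hand side of Proposition \ref{LVconesdual2} applied to the polytope $P = \mathcal H \cap C$. The vertices of $P$ are in bijection with the edges of $C$, hence with closed Reeb orbits $L \subset M_n$. By Lemma \ref{conedescription}, $K_L^\#(\epsilon^\perp)$ can equally well be written in terms of the basis $\{\eta_L, \eta_L^1, \ldots, \eta_L^n\} \subset \fg^*$ dual to $\{R, v_1^L, \ldots, v_n^L\}$; since $\eta_L^i(R) = 0$, the vectors $\eta_L^i$ are tangent to $\mathcal H$ and generate the $n$ edges of $P$ at the vertex $p$ associated to $L$. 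Viewing $\epsilon \in \fg$ (via restriction to $\mathcal H^*$) as a polarising vector for $P$, Lemma \ref{signcollection} identifies $(-1)^{|W_L^+(\epsilon_L^\perp)|}$ with $(-1)^{|E_p^+(\xi)|}$ and identifies $K_L^\#(\epsilon^\perp)$ with the dual polarised cone $\widecheck{K}_p^\#$ of Section \ref{LVconesSECTION}. The hyperplane version of Proposition \ref{LVconesdual2} recorded at the end of that section then supplies
$$\sum_{L} (-1)^{|W_L^+(\epsilon_L^\perp)|} \sum_{\mu \in \Z_G^* \cap K_L^\#(\epsilon^\perp)} t^\mu = (-1)^n \sum_{\mu \in C^\circ \cap \Z_G^*} t^\mu + \sum_{\mu \in (-C) \cap \Z_G^*} t^\mu,$$
which is the claim.

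The main obstacle is the identification in the last paragraph. One must check, in particular, that the $\eta_L^i$ span the edges of $P$ at $p$ (rather than, say, the rays of $C$ at the ray through $p$), that a polarising vector $\epsilon$ in the sense of Definition \ref{polarizingvector} descends to a polarising vector for $P$ in the sense of Section \ref{LVconesSECTION}, and crucially that the paper's cone $K_L^\#(\epsilon^\perp)$—using $\R_{>0}$ on the positive side and $\R_{\leq 0}$ on the negative side—coincides with the \emph{dual} polarised cone $\widecheck{K}_p^\#$ and not $K_p^\#$, which is what matches the sign pattern $(-1)^n C^\circ + (-C)$ on the right-hand side of the theorem.
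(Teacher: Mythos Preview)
Your proposal is correct and follows essentially the same route as the paper's proof: start from the localisation formula, expand each summand as a lattice sum over the cone $K_L^\#(\epsilon^\perp)$, use Lemmas \ref{conedescription} and \ref{signcollection} to identify this cone with the dual polarised cone $\widecheck{K}_p^\#$ via the edge vectors $\eta_L^i$ and the induced polarisation $\epsilon_H$ on $\mathcal H$, and then apply the hyperplane version of Proposition \ref{LVconesdual2}. The obstacles you flag---that the $\eta_L^i$ are the edge directions of $P$ at $p$, that $\epsilon$ descends to a polarising vector for $P$, and that the sign conventions force $K_L^\#(\epsilon^\perp)=\widecheck{K}_p^\#$ rather than $K_p^\#$---are precisely the verifications the paper carries out.
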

		\begin{proof} We note that  $\{\eta_L^1,\ldots,\eta_L^n\}$ are primitive vectors determining the edge directions of $P$ at $p$, since $\eta^i_L(R)=0$ for $i=1,\ldots,n$ and $\eta_L^i(v_j^L)=\delta_{ij}$. Let $\epsilon\in\fg$ be a polarizing vector, as in Proposition \ref{polarizingvector}, that is, $\epsilon$ satisfies $\eta_L^i(\epsilon)\neq 0$ for $i=1,\ldots,n$ and for all $L\subset M_n$.
		A polarizing vector for the polytope $P$ is a vector in the dual vector space $\mathcal H^*$. The vector $\epsilon\in \fg$ determines a polarizing vector $\epsilon_H$ for the polytope $P$ by
		\begin{equation*}
			\epsilon_H(\eta)=\eta(\epsilon), \text{ for all $\eta\in \mathcal H$}.
		\end{equation*}
		Since the edge vectors $\{\eta_L^1,\ldots,\eta_L^n\}$ satisfy $\epsilon_H(\eta_L^i)=\eta_L^i(\epsilon)\neq 0$, $i=1,\ldots,n$, $\epsilon_H$ is a polarizing vector for the polytope $P$. Since $\eta_L^i(\epsilon)=\epsilon_H(\eta_L^i)$, Lemma \ref{signcollection} implies that
		\begin{equation*}
	    	W_L^-(\epsilon^\perp_L)=\{i\in \{1,\ldots, n\} \ | \  \eta_L^i(\epsilon)< 0\}=E_p^-(\epsilon_H)
	    \end{equation*}
	    and
	    \begin{equation*}
	    	W_L^+(\epsilon^\perp_L)=\{i\in \{1,\ldots, n\} \ | \ \eta_L^i(\epsilon)> 0\}=E_p^+(\epsilon_H),
	    \end{equation*}
	    where $E_p^-(\epsilon_H)$ and $E_p^+(\epsilon_H)$ correspond to the edges $\eta_L^i$ of $P$ such that $\eta_L^i(\epsilon_H)< 0$ and $\eta_L^i(\epsilon_H)> 0$, respectively.
	    The vector $\epsilon_H$ determines a cone $\widecheck{K}_p^\#$, as in Definition \ref{cones_definitions}. Since the vectors $\eta_L^i$ are the edge vectors of $P$ meeting at $p$, the identity $\widecheck{K}_p^\#=K^\#_L(\epsilon^\perp)$ holds.
	    It follows that
	    \begin{align*}
	    	\ind_G^M \sigma(\dolbeault_H)(t)&=\sum_{L\subset M_n} \left(\frac{1}{1-t^{-{w}^1_L}}\right)^{s^1_L}\cdots\left(\frac{1}{1-t^{-{w}_L^n}}\right)^{s^n_L}\delta(1-t^{\mu_L})\\
	    	&=\sum_{L\subset M_n}(-1)^{\left| W^+_L(\epsilon^\perp_L)\right|} \sum_{\mu\in \Z_G^*\cap K_L^\#(\epsilon^\perp)}t^\mu\\&=\sum_{p\in \text{Vert(P)}}(-1)^{\left| E^+_p(\epsilon_H)\right|} \sum_{\mu\in \Z_G^*\cap \widecheck{K}_p^\#}t^\mu 
	    \end{align*}
	    and the result follows by applying Proposition \ref{LVconesdual2}.
	\end{proof}

\end{document}